\documentclass{siamltex704}
%
%
%
%
\title{Steady water waves in the presence of wind} \author{Samuel Walsh\footnotemark[3]\ \footnotemark[2]
  \and Oliver B\"uhler\footnotemark[4]\ \footnotemark[2] \and
  Jalal Shatah\footnotemark[5]\ \footnotemark[2]}

\usepackage{amssymb,stmaryrd,amsmath}
\usepackage{amscd}
 \usepackage{mathrsfs}
\usepackage[dvips]{graphicx}

\newcommand{\be}{\begin{equation} }
\newcommand{\ee}{\end{equation}}
\newcommand{\bse}{\begin{subequations}}
\newcommand{\ese}{\end{subequations}}

\newcommand{\jump}[1]{\left\llbracket{#1}\right\rrbracket}
\newcommand{\pfthm}[1]{\emph{Proof of Theorem #1. }}

\newtheorem{remark}{Remark}

\pagestyle{myheadings}
\thispagestyle{plain}
\markboth{S. WALSH, O. B\"UHLER, J. SHATAH}{STEADY WATER WAVES WITH WIND}

\usepackage{hyperref}
\hypersetup{
colorlinks=true, 
linkbordercolor={1 1 1}, 
citebordercolor={1 1 1} 
} 

\begin{document}
\maketitle

\renewcommand{\thefootnote}{\fnsymbol{footnote}}
\footnotetext[2]{Courant Institute of Mathematical Sciences, New York University, New York, N.Y. 10012}
\footnotetext[3]{walsh@\allowbreak cims.\allowbreak nyu.\allowbreak edu}
\footnotetext[4]{obuhler@\allowbreak cims.\allowbreak nyu.\allowbreak edu}
\footnotetext[5]{shatah@\allowbreak cims.\allowbreak nyu.\allowbreak edu}
\renewcommand{\thefootnote}{\arabic{footnote}}

\begin{abstract} In this paper we develop an existence theory for
  small amplitude, steady, two-dimensional 
  water waves in the presence of wind in the air above.  The presence of the wind is modeled by a Kelvin--Helmholtz type
  discontinuity across the air--water interface, and a
  corresponding jump in the circulation of the fluids there.
  We consider both fluids to be inviscid, with the water region
  being irrotational and of finite depth.  The air region is
  considered with constant vorticity in the case of infinite
  depth and with a general vorticity profile in the case of a
  finite, lidded atmosphere.
\end{abstract}

\begin{keywords} wind wave, water waves, traveling waves, bifurcation theory
\end{keywords}

\begin{AMS}\end{AMS}

\section{Introduction} \label{introductionsection}

Understanding the precise means by which the wind creates surface
waves in the ocean is both a central problem in geophysical fluid
dynamics and a famously difficult one.  From common experience,
of course, the fact that wind blowing over quiescent water will
lead to the generation of waves is fairly obvious.  Indeed, the
basic physical process appears straightforward: the jump in
tangential velocity across the air-water interface causes
instability, creating growing modes, which eventually stabilize
to become traveling waves.  Were this the case, we might hope
that the salient features of the system might be captured by the
Kelvin--Helmholtz (K--H) model (cf., e.g.,
\cite{drazin2004book}).  Intriguingly, this appears to be untrue.
Including the effects of surface tension, the K--H model predicts
that the speed of the wind must be above 660 cm/sec in order to
excite waves, which is an order of magnitude larger than what
observation suggests (cf. \cite{lamb1993hydrodynamics}).  The
discrepancy indicates that there are crucial components of the
system that has been overlooked by the K--H viewpoint.

The search for these missing features has led 
to a number of
competing models for the time-dependent wind-driven generation of
water waves with 
perhaps the most influential being that of Miles
(cf. \cite{miles1957windwaves1, miles1959windwaves2}), 
which is based on the existence of a \emph{critical layer} in the
continuous wind shear profile above the surface wave (we will
discuss some aspects of this model in
\S\ref{sec:stagn-crit-layers} below).

The authors will address the time-dependent problem in a later
work, but in the present paper we begin by consider the steady
problem.  That is, we investigate the question of existence of
traveling waves in a two phase air--water system.  We endeavor to
do this in such a way that we can view the waves as the eventual
byproduct of wind blowing over water, though we shall remain
agnostic as to how exactly that generation took place.  For us,
this means that the circulation as a function of the streamlines
should be discontinuous over the air-water interface.  Since the
circulation is conserved by the flow, this is a necessary
condition for the traveling wave to be dynamically accessible
from an initial configuration with laminar flow in the air and
water with a jump in the tangential velocity over the boundary.

In this section, we shall first describe the basic framework,
then make an informal statement of the results.  The precise
theorems will be given later in the text.

\subsection{Eulerian formulation} \label{eulerianformulationsection}

\begin{figure}[h] \setlength{\unitlength}{4.5cm} 
\centering
\includegraphics[scale=0.65]{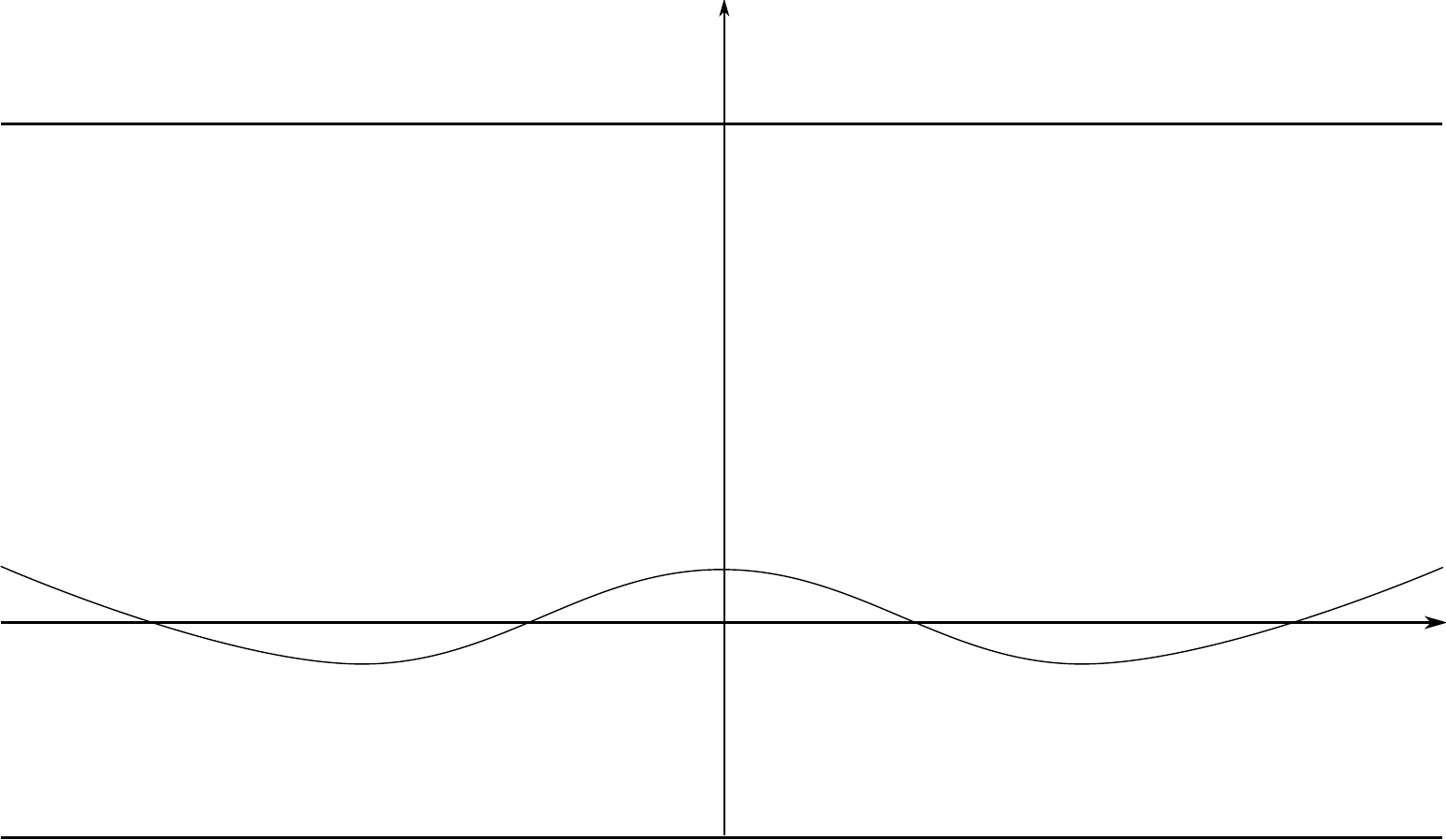}
\put(-0.2,0.50){$Y = \eta(X,t)$}
\put(-0.10,0.04){$Y = -d$}
\put(-0.10,1.20){$Y = \ell$}
\put(-0.00,0.28){$X$}
\put(-1.1,1.30){$Y$}
\put(-0.75,0.10){$\Omega_2(t)$}
\put(-0.75,0.65){$\Omega_1(t)$}
\caption{The fluid domain $\Omega(t)$.  The wave is assumed to propagate to the left with speed $c$.}
\end{figure}

Consider a two-dimensional surface wave in the ocean, with an
accompanying wave in the atmosphere above.  Fix a Cartesian
coordinate system $(X,Y)$ so that the $X$-axis points in the
direction of wave propagation, and the $Y$-axis is vertical.  We
assume that the bed of the ocean is flat and occurs at $Y = -d$,
while the interface between the wave and the atmosphere is a free
surface given by the graph of a function $\eta = \eta(X,t)$.  We
are particularly interested in the case where $\eta$ is periodic
in $X$, for fixed $t$.  While these are not physical (they have
infinite energy), they are the type typically studied in
connection with linear stability.  Not coincidentally, they are
also more amenable to mathematical analysis since there is a gain
of compactness in the $X$-direction.  We may normalize $\eta$ by
choosing the axes so that the free surface is oscillating around
the line $Y = 0$.  The atmospheric domain can be thought of as
either unbounded or bounded in $Y$.  The unbounded regime models
the situation in which the characteristic horizontal length scale
is vastly smaller than the vertical length scale.  On the other
hand, if the dynamics of the wave away from the interface are
deemed of secondary importance, a common practice is to take the
air region to be bounded above by a rigid lid at $Y = \ell$.  All
told, the fluid domain at a given time $t$ is
\[ \Omega(t) = \Omega_1(t) \cup \Omega_2(t),\]
where
\[ \Omega_1(t) :=  \{ (X,Y) \in \mathbb{R}^2 : \eta(X,t) < Y < \ell \}\]
is the air region ($\ell = +\infty$ in the unbounded case), and
\[\Omega_2(t) := \{ (X,Y) \in \mathbb{R}^2 : - d < Y < \eta(X,t) \} \]
is the water region.  The total width of the channel is thus $W
:= \ell + d$.  In what follows, we shall assume that $\ell$ is
fixed at the outset while $d$ is 
to be determined during the solution procedure.  We point out
also that we are not including the air--sea interface $\{ Y =
\eta(X,t) \}$ in the fluid domain, and thus $\Omega(t)$ is not
connected.

Let $u = u(X,Y,t)$ and $v = v(X,Y,t)$ denote the horizontal and
vertical fluid velocities, respectively.  Let $\varrho =
\varrho(X,Y,t) > 0$ be the density.  We assume that velocity
field is incompressible \be u_X + v_Y = 0, \qquad \textrm{in }
\Omega(t). \label{incompress} \ee Conservation of mass is
enforced by stipulating that the density of each fluid particle
is likewise preserved.  For an inviscid fluid, this is equivalent
to stating that the material derivative of $\varrho$ is zero: \be
\varrho_t + u \varrho_X + v \varrho_Y = 0, \qquad \textrm{in }
\Omega(t). \label{consmass1} \ee In this paper, we exclusively
consider the case where the density in the air and water regions
is constant, that is,
\[ \varrho(t,X,Y) = \rho_{\textrm{air}}
\chi_{\Omega^{(1)}(t)}(X,Y) + \rho_{\textrm{water}}
\chi_{\Omega^{(2)}(t)}(X,Y),\] where $\chi_{\Omega^{(i)}(t)}$ is
the characteristic function for the fluid region
$\Omega^{(i)}(t)$, $i = 1,2$, and $\rho_{\textrm{air}}$,
$\rho_{\textrm{water}}$ are the given densities of water and air,
respectively.  Thus \eqref{consmass1} will always be satisfied.
Note that because $\Omega(t)$ does not include the air--sea
interface, the above equation will hold in the classical
sense.  

The momentum equations for non-diffusive heterogeneous fluids are
the Euler equations, \be \left \{ \begin{array}{lll}
    \varrho u_t + \varrho u u_X + \varrho v u_Y  & = & -{P_X}{}, \\
    \varrho v_t + \varrho u v_X + \varrho v v_Y & = & -{P_Y}-
    g\varrho, \end{array} \right.  \qquad \textrm{in }
\Omega(t), \label{euler1} \ee where $P=P(X,Y,t)$ is the pressure
and $g$ is the gravitational constant.  Again, given our choice
of $\varrho$, one may alternatively view this as being satisfied
in $\Omega^{(1)}(t)$ and $\Omega^{(2)}(t)$ separately, with
$\varrho$ taking the appropriate constant value.

Let $\mathcal{I}(t) := \partial\Omega_1(t)
\cap \partial\Omega_2(t)$ denote the interface between the air
and water regions.  For simplicity, we shall use the convention
that the restriction of any quantity defined on $\Omega(t)$ or
$\Omega(t) \setminus \mathcal{I}(t)$ to the subset $\Omega_i(t)$
is denoted by a superscript $(i)$.  Thus, for example $u^{(i)} :=
u|_{\Omega_i(t)}.$ Similarly, we define the jump operator
\[ \jump{\cdot} := (\cdot)^{(1)}_{\mathcal{I}(t)} -
(\cdot)^{(2)}_{\mathcal{I}(t)}.\]

The dynamic boundary condition states that, ignoring the effects
of surface tension, the pressure must be continuous across the
interface.  Stated in terms of the operator $\jump{\cdot}$, this
is simply the statement that \be \jump{P} = 0, \qquad \textrm{on
} Y = \eta(X,t). \label{presscond} \ee

To couple the evolution of the boundary to that of the flow, we
impose a kinematic condition.  More precisely, we suppose that
$\mathcal{I}(t)$ is a material interface. This is enforced by
requiring that the normal velocity of the boundary agrees with
the normal velocity of the fluid.  Since we are assuming a graph
geometry for the free surface, we can express this quite
explicitly: \be v = \eta_t + u \eta_X, \qquad \textrm{on } Y =
\eta(X,t). \label{kincond1} \ee Similarly, in the lidded regime,
both upper and lower boundaries are unmoving, we must have \be v
= 0, \qquad \textrm{on } Y = -d. \label{bedcond} \ee \be v = 0,
\qquad \textrm{on } Y = \ell. \label{lidcond} \ee When $\Omega_2$
is unbounded, the bed condition \eqref{bedcond} remains valid,
but \eqref{lidcond} must be replaced with \be (u,v) \to
(u_\infty, 0) \textrm{ as } Y \to \infty, \textrm{ uniformly in
  $X$ and $t$, for some $u_\infty \in
  \mathbb{R}$.} \label{unliddedcond}\ee

Traveling wave solutions to \eqref{incompress}-\eqref{bedcond}
are those where, for some wave speed $c > 0$, the change of
variables
\[ x = X - ct, \qquad y = Y, \] eliminates time dependence.
Physically, this means that viewed from a frame of reference
moving with fixed speed $c$ in the direction of propagation,
$(u,v,\varrho, \eta, P)$ all appear steady.  Reusing notation,
from here on we shall simply identify $(u,v,\varrho, \eta,P)$
with their stationary profiles.  Observe that periodicity of the
traveling wave is equivalent to periodicity of the steady
quantities in the $x$-variable.  We shall therefore require that
$(u,v,\varrho, \eta,P)$ are $L$-periodic in $x$, for some $L >
0$.

In the moving frame \eqref{incompress}-\eqref{euler1} become \be
\left \{ \begin{array}{lll}
    u_x + v_y & = & 0, \\
    \varrho (u-c)u_x + \varrho v u_y  & = & -P_x ,\\
    \varrho(u-c) v_x + \varrho v v_y & = & -P_y - g
    \varrho, \end{array} \right. \qquad \textrm{in }
\Omega, \label{euler2} \ee where $\Omega$ is the steady domain.
The kinematic and dynamic conditions for the lidded problem are
likewise \be \left \{ \begin{array}{llll}
    v & = & 0, & \textrm{on } y = \ell, \\
    v & = & 0, & \textrm{on } y = -d, \\
    v & = & (u-c) \eta_x, & \textrm{on } y = \eta(x), \\
    \jump{P} & = & 0, & \textrm{on } y =
    \eta(x). \end{array} \right. \label{boundcond} \ee The
unbounded atmosphere case differs only in the condition at $y =
\infty$, where we require \be (u,v) \to (u_\infty, 0) \textrm{ as
} y \to \infty, \textrm{uniformly in $x$ for some $u_\infty \in
  \mathbb{R}$}. \label{steadyunliddedcond}\ee Recall, also, that
we have chosen our axes so that $\eta$ oscillates around the line
$y = 0$: \be \frac{1}{L}\int_{-L/2}^{L/2} \eta(x) \, dx =
0. \label{normalsurface} \ee
%

We note in passing that the steady Euler equations have an important consequence for the
Eulerian-mean momentum flux defined as
\begin{equation}
  \label{eq:1}
  F_E(y)  = \frac{1}{L}\int\limits_{-L/2}^{L/2} \rho (u-c)v \, dx.
\end{equation}
Physically, the function $F_E(y)$ is the mean upward flux of
horizontal momentum across a line of constant altitude $y$.  The
steady Euler equations imply that $dF_E/dy=0$ and hence $F_E$
does \textsl{not} depend on altitude $y$.  This
constant-momentum-flux result is also obvious from a physical
point of view: any vertical variation of $F_E$ would imply a
time-dependent accumulation of horizontal momentum within some
altitude range, which would be incompatible with the assumption
of a steady flow field.  Moreover, the boundary condition $v=0$
at the rigid lower boundary actually implies that $F_E=0$ there
and therefore $F_E=0$ throughout the domain.  This condition on
$F_E$ must be satisfied by all solutions to the steady equations.

Finally, another physical quantity of interest in the context of
wind-driven waves is the mean horizontal drag force exerted by
the air on the water across the interface $y=\eta(x)$.  This drag
force equals minus the Lagrangian-mean momentum flux\footnote{Eulerian and Lagrangian momentum
flux definitions and their role in wave dynamics are discussed in
detail in \cite{buhler09}} across the
undulating air--water interface, which is
\begin{equation}
  \label{eq:2}
  \mbox{drag force}\quad= -F_L  =
  \frac{1}{L}\int\limits_{-L/2}^{L/2} \eta_x(x) P(x,\eta(x)) \,
  dx. 
\end{equation}
It is easy to show by integrating the steady Euler equations over a
control volume with lower boundary $y=\eta(x)$ and an upper
boundary of any constant altitude $y$ above the interface that
\begin{equation}
  \label{eq:3}
  F_E = F_L = 0
\end{equation}
holds for steady waves, i.e., the Eulerian momentum flux equals
minus the drag force across the interface, and both are zero for
steady waves.   Conversely, a 
nonzero drag force is incompatible with a steady flow, and this observation lies at the
heart of Miles's theory for wind-driven water waves, which is
briefly discussed in the next section.

\subsection{Critical layers and Miles's theory}
\label{sec:stagn-crit-layers}

From \eqref{euler2} and \eqref{boundcond} it is clear that points
where $u = c$ are of special importance.  When this occurs, the
relative horizontal 
velocity appears to vanish in the moving frame, meaning that the
horizontal velocity of the fluid particle at that point matches
that of the wave.  This scenario we refer to as
\emph{stagnation}.  Note that this differs from standard usage,
since for us only the relative horizontal velocity needs to be
zero.  In the classical theory of steady water waves, stagnation
is closely associated with a loss of regularity stemming from the
degeneracy of the governing equations.  The most well-known
instance of this phenomenon is the so-called \emph{extreme waves}
of Stokes (cf., e.g., \cite{amick1982stokes}).  Stagnation points
play an even more central role in the time-dependent theory of
wind-driven water 
waves.  Note that when a flow is laminar, i.e. it is 
of the form $(u,v) = (U(y), 0)$, the critical points will arrange
themselves in horizontal lines $y=y_c$, say, 
such that $U(y_c)=c$.  These lines are called \emph{critical layers}.
The central thesis 
of Miles's theory is that the presence of a critical layer
in the air region provides a mechanism for the wind to impart horizontal 
momentum on the water via a nonzero drag force in \eqref{eq:2},
and it is precisely this drag force that is responsible for
creating linear instability at slower wind speeds than predicted
by K--H.  (cf. \cite{miles1957windwaves1}). 

Specifically, if the vertical momentum flux $F_E$ at the upper
domain boundary is zero, which is consistent with a lidded
domain, and if there is only a single critical layer, then linear
wave theory for weakly unstable waves predicts that $F_E$ has a
jump discontinuity across $y=y_c$ such that the drag force
on the
water is proportional to the ratio
$(-U^{\prime\prime}/U^{\prime})$ evaluated at the critical level
$y=y_c$ (cf. \cite{miles1957windwaves1,drazin2004book}).
This makes obvious the crucial role played by a nonzero value of
the second derivative $U''(y_c)$ at the critical layer, without
which there could be no wave growth induced by the critical layer.   Indeed, if
$U^{\prime\prime}(y_c)$ is assumed to vanish, the critical layer
is neutered or inactive: the momentum flux $F_E$ is then
continuous across $y=y_c$ and the
critical layer plays no part in the generation of
waves.  

Now, for the purpose of studying steady waves one must either
rule out critical layers \emph{a priori}, or one must allow only
inactive critical layers by requiring that $U''(y_c)=0$.  Indeed,
the existence theory we present in \S\ref{unboundedshearsection}
for the constant vorticity unbounded atmosphere case allows for
these inactive critical layers.  
  
The majority of the present paper is concerned with the lidded
atmosphere case, and in that we regime we restrict our attention
to waves \emph{without} stagnation and hence without critical
layers. 
The alternative approach of studying the existence of small
amplitude traveling water waves with stagnation points in various
regimes has recently been considered in several works
(cf. \cite{wahlen2009critical,ehrnstrom2010interior,ehrnstrom2010multiple,lin2011inviscid}).
In each of these papers, the authors prove that there exists a
(local) 
curve of non-laminar flows bifurcating from a background laminar
flow with a critical layer.  Though it is not always stated
explicitly, 
in every case this is done under the assumption that the critical
layer occurs 
only at an inflection point of $U$ 
and therefore $U^{\prime\prime}(y_c) = 0$.  If a similar
restriction is taken, we believe it would be possible to
generalize the results (in lidded domains) 
given here to allow for critical layers in the background flow.
As the above discussion makes clear, however, in the context of wind-driven waves these types of profiles
are not considerably more interesting than those without
stagnation. 

\subsection{Stream function and circulation}
\label{sec:stre-funct-circ}

Assuming an absence of stagnation, \eqref{euler2} ensures that we
may define a function $\psi = \psi(x,y)$ by \be \psi_x =
-\sqrt{\varrho}v,\qquad \psi_y = \sqrt{\varrho} (u-c), \qquad
\textrm{in } \Omega. \label{defpsi} \ee This $\psi$ is known as
the (relative) \emph{pseudo-stream function} for the flow.  Here
we have the addition of a factor of $\varrho$ to the typical
definition of the stream function for an incompressible fluid.
This is meant to account for the inertial effects of the density
variation
(cf. \cite{walsh2009stratified,yih1965dynamics}). Observe also
that the restriction $u < c$ throughout the fluid becomes the
requirement: \be \psi_y < 0. \label{nostagnationcondition} \ee It
is evident from \eqref{defpsi} that $\psi$ is indeed a (relative)
stream function in the sense that $\nabla^\perp \psi$ is
collinear with the vector field $(u-c,v)$.  In other words, the
level sets of $\psi$, which we call \emph{streamlines}, coincide
with the standard definition of streamlines in the literature.

The kinematic condition in \eqref{boundcond} implies precisely
that the free surface, bed, and lid are each level sets of
$\psi$.  As \eqref{defpsi} only determines $\psi$ up to a
constant, we may take $\psi = 0$ on the upper lid, so that $\psi
= -p_0$ on $y = -d$, where $p_0$ is the \emph{(relative)
  pseudo-volumetric mass flux}: \be p_0 := \int_{-d}^{\ell}
\sqrt{\varrho(x,y)}\left[u(x,y)-c \right] \, dy. \label{defp0}
\ee It is straightforward to show that $p_0$ is a (strictly
negative) constant, i.e., it does not depend on $x$
(cf. \cite{walsh2009stratified}).  Physically, $p_0$ describes
the rate of fluid moving through any vertical line in the fluid
domain (and with respect to the transformed vector field
$\sqrt{\varrho}(u-c,v)$.)  Notice that, although we shall allow
$u$, $v$, and $\rho$ to have discontinuities across
$\mathcal{I}$, we assume that the pseudostream function is of class
$C(\overline{\Omega})$.  This is not an assumption:  because $\psi$ is defined by \eqref{defpsi} only up to a constant in each $\Omega^{(i)}$, we may without loss of generality take it to be continuous across the interface.

The conservation of mass in \eqref{euler2} implies that $\nabla
\varrho$ is orthogonal to the velocity field, and hence we may
let $\rho:[-p_0,0] \to \mathbb{R}^{+}$ be given such that \be
\varrho(x,y) = \rho(-\psi(x,y)) \label{defrho} \ee throughout the
fluid. We shall refer to $\rho$ as the \emph{streamline density
  function}, though one may alternatively view it as the
Lagrangian density.  In the ocean, one typically has that density
is increasing with depth, meaning that the lighter fluid elements
rest on the heavier ones.  Indeed, several physical mechanisms
work independently to enforce this situation: gravity naturally
leads to increased salinity near the bed, while temperature
decreases substantially as one moves deeper into the ocean where
the effects of the sun's heating are attenuated.  However, even
near the surface, water is on the order of 1000 times as dense as
the air.  The variations in the density are within the air and
water region are nowhere near as great as that across the
interface, and so we shall suppose that $\varrho$ is constant in
each region,
\[ \varrho^{(1)} \equiv \rho_{\textrm{air}}, \qquad \varrho^{(2)}
\equiv \rho_{\textrm{water}}.\] Stable stratification in this
case simply means $\rho_{\textrm{air}} < \rho_{\textrm{water}}$.

Conservation of energy can be expressed via Bernoulli's theorem,
which states that the quantity
\[ E := P + \frac{\varrho}{2}\left( (u-c)^2 + v^2\right) + g\rho
y, \] is constant along streamlines (see \cite{walsh2009stratified} for an elementary proof.)  If we
evaluate the jump of 
$E$ on the interface, we may use the dynamic
boundary condition to express the pressure in terms of $(u, v,
\eta)$, which gives rise to the following \be \jump{|\nabla
  \psi|^2} + 2g \jump{\rho} \left(\eta+d\right) = Q,
\qquad \textrm{on } y = \eta(x) \label{capgravdefQ} \ee where the
constant $Q := 2\jump{E+ g\rho d}$ gives roughly the jump
in the energy density across the free surface of the fluid.  We treat $Q$ as our parameter of bifurcation.

By taking the curl of the steady Euler's equations, one arrives at the identity
\[ \{ -\Delta \psi, \, \omega \} = 0 \qquad \textrm{in } \Omega, \]
where $\{\cdot, \cdot\}$ is the Poisson bracket, and $\omega$ is the scalar vorticity $\omega := v_x - u_y$.
Under the assumption that $u < c$ throughout the fluid, this allows us to conclude that there exists a single-valued function $\gamma$, called the \emph{vorticity strength function}, such that
\[  \omega(x,y) = \gamma(\psi(x,y)) \qquad \textrm{in } \Omega.\]

The final ingredient in our model is a condition on the
circulation on each streamline, namely that the circulation in
the air and water regions do not coincide.  This is meant to
ensure that the waves we construct are dynamically accessible
from an initial configuration of a shear profile wind blowing
over water.  Since the circulation will be constant along a
streamline, we define the function $\Gamma = \Gamma(p)$ defined
by \be \Gamma(p) := \int_{\mathscr{S}(p)} (u(x,y), v(x,y)) \cdot
d\mathbf{x} = \Gamma(p), \qquad \textrm{for } p_0 < p <
0, \label{euleriancirculationcond} \ee here $\mathscr{S}(p) := \{
(x,y) \in \Omega : -L/2 < x < L/2, \, \psi(x,y) = -p\}$.  For us,
then, the presence of wind means that there is a discontinuity in
$\Gamma$ at the streamline corresponding to the air-water
interface.

\subsection{Informal statement of results}
Now that the basic objects have been introduced, we can give a brief summary of our results.  The precise theorem statements are presented later.  

\begin{romannum}
\item Consider the existence of steady wind-driven water waves
  with a lidded atmosphere and irrotational flow in both the air
  and water and without stagnation.  Fix the period $L$, density
  jump $\jump{\rho}$, (pseudo) volumetric mass flux in the water
  region $p_1$, the (pseudo) relative circulation on the lid
  $\Gamma_{\mathrm{rel}}$, and the height of the lid $\ell$.
  Then, if a certain compatibility condition is met
  \eqref{compatibilitycond}, there exists a family of laminar
  flows that are exact solutions.  Moreover, if a size condition
  is satisfied \eqref{ideallbc}, there is a curve of small
  amplitude (classical) exact solutions bifurcating from this
  family.  See \S\ref{liddedidealsection} and Theorem
  \ref{ideallocalbifurcation}.

\item Consider the situation as in (i), but where the flow in the air region is rotational.   There is a corresponding compatibility condition relating $\gamma$, $\Gamma_{\textrm{rel}}$, $p_1$ and $\ell$ \eqref{shearcompatibilitycond}.  If it is satisfied, there exists of a one-parameter family of laminar flows, each with (pseudo) relative circulation $\Gamma_{\mathrm{rel}}$.  Moreover, under a certain local bifurcation condition \eqref{lbcnu} (or size condition \eqref{shearsizecond}), there is a curve of small amplitude (classical) exact solutions bifurcating from this family. See \S\ref{liddedvorticalsection} and Theorem \ref{shearlocalbifurcationtheorem}.

\item In the unbounded atmosphere regime, we fix the depth of the ocean $d$ and consider the existence of waves where the water region is irrotational and the wind has constant vorticity $\gamma_0$.   There is a family of laminar flows, parameterized (essentially) by the circulation at $y = +\infty$, and, under analogous bifurcation conditions (\eqref{unboundedideallbc} for $\gamma_0 = 0$ and \eqref{unboundedshearlbc}, otherwise), there are curves of small-amplitude (classical) exact solutions bifurcating from this family.  See \S\ref{unboundedidealsection}--\ref{unboundedshearsection} and Theorem \ref{unboundedideallocalbifurcationtheorem} and Theorem \ref{unboundedshearlocalbifurcationtheorem}.    Note that these results do not require an absence of stagnation.  In fact, when $\gamma_0 < 0$, there will be a critical layer in each of the background laminar flows.  \\ \end{romannum}

\noindent Notice that in each of (i) and (ii) there are hypotheses relating $\Gamma_{\textrm{rel}}$, $\ell$, $\gamma$, and $p_1$.  This is to be expected, in fact they can be viewed as a consequence of Stokes' theorem.  In our work, we elect to fix $p_0$ and $p_1$, as well as $\ell$ in the lidded case.  When the atmosphere is irrotational, these choices determine $\Gamma_{\textrm{rel}}$ by \eqref{compatibilitycond}; if the atmosphere is rotational, we take $\gamma$ to be fixed, and define $\Gamma_{\textrm{rel}}$ according to \eqref{shearcompatibilitycond}.   These choices are of course arbitrary and one can instead choose $\Gamma_{\textrm{rel}}$ and use \eqref{shearcompatibilitycond} to define $\gamma$ and $\ell$.  

Let us now briefly discuss the place of these results in the
existing literature.  The bifurcation theory techniques that we
employ have a long history in the study of steady water waves.
For the lidded regimes (points (i) and (ii)), we consider a
reformulation of the problem in semi-Lagrangian coordinates,
which has been used in a number of works, notably Amick--Turner
(cf., e.g.,
\cite{turner1981internal,turner1984variational,amick1984semilinear})
and Constantin--Strauss (cf. \cite{constantin2004exact}). Our
approach follows the latter in relying on elliptic estimates
rather than variational techniques.  More precisely, the method
we employ can be viewed as an adaptation of that in
\cite{walsh2009stratified} to the case of a non-continuously
stratified fluid in a channel, and with additional considerations
involving the (pseudo) relative circulation.  A similar problem
was considered by Amick and Turner in \cite{amick1986global},
ignoring the important issue of the circulation.  They, however,
were primarily interested in the solitary wave case, and so
developed the periodic existence theory only in order to obtain
solitary waves as a limit as the period goes to $+\infty$.  As a
consequence, this requires them to make certain assumptions on
$\gamma$ (otherwise the limiting wave will not be irrotational
and quiescent at $x = \pm\infty$); we do not impose any such
restriction.  It should be pointed out that the results of
Amick--Turner, as well as those of Constantin--Strauss, are
global, in the sense that the bifurcating curve of solutions is
extended to include waves with finite amplitude.  We believe that
similar result is possible in this case, since the basic
ingredients (mainly good Schauder-type \emph{a priori} estimates
for the elliptic equations involved) are available.  This is an
interesting and important question, but beyond the scope of the
preliminary investigations here.

Previous work on the infinite atmosphere case is comparatively
sparse.  In the applied literature, this is simply because the
lidded regime is seen as an adequate idealization of the infinite
atmosphere: so long as the 
wind curvature is evanescent and the air density is constant 
there is no mechanism for propagation of waves to or from
vertical infinity, and hence the dynamics far away from the
air--sea interface 
decay exponentially with altitude and 
are not thought to be particularly relevant.  Mathematically, of
course, removing the lid results in a loss of compactness, which
introduces some potentially serious difficulties.  Nonetheless,
we include as a simple application of our machinery a
mathematical treatment of the infinite atmosphere regime in the
case where the vorticity is constant in the air region.

\section{Formulation} \label{formulationsection} 

\subsection{Stream function formulation} \label{streamfunctionformulationsection}
The relevance of the pseudo-stream function and the vorticity strength function to the existence theory stems from the identity
\[ -\Delta \psi = \omega,\]
which, recalling the definition of the vorticity strength function leads to the semi-linear elliptic equation
\be -\Delta \psi = \gamma(\psi), \qquad \textrm{in } \Omega. \label{yiheq} \ee
One important consequence of the lack of stagnation, or, more accurately, the absence of \emph{active} critical layers, is that the Euler system can be recast as the above scalar problem.

Next, we note that the (steady) kinematic condition in \eqref{euler2} guarantees that the interface $\mathcal{I}$ is a streamline.  That is,
\[ \mathcal{I} = \{ \psi = -p_1\},\]
for some $p_1 > p_0$.  
The difference between $p_1$ and the value of $\psi$ at the top boundary of the domain gives the (pseudo) volumetric mass flux in the air region.  In the lidded case we thus take $p_1$ to be some fixed value and let $\psi|_{y = \ell} = 0$.  When the atmospheric region is unbounded, however, we let $\psi|_{\mathcal{I}} = 0$, and thus $\psi \to +\infty$ as $y \to +\infty$.  

Taken together, the considerations of the preceding paragraphs imply that obtaining solutions to \eqref{euler1}--\eqref{euleriancirculationcond} with a lidded atmosphere for a given $\rho$ and $\gamma$, is equivalent to solving the following problem:  Find $(\psi, \eta, Q)$ such that $\psi^{(i)}_y < 0$ in $\overline{\Omega^{(i)}}$, and 
\be \left\{ \begin{array}{ll} \Delta \psi + \gamma(\psi) = 0, & \textrm{in } \Omega, \\
\displaystyle \jump{|\nabla \psi|^2} + 2g \jump{\rho} \left(\eta+d\right)  - Q = 0, & \textrm{on } y = \eta(x), \\
\psi =  0 & \textrm{on } y = \ell, \\
\psi = -p_1, & \textrm{on } y = \eta(x), \\
\psi = -p_0, & \textrm{on } y = -d.\end{array} \right. \label{psieq} \ee
For any such solution, the relative circulation will then be given by \eqref{streamfunctioncirculationcond}.  We shall forestall a detailed discussion of the unbounded atmosphere case until \S \ref{unboundedidealsection} and \S\ref{unboundedshearsection}

Note that we have defined $\Omega$ so that it does not include the interface.  Thus \eqref{psieq} can be thought of as two separate elliptic problems in the domains $\Omega^{(1)}$ and $\Omega^{(2)}$, that must then be matched along the interface according to the jump condition \eqref{capgravdefQ}.  The advantage of this view point is that, while this a free boundary problem, the coefficients of the elliptic equation are all smooth.  Another way to proceed is to pose \eqref{psieq} in a weak sense on $\overline{\Omega}$, with the jump condition being represented as a measure supported on $\mathcal{I}$.  This conceptualization allows us to understand the matching procedure in the more conventional framework of elliptic problems with non-smooth coefficients, for which there is a great deal of theory.  As stated, we will be taking the first view --- namely, that \eqref{psieq} is two elliptic problems matched at the interface --- but occasionally will make use of the second view to derive some compactness properties of the corresponding operator.

Finally, let us discuss the circulation for the reformulated
problem.  Since $\mathscr{S}(p)$ is a level set of $\psi$, and we
have
\[ (u,v) = \frac{1}{\sqrt{\rho}} \nabla^\perp \psi + (c,0),\]
the circulation in the air is given by 
\[ \Gamma(p) = L c - 
\int_{\mathscr{S}(p)} \frac{1}{\sqrt{\rho(p)}} |\nabla \psi| \, d
\mathcal{H}^1,\] where $\mathcal{H}^1$ denotes one-dimensional
Hausdorff measure, 
which is equal to the arc length of the interface. 
It will be more convenient to consider the quantity \be
\Gamma_{\textrm{rel}}(p) := \frac{1}{L} \int_{\mathscr{S}(p)}
|\nabla \psi| \,
d\mathcal{H}^1,\label{streamfunctioncirculationcond} \ee which we
call the \emph{(pseudo) relative circulation}.  $\Gamma$ and
$\Gamma_{\textrm{rel}}$ are then related according to the
equation \be \Gamma_{\textrm{rel}}(p) = \sqrt{\rho(p)}
(c- \frac{\Gamma(p)}{L}).\label{defGammarel} \ee The advantage
of considering $\Gamma_{\textrm{rel}}$ in place of $\Gamma$ is
merely that it is simpler to express in terms of $\psi$, while
being equivalent for a specified $\rho$, $c$, and $L$.  

\subsection{Height equation formulation}

The main difficulty that remains in \eqref{psieq} is that the
domain $\Omega$ is an unknown.  Absent stagnation, this can be
rectified by considering a change of variables $(x, y) \mapsto
(q,p)$, where
\[ q := x, \qquad p: = -\psi(x,y).  \] This procedure is known
variously as the semi-Lagrangian transformation   
or the Dubreil-Jacotin transformation.  The
effect is to map a single period of $\Omega$ into a union of
rectangles $D = D^{(1)} \cup D^{(2)} \subset \mathbb{R}^2$, since
$\partial \Omega$ is mapped to the sets $\{ p = 0 \} \cup \{ p =
p_0\}$.  Note that, by definition, $\{ p = p_1\}$ is the image of
$\mathcal{I}$ under the transformation.  The image of the air
region $\Omega_1$ is thus
\[ D_1 := \{ (q,p) \in D : 0 < q < L, \, p_1 < p < 0\},\]
 while the water region is mapped to 
 \[ D_2 := \{ (q,p) \in D : 0 < q < L, \, p_0 < p < p_1\}.\]  
With that in mind, we put
\[ T := \{ p = 0\}, \qquad I := \{ p = p_1\}, \qquad B := \{ p= p_0\}.\]

Let $h = h(q,p)$ be the height above the bed of the point with $x = q$ and lying on the streamline $\{\psi = -p\}$, 
\[ h(q,p) := \mathfrak{y}(q,p) + d,\]
where $\mathfrak{y} = \mathfrak{y}(q,p)$ is the vertical variable $y$ in the new coordinates.  More explicitly, it is the unique solution to 
\[ \psi(q, \mathfrak{y}(q,p)) = -p,\]
the existence of which is guaranteed by the absence of stagnation.  By adopting the semi-Lagrangian coordinates, we are strongly exploiting the fact that there are no critical layers in the flow.

Equation  \eqref{psieq} can be reformulated as an equivalent problem for $h$: Find  $(h,Q)$ with $h$ even in $q$, $h_p > 0$ in $D$, and satisfying the height equation
\be \left \{ \begin{array}{lll}
(1+h_q^2)h_{pp} + h_{qq}h_p^2 - 2h_q h_p h_{pq} = -h_p^3 \gamma(-p), & \textrm{in } D_1 \cup D_2, \\
& & \\
\jump{\frac{\displaystyle 1+h_q^2}{\displaystyle h_p^2}} +2g\jump{\rho} h- Q  = 0, & \textrm{on } p = p_1, \\
& & \\
h = 0, & \textrm{on } p = p_0, \\
h = \ell + d(h), & \textrm{on } p = 0 \end{array}\right. \label{heighteq} \ee
Here $d$ is the depth operator
\[ d(h) := \frac{1}{L} \int_{-L/2}^{L/2} h(q,p_1)\, dq.\]
Note that we do not specify the value of $d$ in advance, rather
the correct value of $d(h)$  emerges self-consistently from the equations.
These equations  can be found by applying the procedure as in
\cite{walsh2009stratified} to obtain the interior equation, and
using the following change of variables formulas to reformulate
the jump condition:
\[ h_q = \frac{v}{u-c}, \qquad h_p =
\frac{1}{\sqrt{\varrho}(c-u)}, \qquad v = 
-\frac{h_q}{\sqrt{\varrho} h_p}, \qquad u = c- \frac{1}{\sqrt{\varrho} h_p},\]
\[ \partial_x = \partial_q - \frac{h_q}{h_p} \partial_p,
\qquad \partial_y = \frac{1}{h_p} \partial_p.\] 
Note that the slope $h_q$ is continuous across the interface so
could also be extracted from the jump condition on $p=p_1$.  
The relative circulation can be calculated from $h$ by
\[ \Gamma_{\textrm{rel}}(p) = \displaystyle \frac{1}{L} \int_{-L/2}^{L/2}
{\frac{1+h_q^2}{h_p}} \, dq,
\qquad \textrm{for } p_1 \leq p \leq 0.\]
%
Lastly, let us set down some notation and describe the regularity of the solutions we wish to study.  For any $k \in \mathbb{N}$, $\alpha \in (0,1)$, and smooth region $\mathcal{R} \subset \mathbb{R}^2$,  the space $C_{\textrm{per}}^{k+\alpha}(\overline{\mathcal{R}})$ is defined as the set of $C^{k+\alpha}(\overline{\mathcal{R}})$  that are $L$-periodic and even in their first coordinate.    We are seeking smooth solutions to the problems enumerated above.  Specifically, we look for solutions to the  Euler problem of class $\mathscr{S}$, the stream function problem of class $\mathscr{S}^\prime$, and the height equation of class $\mathscr{S}^{\prime\prime}$ defined as follows
\begin{align*} (u,v,\varrho, \eta) \in \mathscr{S} &:= \left(C_{\textrm{per}}^{\alpha}(\overline{\Omega}) \cap C_{\textrm{per}}^{1+\alpha}(\overline{\Omega} \setminus \mathcal{I}) \right)^3 \times  C_{\textrm{per}}^{2+\alpha}(\mathbb{R}), \\
(Q, \psi, \eta) \in \mathscr{S}^\prime & := \mathbb{R} \times (C_{\textrm{per}}^{1+\alpha}(\overline{\Omega_1}) \cap C_{\textrm{per}}^{1+\alpha}(\overline{\Omega_2}) \cap C_{\textrm{per}}^{\alpha}(\overline{\Omega}) \cap C_{\textrm{per}}^{2+\alpha}(\overline{\Omega} \setminus \mathcal{I}) ) \\
& \qquad  \times  C_{\textrm{per}}^{2+\alpha}(\mathbb{R}), \\
(Q, h) \in \mathscr{S}^{\prime\prime} & := \mathbb{R} \times \left(C_{\textrm{per}}^{1+\alpha}(\overline{D_1}) \cap C_{\textrm{per}}^{1+\alpha}(\overline{D_2}) \cap C_{\textrm{per}}^{\alpha}(\overline{D}) \cap C_{\textrm{per}}^{2+\alpha}(\overline{D} \setminus I)\right). \end{align*}
Put more plainly, we want, e.g., solutions to the height equation to be $C^{\alpha}$ in the whole domain, $C^{1+\alpha}$ up to the interface, and of class $C^{2+\alpha}$ away from the interface.   The regularity of the other quantities is a direct consequence of that choice.  

\begin{lemma}[Equivalence] \label{equivalencelemma} The following statements are equivalent.
\begin{romannum}
\item There exists a solution of class $\mathscr{S}$ to the steady stably stratified Eulerian problem \eqref{euler2}--\eqref{euleriancirculationcond} without stagnation.
\item  There exists a solution of class $\mathscr{S}^\prime$ to the streamfunction problem \eqref{psieq}.
\item There exists  a solution of class $\mathscr{S}^{\prime\prime}$ to the height equation problem \eqref{heighteq}.
\end{romannum}
\end{lemma}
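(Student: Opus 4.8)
The plan is to establish the cycle of implications (i) $\Rightarrow$ (ii) $\Rightarrow$ (iii) $\Rightarrow$ (i), with the streamline density $\rho$ and the vorticity strength function $\gamma$ held fixed throughout; each arrow is an explicit construction, so the real work is checking that the construction respects both the structural conditions (no stagnation, $\psi_y < 0$, $h_p > 0$) and the prescribed Hölder classes $\mathscr{S}$, $\mathscr{S}'$, $\mathscr{S}''$. For (i) $\Rightarrow$ (ii) I would start from $(u,v,\varrho,\eta) \in \mathscr{S}$ solving \eqref{euler2}--\eqref{euleriancirculationcond} without stagnation and define $\psi$ in each component $\Omega^{(i)}$ by line-integrating the form $-\sqrt{\varrho}\,v\,dx + \sqrt{\varrho}\,(u-c)\,dy$, which is closed in each component by \eqref{incompress} (recall $\varrho$ is constant there); $L$-periodicity and evenness in $x$ of $(u,v)$ let one choose $\psi$ $L$-periodic and even, and the two free additive constants (one per component, as $\Omega$ is disconnected) are fixed by $\psi|_{y=\ell}=0$ and by demanding $\psi\in C(\overline\Omega)$, as in \S\ref{sec:stre-funct-circ}. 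Conservation of mass forces $\nabla\varrho$ collinear with $(u-c,v)$, hence $\varrho=\rho(-\psi)$; taking the curl of \eqref{euler2} and using $-\Delta\psi=\omega$ together with $\psi_y<0$ gives $\omega=\gamma(\psi)$, i.e.\ the interior equation of \eqref{psieq}; the kinematic conditions in \eqref{boundcond} are exactly the three Dirichlet conditions; and Bernoulli's theorem evaluated across $\mathcal I$, combined with $\jump P = 0$, reproduces the jump condition \eqref{capgravdefQ}. Since $\psi$ gains a derivative over $(u,v)$ through \eqref{defpsi}, the membership $(Q,\psi,\eta)\in\mathscr{S}'$ follows, and $\psi_y=\sqrt{\varrho}(u-c)<0$ is the no-stagnation hypothesis restated.

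For (ii) $\Rightarrow$ (iii): given $(Q,\psi,\eta)\in\mathscr{S}'$ with $\psi_y<0$, the map $(x,y)\mapsto(q,p):=(x,-\psi)$ is, on each period, a bijection of $\Omega^{(i)}$ onto the rectangle $D_i$ with the same regularity as $\psi$, and its inverse produces $h=\mathfrak{y}+d$ with $h_p=1/(\sqrt{\varrho}(c-u))>0$. Inserting the change-of-variables identities for $\partial_x,\partial_y$ recorded after \eqref{heighteq} into $\Delta\psi+\gamma(\psi)=0$ yields the quasilinear interior equation; the Dirichlet conditions become $h=0$ on $p=p_0$ and $h=\ell+d(h)$ on $p=0$, with $d(h)$ the depth operator, using the normalization \eqref{normalsurface}; rewriting $\jump{|\nabla\psi|^2}$ via $h_p,h_q$ gives the Bernoulli jump on $p=p_1$; evenness of $h$ in $q$ is inherited from the data; and the relative-circulation formula for $h$ follows from the arclength expression \eqref{streamfunctioncirculationcond} after the substitution. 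The implication (iii) $\Rightarrow$ (i) simply reverses this: from $(Q,h)\in\mathscr{S}''$ with $h_p>0$ one inverts $y=h(x,p)-d$ to recover $\psi$, sets $u:=c-1/(\sqrt{\varrho}h_p)$, $v:=-h_q/(\sqrt{\varrho}h_p)$, $\eta:=h(\cdot,p_1)-d$, $\varrho:=\rho(-\psi)$, and defines $P$ from Bernoulli's theorem with the streamline constant pinned so that the jump condition in \eqref{heighteq} becomes $\jump P=0$; differentiating these definitions and invoking the height equation recovers \eqref{euler2}, the boundary conditions of \eqref{heighteq} give \eqref{boundcond}, and $u-c=-1/(\sqrt{\varrho}h_p)<0$ rules out stagnation, while the Hölder regularity $(u,v,\varrho,\eta)\in\mathscr{S}$ follows by transporting the estimates back through the change of variables.

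I expect the genuine difficulty to lie not in the (essentially piecewise) repetition of the semi-Lagrangian computations of \cite{walsh2009stratified} and \cite{constantin2004exact}, but in the bookkeeping showing that each transformation maps one prescribed function space \emph{exactly} onto the next — above all across the interface, where the solutions are only one-sidedly $C^{1+\alpha}$, the coefficients of the elliptic and quasilinear problems jump, and the very meaning of the jump condition depends on the right traces being available. The structural fact that keeps the matching coherent is that, while $h_p$ (equivalently $1/(\sqrt{\varrho}(c-u))$, and hence $u$ and $v$) jumps across $I$ by an amount governed by the Bernoulli condition and $Q$, the height $h$ together with its slope $h_q=\eta_x$ is continuous there; this partial $C^1$-matching must be verified and carried through every arrow of the cycle, and it is what lets the two one-sided $C^{2+\alpha}_{\textrm{per}}$ interior regularities coexist with a globally $C^{\alpha}_{\textrm{per}}$ solution. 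A secondary and more routine nuisance is that the maps producing $\mathfrak{y}$ from $\psi$ and $\psi$ from $h$ are only implicit, so preserving $C^{k+\alpha}$ regularity under them, and under composition, requires the standard but slightly delicate inverse-function-theorem estimates in Hölder spaces.
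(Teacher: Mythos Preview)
Your proposal is correct and follows the same standard route as the paper, which in fact dispatches this lemma in two sentences by declaring it routine and citing \cite[Lemma~2.1]{constantin2004exact} and \cite[Lemma~2.1]{walsh2009stratified}; you have supplied considerably more detail than the authors themselves do. The only point the paper singles out that you treat slightly differently is the global $C^{\alpha}_{\textrm{per}}(\overline{\Omega})$ regularity of $\psi$: the authors appeal to the elliptic regularity result Theorem~\ref{ellipticregtheorem} for the transmission problem, whereas you obtain it (also validly) from the one-sided $C^{1+\alpha}$ regularity and boundedness of $\nabla\psi$, which makes $\psi$ Lipschitz across $\mathcal{I}$.
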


\begin{proof} This lemma is routine.  See, for example, \cite[Lemma 2.1]{constantin2004exact} or \cite[Lemma 2.1]{walsh2009stratified}.  One point worth mentioning is that, while in our discussion of the formulation we only stated that $\psi$ was continuous across the boundary, as a solution of \eqref{psieq} it must in fact be of class $C_{\textrm{per}}^\alpha(\overline{\Omega})$ by elliptic regularity (see Theorem \ref{ellipticregtheorem}).    \qquad \end{proof}

\section{Local bifurcation for irrotational gravity
  waves} \label{liddedidealsection} We begin by considering the
simplest case where the flow in both the air and water regions is
irrotational ($\gamma \equiv 0$). In this setting we need only
consider the value of $\Gamma_{\textrm{rel}}$ on the lid.  That
is, the presence of wind 
here we interpret as having a nonzero $\Gamma_{\textrm{rel}}$ on
$p = 0$.  Without loss, therefore, in this section we redefine
$\Gamma_{\textrm{rel}}$ to be a positive constant.  The height
equation simplifies to: \be \left \{ \begin{array}{lll}
    (1+h_q^2)h_{pp} + h_{qq}h_p^2 - 2h_q h_p h_{pq} = 0 & \textrm{in } D_1 \cup D_2, \\
    & & \\
    \jump{\frac{\displaystyle 1+h_q^2}{\displaystyle h_p^2}} + 2g\jump{\rho} h- Q  = 0, & \textrm{on } p = p_1, \\
    & & \\
    h = 0, & \textrm{on } p = p_0, \\
    h = \ell + d(h), & \textrm{on } p = 0, \end{array}
\right. \label{idealheighteq} \ee and the relative circulation on
the lid for a solution $h$ is given by
\[ \displaystyle \frac{1}{L} \int_{-L/2}^{L/2} h_p(q,0)^{-1}\, dq = \Gamma_{\textrm{rel}}.\]
This follows from the fact that $h_q \equiv 0$ on the lid.  

Our main theorem on this topic is the following.  
\begin{theorem}[Local bifurcation for irrotational
  flows] \label{ideallocalbifurcation} Consider the existence of
  steady waves with a lidded atmosphere and irrotational flow in
  the air and water regions where $\Gamma_{\textrm{rel}}$ is
  given by \be |p_1| = \Gamma_{\textrm{rel}}
  \ell \label{compatibilitycond}.\ee Assume that the following
  ideal local bifurcation condition holds \be -g\jump{\rho} +
  \Gamma_{\mathrm{rel}}^2
  \coth{(\frac{p_1}{\Gamma_{\mathrm{rel}}})} <
  0. \tag{ILBC} \label{ideallbc} \ee Then there exists a
  continuous curve of non-laminar solutions to the height
  equation for irrotational flow \eqref{idealheighteq}  
\[ \mathcal{C}_{\mathrm{loc}}^{\prime\prime} = \{ (Q(s),h(s))  : |s| < \epsilon \} \subset \mathscr{S}^{\prime\prime}, \]
for $\epsilon > 0$ sufficiently small, such that $(Q(0), h(0)) = (Q(\lambda^*), H(\lambda^*))$, and, in a sufficiently small neighborhood of $(Q(\lambda^*), H(\lambda^*))$ in $\mathscr{S}^{\prime\prime}$, $\mathcal{C}_{\mathrm{loc}}^\prime$ comprises all non-laminar solutions. 
\end{theorem}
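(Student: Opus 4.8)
\emph{Plan.} I would realize $\mathcal{C}_{\mathrm{loc}}^{\prime\prime}$ as a branch emanating from the laminar family of \eqref{idealheighteq} by an application of the Crandall--Rabinowitz local bifurcation theorem.

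\emph{Step 1: the laminar family.} A laminar solution of \eqref{idealheighteq} has $h=h(p)$, so $h_q\equiv 0$ and the interior equation forces $h$ to be affine in $p$ on each of $D_1,D_2$. Imposing $h=0$ on $p=p_0$, continuity and the jump condition at $p=p_1$, and $h=\ell+d(h)$ on $p=0$ determines all but one free parameter $\lambda$ (which may be taken to be the constant value of $h_p$ in the water, equivalently the laminar depth or the wave speed); the constant value of $h_p$ in the air and the Bernoulli constant $Q=Q(\lambda)$ are then fixed. Matching the lid condition $\tfrac1L\int h_p(q,0)^{-1}\,dq=\Gamma_{\mathrm{rel}}$ with these choices is possible precisely when the data satisfy the compatibility relation \eqref{compatibilitycond}; granting this, one obtains a smooth curve $\lambda\mapsto (Q(\lambda),H(\lambda))$ of exact solutions, and I restrict to a subinterval of parameters on which $Q$ is strictly monotone in $\lambda$ (i.e.\ away from the fold of this curve).

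\emph{Step 2: functional-analytic setup.} I rewrite \eqref{idealheighteq} as $\mathcal{F}(\lambda,w)=0$, with $h=H(\lambda)+w$, where $\mathcal{F}$ maps an open neighborhood of $0$ in the space underlying $\mathscr{S}^{\prime\prime}$ (respecting the transmission structure at $I$) into a suitable product of periodic H\"older spaces. Since $h_p>0$ on the laminar flow, the relevant open set keeps the quasilinear operator uniformly elliptic, and there $\mathcal{F}$ is smooth (indeed real-analytic). Because $Q$ enters the interfacial condition affinely, on the monotone branch I may absorb it into the bifurcation parameter (so $Q=Q(\lambda)$); then $\mathcal{F}(\lambda,0)=0$ for all $\lambda$, and, crucially, the linearization $\mathcal{L}_\lambda:=D_w\mathcal{F}(\lambda,0)$ does \emph{not} contain the tangent to the trivial branch in its kernel.

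\emph{Step 3: the linearized transmission problem and the dispersion relation.} Computing $\mathcal{L}_\lambda$ (most transparently from the divergence form $\big(\tfrac{1+h_q^2}{2h_p^2}\big)_p-\big(\tfrac{h_q}{h_p}\big)_q=0$ of the interior equation) gives the constant-coefficient problem $w_{pp}+\big(H_p^{(i)}\big)^2 w_{qq}=0$ in $D_i$, with $w=0$ on $B$, $w=d(w)$ on $T$, and continuity together with the flux jump $\jump{-2H_p^{-3}w_p}+2g\jump{\rho}\,w=0$ on $I$. By the elliptic theory for such interface problems (Theorem~\ref{ellipticregtheorem}), $\mathcal{L}_\lambda$ is Fredholm of index zero. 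Expanding in the cosine modes $\cos(2\pi k q/L)$, $k\ge 0$, the $k=0$ block is solved explicitly and is invertible except at the single parameter value that coincides with the fold of the laminar curve, which has been excluded; each mode $k\ge 1$ reduces to a two-point ODE whose solvability is governed by a dispersion function $\delta_k(\lambda)$ built from hyperbolic cotangents of the air- and water-layer widths measured in the $\psi$-variable. A direct computation identifies \eqref{ideallbc} as exactly the condition that $\delta_1$ vanish at an admissible parameter value $\lambda^*$, and monotonicity of $\delta_k$ in $k$ then forces $\delta_k(\lambda^*)\neq 0$ for all $k\ge 2$. Hence $\ker\mathcal{L}_{\lambda^*}=\operatorname{span}\{w_*\}$ with $w_*(q,p)=\varphi_*(p)\cos(2\pi q/L)$ one-dimensional, and its range has codimension one.

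\emph{Step 4: transversality and conclusion.} Since $\delta_1$ depends monotonically on $\lambda$ near $\lambda^*$, one has $\delta_1'(\lambda^*)\neq 0$, which by the usual computation is equivalent to $\partial_\lambda\mathcal{L}_\lambda\big|_{\lambda^*}w_*\notin\operatorname{ran}\mathcal{L}_{\lambda^*}$. The Crandall--Rabinowitz theorem then yields a $C^1$ curve $\{(Q(s),h(s)):|s|<\epsilon\}\subset\mathscr{S}^{\prime\prime}$ with $(Q(0),h(0))=(Q(\lambda^*),H(\lambda^*))$ and $\partial_s h(0)$ proportional to $w_*$; in particular the solutions with $s\neq 0$ are non-laminar, and the local-structure part of the theorem gives that, in a sufficiently small neighborhood of $(Q(\lambda^*),H(\lambda^*))$ in $\mathscr{S}^{\prime\prime}$, every solution of \eqref{idealheighteq} lies either on the laminar family or on this curve. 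A final Schauder bootstrap (Theorem~\ref{ellipticregtheorem}) confirms that these solutions are classical, so the curve indeed lies in $\mathscr{S}^{\prime\prime}$.

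\emph{Main obstacle.} The crux is Step~3: carrying out the spectral analysis of $\mathcal{L}_\lambda$ carefully enough to (i) pin down $\lambda^*$ and verify that it corresponds to an admissible laminar flow exactly when \eqref{ideallbc} holds, and (ii) rule out simultaneous resonance of the higher Fourier modes so that the kernel is genuinely one-dimensional; both hinge on monotonicity properties of the $\coth$-type dispersion function. The setup of Steps~2--3 (the elliptic transmission problem, its Fredholm property, and the smoothness of $\mathcal{F}$) is more routine but must be arranged with some care because of the interface.
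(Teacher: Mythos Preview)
Your approach is essentially the paper's: set up $\mathcal{F}(\lambda,w)=0$ with $h=H(\lambda)+w$, identify the laminar family (Lemma~\ref{ideallaminarlemma}), compute the linearization, solve the resulting Sturm--Liouville transmission problem mode-by-mode to obtain the dispersion relation (Lemma~\ref{dispersionlemma}), and then invoke Crandall--Rabinowitz. Two points deserve comment. First, you assert that $\lambda^*$ lies away from the fold of $\lambda\mapsto Q(\lambda)$ (equivalently, that the $k=0$ block is invertible there), but this must be \emph{proved}: the paper does so in Lemma~\ref{idealloclambda0lemma} by an integration-by-parts argument comparing the energy identities at $\lambda^*$ and at the critical point $\lambda_0$; without it, neither the reparametrization of the trivial branch by $Q$ nor the invertibility of the zero mode is justified. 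Second, your transversality argument via monotonicity of $\delta_1$ in $\lambda$ is valid (the paper notes that $\lambda\mapsto\lambda^2\coth((p_1-p_0)/\lambda)$ is strictly increasing), but the paper instead verifies transversality by directly computing the range functional $\Xi$ against the explicit generator $\varphi^*$ (Lemma~\ref{idealtransverselemma}); both work, and yours is arguably cleaner for the irrotational case.

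On the Fredholm property: your appeal to Theorem~\ref{ellipticregtheorem} is slightly too quick because the boundary condition on $T$ is nonlocal ($w=d(w)$), which is not covered by that theorem as stated. The paper handles this by projecting off the zero mode first and then proving the range characterization (Lemma~\ref{idealrangelemma}) via an $\epsilon$-regularized approximate problem and a compactness argument; the Remark preceding that lemma concedes that for the irrotational case a direct mode-by-mode ODE argument, closer to what you sketch, would also suffice.
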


\begin{remark}  Using the equivalence of the three formulations, the theorem can be stated in terms of the original Eulerian problem as follows.  Consider the existence of steady waves with a lidded atmosphere and irrotational flow in the air and water regions.  Fix the (pseudo) volumetric mass flux in the air region to be $p_1$, the height of the lid to be $\ell$; this forces any laminar flow to have (pseudo) relative circulation $\Gamma_{\textrm{rel}}$ defined by \eqref{compatibilitycond}.  If, in addition, the ideal local bifurcation condition \eqref{ideallbc} is satisfied, then there is a corresponding continuous curve 
\[ \mathcal{C}_{\mathrm{loc}} = \{ (Q(s), u(s), v(s), \varrho(s), P(s), \eta(s)) : |s| < \epsilon \}\]
of small amplitude solutions to the Eulerian problem for an ideal fluid, which likewise captures all non-laminar solutions in a sufficiently small neighborhood of the point of bifurcation.  

It is also worth mentioning that, in contrast to the existence theory developed in \S\ref{liddedvorticalsection}, hypotheses \eqref{compatibilitycond} and \eqref{ideallbc} are both necessary and sufficient for local bifurcation to occur. 
\end{remark}

The proof of this is result is developed over the next several subsections.  

\subsection{Laminar flows}
A laminar flow is one in which the free surface is unperturbed, meaning that $\eta \equiv 0$, and where the streamlines are parallel to the bed.  In terms of the height equation formulation, this entails a solution with the ansatz $h = H(p)$, and $H(0) = d(H)$.  For such solutions, the PDE in \eqref{idealheighteq} reduces to an ODE: 
\be \label{ideallaminarprob} \left\{\begin{split} H_{pp} &= 0, \qquad \textrm{in } (p_0, p_1) \cup (p_1, 0),  \\
\jump{H_p^{-2}} + 2g\jump{\rho}H - Q &= 0, \qquad \textrm{on }  p = p_1, \\
H(0) &= \ell + d(H), \\
H(p_0) &= 0.  \end{split} \right. \ee
This problem can be easily solved explicitly, which leads to the following lemma.

\begin{lemma}[Laminar flow] \label{ideallaminarlemma}  For a fixed $p_0, p_1, \ell,$ and  $\jump{\rho}$, if $\Gamma_{\mathrm{rel}}$ is given by \eqref{compatibilitycond}, then there exists a one-parameter family of solutions $\{ (H(\cdot;\lambda), Q(\lambda)): \lambda > 0\}$ to the laminar flow equation \eqref{ideallaminarprob} with $H_p > 0$ and where each solution has relative circulation $\Gamma_{\mathrm{rel}}$ on the lid.  Explicitly,
\be H(p; \lambda) = \left\{\begin{array}{ll} \displaystyle \frac{p}{\Gamma_{\textrm{rel}}} + \ell + \frac{p_1 - p_0}{\lambda} , &  p_1 < p < 0  \\ & \\
\\ \displaystyle\frac{p-p_0}{\lambda}, & p_0 < p < p_1 \end{array} \right. \label{ideallaminarformula} \ee
and
\be Q(\lambda) = \frac{2g\jump{\rho}(p_1-p_0)}{\lambda} + \Gamma_{\textrm{rel}}^2 - \lambda^2.  \label{defideallaminarQ} \ee
In particular, the depth of the fluid at parameter value $\lambda$ is 
\be   d(\lambda) = \frac{\lambda}{p_1 - p_0}, \label{ideallaminardepth} \ee
and the width of the corresponding channel is
\be W(\lambda) :=  \ell + d(\lambda) = \ell + \frac{\lambda}{p_1 - p_0}. \label{ideallaminarwidth} \ee
\end{lemma}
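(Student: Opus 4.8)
The plan is to solve the laminar ODE system \eqref{ideallaminarprob} directly, since it decouples into two linear second-order ODEs on the intervals $(p_0,p_1)$ and $(p_1,0)$ subject to the boundary, interface, and constraint conditions. First I would note that $H_{pp}=0$ forces $H$ to be affine on each of the two subintervals, so we may write $H(p) = a_2(p-p_0)$ on $(p_0,p_1)$ (already using $H(p_0)=0$) and $H(p) = a_1 p + b_1$ on $(p_1,0)$, with four unknowns $a_1,b_1,a_2$ and the implicit depth $d$. The unknowns are pinned down by: (i) continuity of $h$ across the interface, $\jump{h}=0$ on $p=p_1$ (this is built into the function space $\mathscr S''$ since $h\in C^\alpha_{\mathrm{per}}(\overline D)$), which gives $a_1 p_1 + b_1 = a_2(p_1-p_0)$; (ii) the circulation constraint on the lid, $\frac1L\int h_p(q,0)^{-1}\,dq = \Gamma_{\mathrm{rel}}$, which for a laminar flow reads simply $a_1 = 1/\Gamma_{\mathrm{rel}}$; (iii) the lid condition $H(0) = \ell + d(H)$ together with the definition of the depth operator $d(H) = H(p_1)$ in the laminar case; and (iv) the dynamic jump condition on $p=p_1$, which is what determines $Q$ once $H$ is known.

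Next I would introduce the parameter. The natural choice, matching the statement, is to let $\lambda := 1/a_2 = p$-derivative-inverse of $H$ in the water region, i.e. $H_p \equiv \lambda^{-1}$ on $(p_0,p_1)$, equivalently $\lambda$ is the reciprocal slope in the lower layer. Then $H(p;\lambda) = (p-p_0)/\lambda$ on $(p_0,p_1)$, so $H(p_1;\lambda) = (p_1-p_0)/\lambda$, which is exactly $d(\lambda)$ by the laminar depth operator, recovering \eqref{ideallaminardepth}. Continuity at $p=p_1$ then forces $b_1 = (p_1-p_0)/\lambda - a_1 p_1 = (p_1-p_0)/\lambda - p_1/\Gamma_{\mathrm{rel}}$, and the lid condition $H(0;\lambda) = b_1$ must equal $\ell + d(H) = \ell + (p_1-p_0)/\lambda$; this is consistent precisely when $-p_1/\Gamma_{\mathrm{rel}} = \ell$, i.e. $|p_1| = \Gamma_{\mathrm{rel}}\ell$ (recall $p_1<0$), which is the compatibility condition \eqref{compatibilitycond}. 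Assuming that, we get $b_1 = \ell + (p_1-p_0)/\lambda$ and hence the stated formula \eqref{ideallaminarformula}. Finally, plugging into the dynamic condition, $\jump{H_p^{-2}} = H_p^{-2}\big|_{(1)} - H_p^{-2}\big|_{(2)} = a_1^{-2} - \lambda^2 = \Gamma_{\mathrm{rel}}^2 - \lambda^2$ at $p=p_1$ (since on the air side $H_p \equiv a_1 = 1/\Gamma_{\mathrm{rel}}$), and $2g\jump{\rho} H(p_1;\lambda) = 2g\jump{\rho}(p_1-p_0)/\lambda$, so solving for $Q$ gives \eqref{defideallaminarQ}. The width formula \eqref{ideallaminarwidth} is then immediate from $W = \ell + d(\lambda)$.

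The remaining checks are monotonicity and the range of $\lambda$: one must verify $H_p > 0$ throughout $D$, which reads $\lambda^{-1} > 0$ in the water layer and $\Gamma_{\mathrm{rel}}^{-1} > 0$ in the air layer; the latter holds since $\Gamma_{\mathrm{rel}} > 0$ by the standing convention of \S\ref{liddedidealsection}, and the former holds for any $\lambda > 0$, which is why the family is parameterized by $\lambda > 0$. It is also worth confirming that each member of the family has the prescribed relative circulation $\Gamma_{\mathrm{rel}}$ on the lid, which is automatic from step (ii) above, and that $H(\cdot;\lambda)$ is even in $q$ (trivially, since it is $q$-independent). Honestly, there is no genuine obstacle here: the system is explicitly solvable and the only thing to be careful about is the bookkeeping of signs (recall $p_0 < p_1 < 0$ so that $p_1 - p_0 > 0$ and $|p_1| = -p_1$) and making sure the compatibility condition \eqref{compatibilitycond} emerges as exactly the solvability criterion for the overdetermined-looking boundary conditions rather than being imposed gratuitously. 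The bulk of the "proof" is simply displaying the formulas and noting they satisfy each line of \eqref{ideallaminarprob}.
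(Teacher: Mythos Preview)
Your argument is correct and follows exactly the route the paper intends: the paper omits the proof entirely, remarking only that it is straightforward and that \eqref{compatibilitycond} is what guarantees continuity across the interface, and your write-up fills in precisely those details (the same computation appears in full for the rotational case in the proof of Lemma~\ref{shearlaminarlemma}). One small bookkeeping point: your computation gives $d(\lambda)=H(p_1;\lambda)=(p_1-p_0)/\lambda$, which is consistent with \eqref{ideallaminarformula} but is the reciprocal of what is printed in \eqref{ideallaminardepth} and \eqref{ideallaminarwidth}; this is a typo in the stated lemma (repeated in \eqref{shearlaminardepth}), not an error in your reasoning.
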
 
\begin{remark}  Let us make a few comments on this lemma.  \begin{remunerate}
\item  The proof of this lemma is straightforward.  Notice that \eqref{compatibilitycond} is required to ensure that the resulting solution is continuous across the interface.  Thus it is a necessary condition for the existence of laminar flows.  

\item Writing the corresponding solution in Eulerian form gives $(u,v) = (U(y), 0)$, where
\[ U(y; \lambda) =  \left\{\begin{array}{ll} \displaystyle c- \frac{\Gamma_{\textrm{rel}}}{\sqrt{\rho_{\textrm{air}}}}  , &  0< y < \ell  \\ \displaystyle c- \frac{\lambda}{\sqrt{\rho_{\textrm{water}}}} & -d< y < 0 \end{array} \right. \]
Thus the parameter $\lambda$ essentially dictates the relative speed in the water region while $\Gamma_{\textrm{rel}}$ dictates the relative speed in the air.

\item Differentiating \eqref{defideallaminarQ} in $\lambda$, it
  is clear that $\lambda \mapsto Q(\lambda)$ is concave and has a
  unique maximum 
at $\lambda = \lambda_0$, where
\be \lambda_0^3 = - g\jump{\rho} (p_1 - p_0).  \label{deflambda0}\ee \end{remunerate}   \end{remark}

\subsection{Linearized problem}
Fixing $\lambda > 0$, we next linearize the full height equation problem 
\eqref{idealheighteq} around $(H(\cdot;\lambda), Q(\lambda))$, which results in the following:
\be \left \{ \begin{array}{lll}
m_{pp} + m_{qq}H_p^2 = 0, & \textrm{in } D_1 \cup D_2, \\
\jump{H_p^{-3} m_p} = g\jump{\rho}m, & \textrm{on } p = p_1, \\ 
m =  0 & \textrm{on } p = p_0, \\
m - d(m) = 0 & \textrm{on } p = 0. \end{array} \right. \label{ideallinearproblem} \ee

For simplicity, we specialize to $L= 2\pi$;  the general case can be approached via rescaling.  Now, since we seek solutions that are $2\pi$-periodic and even in $q$, it is natural to take $m$ to have the ansatz $m(q,p) = M(p)\cos{(nq)}$, for some $n \geq 0$.  Inserting this into \eqref{ideallinearproblem} we see immediately that, for $n \geq 1$, $M$ must satisfy the following:
\be\left\{ \begin{array}{ll}
 a_\lambda^2 M_{pp} = n^2 M   & \textrm{in } (p_0, p_1) \cup (p_1, 0) \\ 
\jump{a_\lambda^3 M_p} = g \jump{\rho}M & \textrm{on } p = p_1, \\
M  = 0 & \textrm{on } p = p_0, \\
M = 0 &  \textrm{on } p = 0. \end{array}\right. \label{idealsturmliouvilleode} \tag{$P_n$}  \ee
Here we are denoting
\[ a_\lambda(p)  := H_p(p; \lambda)^{-1} = \left\{\begin{array}{ll}
 \Gamma_{\textrm{rel}} & p_1 < p < 0, \\   \lambda & p_0 < p < p_1. \end{array}\right. \]
On the other hand, when $n  = 0$, the equation in the interior are the same as above, but the depth operator $d$ does not vanish, meaning that the boundary condition is nonlocal:
\be\left\{ \begin{array}{ll}
 a_\lambda^2 M_{pp} = 0   & \textrm{in } (p_0, p_1) \cup (p_1, 0) \\ 
\jump{a_\lambda^3 M_p} = g \jump{\rho}M & \textrm{on } p = p_1, \\
M  = 0 & \textrm{on } p = p_0, \\
M(0)= M(p_1) . & \end{array}\right. \label{idealsturmliouvilleode0} \tag{$P_0$}  \ee  

Solving \eqref{idealsturmliouvilleode} for any value of $n \geq 1$ will produce a $2\pi/n$-periodic solution to the linearized problem.  The next lemma gives the relation between the wavelength, circulation, and the parameter value $\lambda$ (which, recall, is associated with the speed in the water region).   This can be seen as a form of dispersion relation.  

\begin{lemma} \label{dispersionlemma} For each $n \geq 1$, there exists a nontrivial solution $M$ to \eqref{idealsturmliouvilleode} if and only if $\lambda = \lambda_n$, where $\lambda_n$ satisfies
\be \frac{g \jump{\rho}}{n}  = \Gamma_{\mathrm{rel}}^2 \coth{(\frac{np_1}{\Gamma_{\mathrm{rel}}})} - (\lambda_n^*)^2 \coth{(\frac{n(p_1-p_0)}{\lambda_n^*})}.  \label{idealdispersionrelation} \ee
Such a $\lambda_n^*$ will exist if and only if 
\be \frac{g\jump{\rho}}{n} - \Gamma_{\mathrm{rel}}^2 \coth{(\frac{np_1}{\Gamma_{\mathrm{rel}}})} < 0.  \label{ideallineardispersionsizecond} \ee
Indeed, if it exists, $\lambda_n$ is unique.  
If \eqref{idealdispersionrelation} (or, equivalently, \eqref{ideallineardispersionsizecond})  holds, the space of solutions is one-dimensional and spanned by 
\be M_n^*(p) := \left\{ \begin{array}{ll} 
\sinh{(\frac{n p}{\Gamma_{\textrm{rel}}})} & p_1 < p < 0, \\
\mu \sinh{(\frac{n (p - p_0)}{\lambda_n^*})} & p_0 < p < p_1,
\end{array} \right. \label{idealMnformula} \ee
where 
\be \mu = \mu(\lambda, n) := \frac{\displaystyle \sinh{(\frac{n p_1}{\Gamma_{\textrm{rel}}})}}{\displaystyle \sinh{(\frac{n(p_1-p_0)}{\lambda})}}. \label{idealdefmui} \ee
Finally, there 
are nontrivial solutions to \eqref{idealsturmliouvilleode0} if and only if $\lambda = \lambda_0$, where $\lambda_0$ is as in \eqref{deflambda0}.  
\end{lemma}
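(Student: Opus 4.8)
The plan is to solve the two-point boundary value problems \eqref{idealsturmliouvilleode} and \eqref{idealsturmliouvilleode0} essentially by hand, since on each of the intervals $(p_0,p_1)$ and $(p_1,0)$ the coefficient $a_\lambda$ is constant and the ODE is a constant-coefficient linear equation. For $n \geq 1$: on the air side $(p_1,0)$ the equation reads $M_{pp} = (n/\Gamma_{\textrm{rel}})^2 M$, whose solutions are linear combinations of $\sinh(np/\Gamma_{\textrm{rel}})$ and $\cosh(np/\Gamma_{\textrm{rel}})$, and the homogeneous Dirichlet condition $M(0) = 0$ kills the hyperbolic-cosine part, leaving $M = A\sinh(np/\Gamma_{\textrm{rel}})$. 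Likewise on the water side $(p_0,p_1)$ the equation is $M_{pp} = (n/\lambda)^2 M$, and $M(p_0) = 0$ forces $M = C\sinh(n(p-p_0)/\lambda)$. Continuity of $M$ across $p = p_1$, which is built into the admissible function space, then gives $C = A\mu$ with $\mu$ as in \eqref{idealdefmui}; normalizing $A = 1$ produces precisely $M_n^*$ of \eqref{idealMnformula}. Thus, for any fixed $\lambda$, everything except the jump condition already cuts the solution set down to an at-most-one-dimensional space.

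The remaining constraint is the jump condition $\jump{a_\lambda^3 M_p} = g\jump{\rho}M$ at $p = p_1$. I would compute the one-sided limits of $a_\lambda^3 M_p$ from the air side ($a_\lambda = \Gamma_{\textrm{rel}}$) and the water side ($a_\lambda = \lambda$), use $\mu\cosh(n(p_1-p_0)/\lambda) = \sinh(np_1/\Gamma_{\textrm{rel}})\coth(n(p_1-p_0)/\lambda)$ to consolidate the water-side term, and then divide through by the nonzero factor $n\sinh(np_1/\Gamma_{\textrm{rel}})$ (nonzero because $p_1 \neq 0$). This collapses exactly to the dispersion relation \eqref{idealdispersionrelation}. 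Consequently a nontrivial solution of \eqref{idealsturmliouvilleode} exists if and only if \eqref{idealdispersionrelation} holds at $\lambda = \lambda_n$, in which case the solution space is the one-dimensional span of $M_n^*$.

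Existence and uniqueness of $\lambda_n$ then reduces to analyzing $f(\lambda) := \lambda^2\coth(n(p_1-p_0)/\lambda)$ on $(0,\infty)$. Since $p_1 - p_0 > 0$ the argument of $\coth$ is positive, so $\coth(n(p_1-p_0)/\lambda) > 1$ and $f > 0$; moreover
\[ f'(\lambda) = 2\lambda\coth\Big(\frac{n(p_1-p_0)}{\lambda}\Big) + n(p_1-p_0)\csch^2\Big(\frac{n(p_1-p_0)}{\lambda}\Big) > 0, \]
so $f$ is strictly increasing, with $f(\lambda) \to 0$ as $\lambda \to 0^+$ and $f(\lambda) \sim \lambda^3/(n(p_1-p_0)) \to \infty$ as $\lambda \to \infty$; hence $f$ is a strictly increasing bijection of $(0,\infty)$ onto itself. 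Rewriting \eqref{idealdispersionrelation} as $f(\lambda_n) = \Gamma_{\textrm{rel}}^2\coth(np_1/\Gamma_{\textrm{rel}}) - g\jump{\rho}/n$, a (necessarily unique) positive root $\lambda_n$ exists if and only if the right-hand side is positive, which is exactly condition \eqref{ideallineardispersionsizecond}.

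For $n = 0$ one argues directly in \eqref{idealsturmliouvilleode0}: $M_{pp} = 0$ makes $M$ affine on each subinterval, $M(p_0) = 0$ pins $M = C(p-p_0)$ on the water side, and the nonlocal condition $M(0) = M(p_1)$ forces the air-side slope to vanish, so $M$ is constant there. Matching the values at $p = p_1$ and then imposing the jump condition reduces to $-\lambda^3 C = g\jump{\rho}(p_1-p_0)C$, so a nontrivial solution requires $\lambda^3 = -g\jump{\rho}(p_1-p_0)$, i.e. $\lambda = \lambda_0$ as in \eqref{deflambda0} (positive because $\jump{\rho} = \rho_{\textrm{air}} - \rho_{\textrm{water}} < 0$). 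The only real care needed anywhere is sign bookkeeping — $p_0 < p_1 < 0$ while $p_1 - p_0 > 0$, and $\jump{\rho} < 0$ — so that the positivity of $f$, the monotonicity, and the endpoint limits come out correctly; there is no substantive analytic obstacle, as the whole argument is an explicit linear ODE computation.
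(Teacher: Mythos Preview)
Your proposal is correct and follows essentially the same route as the paper: solve the constant-coefficient ODE on each subinterval, impose the Dirichlet data at $p_0$ and $0$ to obtain the hyperbolic-sine representations, match for continuity at $p_1$ to get $\mu$, and then reduce the jump condition to the dispersion relation \eqref{idealdispersionrelation}, with the $n=0$ case handled by the same piecewise-affine argument. Your treatment of the existence and uniqueness of $\lambda_n^*$ is in fact slightly more explicit than the paper's (you compute $f'(\lambda)$ and the endpoint asymptotics, whereas the paper simply appeals to ``elementary calculus''), but the substance is identical.
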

\begin{proof}  Fix $n \geq 1$ and consider \eqref{idealsturmliouvilleode}.   By the ODE satisfied by $M$, we have immediately that 
\[ M(p) = C_1 \exp{(n a_{\lambda}^{-1} p)} + C_2 \exp{(-n a_{\lambda}^{-1} p)},\]
where 
\[C_i = \left\{\begin{array}{ll} C_i^{(1)} & \textrm{ in } (p_1,0), \\
C_i^{(2)} & \textrm{ in } (p_0, p_1), \end{array}\right. \qquad \textrm{for } i = 1,2. \]
From the boundary conditions on the top and bottom, we have
\[ C_1^{(1)} = -C_2^{(1)}, \qquad C_1^{(2)} = -C_2^{(2)} \exp{(-\frac{2n p_0}{\lambda})}.   \]
Continuity of $M$ at the interface implies that
\[ C_{1}^{(1)} \sinh{(\frac{np_1}{\Gamma_\textrm{rel}})} = C_1^{(2)} \exp{(\frac{np_0}{\lambda})} \sinh{(\frac{n(p_1-p_0)}{\lambda})}.\]
Incorporating these observations, we can write $M$ in the simplified form 
\be \begin{split} M^{(1)}(p) &= C \sinh{(\frac{np}{\Gamma_\textrm{rel}})}, \\
M^{(2)}(p) & = \mu C \sinh{(\frac{n(p-p_0)}{\lambda})}, \end{split} \label{ideallineareigenfunctions} \ee
where $\mu = \mu(\lambda,n)$ is as defined by \eqref{idealdefmui}.  

Lastly, we must ensure that the jump condition at the interface is met.  We compute from the above expression for $M$ that 
\begin{align*} \jump{a^3_\lambda M_p} &= \left[ \Gamma_{\textrm{rel}}^3 M_p^{(1)} - \lambda^3 M_p^{(2)}\right]\bigg|_{p=p_1} \\
& =  C n \Gamma_{\textrm{rel}}^2 \cosh{(\frac{np_1}{\Gamma_\textrm{rel}})} \\
& \qquad - \mu C n \lambda^2 \cosh{(\frac{n(p_1-p_0)}{\lambda})}. \end{align*}
Equating this to $g\jump{\rho} M(p_1)$ found via \eqref{ideallineareigenfunctions} and simplifying yields
\begin{align*} g\jump{\rho} \sinh{(\frac{np_1}{\Gamma_{\textrm{rel}}})} &= n \Gamma_{\textrm{rel}}^2 \cosh{(\frac{np_1}{\Gamma_{\textrm{rel}}})}  \\
& \qquad -  \mu n \lambda^2 \cosh{(\frac{n(p_1-p_0)}{\lambda})}.  \end{align*}
Recalling the definition of $\mu$, this becomes
\[ \frac{g \jump{\rho}}{n}  = \Gamma_{\textrm{rel}}^2 \coth{(\frac{np_1}{\Gamma_{\textrm{rel}}})} - \lambda^2 \coth{(\frac{n(p_1-p_0)}{\lambda})},    \]
which is the stated dispersion relation \eqref{idealdispersionrelation}.  

Fix $n \geq 1$ and consider the map $\lambda \in \mathbb{R}^+ \mapsto \lambda^2 \coth(n(p_1-p_0)/\lambda) \in \mathbb{R}^+$.  Elementary calculus confirms that it is a strictly increasing and nonnegative.  Therefore, provided that \eqref{ideallineardispersionsizecond} holds, i.e., 
\[ \frac{g\jump{\rho}}{n} - \Gamma_{\textrm{rel}}^2 \coth{(\frac{np_1}{\Gamma_{\textrm{rel}}})} < 0,\]
there is a unique $\lambda = \lambda_n^*$ for which the dispersion relationship \eqref{idealdispersionrelation} is satisfied. On the other hand, if this inequality does not hold, then there will be no such $\lambda$ and thus no nontrivial solutions to the eigenvalue problem.

Next consider the zero-mode case $n = 0$.  Letting $m(q,p) = M(p)$ in \eqref{ideallinearproblem} we see that $M$ must be piecewise linear.  Moreover, the condition at $p =0$ implies 
\[ M(0) =  d(M) = M(p_1).\]
From this we infer that $M \equiv M(0)$ on the interval $[p_1,0]$.  Due to the boundary condition at $p_0$ and the piecewise linearity of $M$, we know
\[ M(p) =  M(p_1) \frac{p-p_0}{p_1 - p_0}, \qquad p \in [p_0, p_1]. \]
Using this to evaluate the jump condition reveals that
\[ -\frac{\lambda^3}{p_1 - p_0} M(p_1) = g \jump{\rho} M(p_1).\]
Thus there is a nontrivial zero-mode solution if and only if
\[ \lambda^3 = - g\jump{\rho} (p_1 - p_0) = \lambda_0^3. \] 
This completes the proof.  
 \qquad \end{proof}

One final technical point needs to be made:  since the laminar curve is parameterized by $\lambda$, and the solutions we seek depend on $Q$, we need to ensure that at the point of bifurcation $Q$ is an invertible function of $\lambda$.  This is demonstrated in the next lemma.

\begin{lemma} \label{idealloclambda0lemma} For each $n \geq 1$ such that \eqref{ideallineardispersionsizecond} holds,  $Q$ is an invertible function of $\lambda$ in a neighborhood of $\lambda_n^*$.  
\end{lemma}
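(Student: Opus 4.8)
The plan is to reduce the claim to the single inequality $\lambda_n^* \neq \lambda_0$ and then establish it by a monotonicity comparison. First I would differentiate the explicit formula \eqref{defideallaminarQ}: $Q'(\lambda) = -2g\jump{\rho}(p_1-p_0)\lambda^{-2} - 2\lambda$, so that $Q'(\lambda) = 0$ if and only if $\lambda^3 = -g\jump{\rho}(p_1-p_0)$, i.e.\ $\lambda = \lambda_0$ by \eqref{deflambda0}. Hence $Q$ is a smooth, strictly monotone function of $\lambda$ on each of the intervals $(0,\lambda_0)$ and $(\lambda_0,\infty)$, and by the inverse function theorem it will be invertible in a neighborhood of $\lambda_n^*$ provided $\lambda_n^* \neq \lambda_0$.

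To see $\lambda_n^* \neq \lambda_0$ — in fact the sharper statement $\lambda_n^* < \lambda_0$ — I would use the strictly increasing function $f(\lambda) := \lambda^2\coth(n(p_1-p_0)/\lambda)$ that already appears in the proof of Lemma \ref{dispersionlemma}, for which $f(\lambda_n^*) = \Gamma_{\mathrm{rel}}^2\coth(np_1/\Gamma_{\mathrm{rel}}) - g\jump{\rho}/n$ by the dispersion relation \eqref{idealdispersionrelation}. Writing $t_0 := n(p_1-p_0)/\lambda_0 > 0$ and using \eqref{deflambda0} in the form $\lambda_0^2/t_0 = -g\jump{\rho}/n$, one gets $f(\lambda_0) = (-g\jump{\rho}/n)\,t_0\coth t_0$. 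Since $g\jump{\rho} < 0$ (stable stratification gives $\jump{\rho} < 0$) while $p_1-p_0 > 0$, the factor $-g\jump{\rho}/n$ is positive, and the elementary inequality $t\coth t > 1$ for $t \neq 0$ then yields $f(\lambda_0) > -g\jump{\rho}/n$. On the other hand, $p_1 < 0 < \Gamma_{\mathrm{rel}}$ forces $\coth(np_1/\Gamma_{\mathrm{rel}}) < 0$, so $f(\lambda_n^*) < -g\jump{\rho}/n$. Comparing the two, $f(\lambda_n^*) < f(\lambda_0)$, and the monotonicity of $f$ established in Lemma \ref{dispersionlemma} forces $\lambda_n^* < \lambda_0$.

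Putting the two steps together, $Q'(\lambda_n^*) \neq 0$, and since $Q$ is concave with its maximum at $\lambda_0$ (see the remark following Lemma \ref{ideallaminarlemma}) and $\lambda_n^* < \lambda_0$, the map $\lambda \mapsto Q(\lambda)$ is strictly increasing near $\lambda_n^*$; in particular it is invertible there, with a smooth inverse. The only point requiring any care is the sign bookkeeping in the second paragraph — tracking the signs of $p_1$, $p_1-p_0$, $\jump{\rho}$, and of the two hyperbolic cotangents, together with the inequality $t\coth t > 1$ — but this is routine, and I expect no genuine obstacle.
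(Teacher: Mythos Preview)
Your proof is correct, and the sign bookkeeping checks out: with $p_0<p_1<0$, $\Gamma_{\mathrm{rel}}>0$, and $\jump{\rho}<0$ you indeed get $f(\lambda_n^*)<-g\jump{\rho}/n<f(\lambda_0)$, and the strict monotonicity of $f$ from Lemma~\ref{dispersionlemma} yields $\lambda_n^*<\lambda_0$.  The first reduction step (concavity of $Q$, unique critical point at $\lambda_0$) coincides with the paper's.

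Where you diverge from the paper is in the proof that $\lambda_n^*\neq\lambda_0$.  The paper does not use the explicit dispersion relation at all; instead it multiplies the eigenvalue equation \eqref{idealsturmliouvilleode} by $M_n^*$ and integrates by parts to obtain
\[
g\jump{\rho}M_n^*(p_1)^2+\int_{p_0}^0 a_{\lambda_n^*}^3(\partial_p M_n^*)^2\,dp<0,
\]
and then uses Cauchy--Schwarz together with the algebraic definition of $\lambda_0$ to show that the corresponding quantity with $a_{\lambda_0}$ is nonnegative, forcing $\lambda_n^*\neq\lambda_0$.  Your argument is more elementary here and even delivers the sharper ordering $\lambda_n^*<\lambda_0$; the price is that it leans on the explicit closed-form solutions available in the irrotational case.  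The paper's variational argument, by contrast, makes no use of explicit formulas and is exactly what is recycled in the rotational setting (Lemma~\ref{shearexistenceminimizer}), where no closed-form dispersion relation is available.
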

\begin{proof}   Fix $n \geq 1$ satisfying \eqref{ideallineardispersionsizecond} and let $\lambda_n^*$ and $M_n^*$ be defined as in \eqref{idealdispersionrelation} and \eqref{idealMnformula}, respectively.  From the formula  \eqref{defideallaminarQ}, it is obvious that $Q$ is a strictly concave function of $\lambda$, and hence to prove the lemma it suffices to show that $\lambda_n^* \neq \lambda_0$, where $\lambda_0$ is the unique critical point of $Q$. 

Multiplying \eqref{idealsturmliouvilleode} by $M_n^*$ and integrating by parts, we obtain 
\[ \int_{p_0}^0 a_{\lambda_n^*}^3 (\partial_p M_n^*)^2 \, dp + n^2 \int_{p_0}^0 a_{\lambda_n^*} (M_n^*)^2 \, dp + g\jump{\rho} M_n^*(p_1)^2 = 0.\]
Upon regrouping terms, this becomes 
\be 0 > -n^2 \int_{p_0}^0 a_{\lambda_n^*} (M_n^*)^2 = g \jump{\rho} M_n^*(p_1)^2 + \int_{p_0}^0 a_{\lambda_n^*}^3 (\partial_p M_n^*)^2 \, dp.  \label{idealloclambda01} \ee
On the other hand, as $M_n^*(p_0) = 0$,  
\begin{align*} M_n^*(p_1)^2 = \left(\int_{p_0}^{p_1} (\partial_p M_n^*) \, dp \right)^2 & \leq \left( \int_{p_0}^{p_1} a_{\lambda_0}^3 (\partial_p M_n^*)^2 \, dp \right) \left( \int_{p_0}^{p_1} a_{\lambda_0}^{-3} \, dp \right) \\
& = \left( \int_{p_0}^{p_1} a_{\lambda_0}^3 (\partial_p M_n^*)^2 \, dp \right) \left( \frac{p_1 -p_0}{\lambda_0^3} \right) \\ 
& = -\frac{1}{g \jump{\rho}} \int_{p_0}^{p_1} a_{\lambda_0}^3 (\partial_p M_n^*)^2 \, dp.\end{align*}
From this it follows that 
\be 0 \leq g\jump{\rho} M_n^*(p_1)^2 + \int_{p_0}^{p_1} a_{\lambda_0}^3 (\partial_p M_n^*)^2 \, dp. \label{idealloclambda02} \ee
Inequalities \eqref{idealloclambda01} and \eqref{idealloclambda02} cannot be reconciled if $\lambda_0 = \lambda_n^*$, allowing us to conclude that this is never the case.  \qquad \end{proof}

\subsection{Proof of local bifurcation}

The objective of this section is to apply the theory of local bifurcation from simple eigenvalues to construct small amplitude (non-laminar) solutions to the wind wave problem, eventually culminating in Theorem \ref{ideallocalbifurcation}.  The machinery we employ is the classical work of Crandall and Rabinowitz, which, in the interest of readability, is included in the appendix as Theorem \ref{crandallrabinowitz}.

Our first task is to put our problem into the framework of Theorem \ref{crandallrabinowitz}.  One cosmetic difference is that we wish to bifurcate from the family of laminar solutions, whereas Theorem \ref{crandallrabinowitz} concerns bifurcation from solutions of the form $(\lambda, 0)$.  With that in mind, let $(h,Q)$ solve the height equation, and suppose $h(q,p) = H(\cdot; \lambda) + m(q,p)$ and $Q = Q(\lambda)$.  Then
\be \left \{ \begin{array}{lll}
(1+m_q^2)(m_{pp}+H_{pp}) + m_{qq}(m_p+H_p)^2 & \\
 - 2m_q (H_p +m_p) m_{pq} = 0 & \textrm{in } D_1 \cup D_2, \\
& & \\
\jump{\frac{\displaystyle 1+m_q^2}{\displaystyle (H_p+m_p)^2}} + 2g\jump{\rho} (m+H)- Q  = 0, & \textrm{on } p = p_1, \\
& & \\
m = 0, & \textrm{on } p = p_0, \\
m+H - \ell - d(m) - d(H) = 0, & \textrm{on } p = 0. \end{array} \right. \label{perturbedidealheighteq} \ee
This can be restated equivalently as
\[ \mathcal{F}(\lambda, m) = 0,\]
where $\mathcal{F} = (\mathcal{F}_1, \mathcal{F}_2, \mathcal{F}_3, \mathcal{F}_4) : \mathbb{R} \times X \to Y$ is defined by
\be \label{idealF} \begin{split} \mathcal{F}_1(\lambda, w) & := (1+(w_q^{(1)})^2)(w_{pp}^{(1)}+H_{pp}) + w_{qq}^{(1)}(w^{(1)}_p+H_p)^2  \\
 & \qquad - 2w_q^{(1)} (H_p +w_p^{(1)}) w_{pq}^{(1)}  \\
 \mathcal{F}_2(\lambda, w) & := (1+(w_q^{(2)})^2)(w_{pp}^{(2)}+H_{pp}) + w_{qq}^{(2)}(w^{(2)}_p+H_p)^2  \\
 & \qquad - 2w_q^{(2)} (H_p +w_p^{(2)}) w_{pq}^{(2)} \\
\mathcal{F}_3(\lambda, w) & := -\jump{\frac{\displaystyle 1+w_q^2}{\displaystyle (H_p+w_p)^2}} - 2g\jump{\rho} (w+H)|_I + Q \\
\mathcal{F}_4(\lambda, w) & := \left(w-d(w)+ H-d(H) - \ell\right)|_T. 
 \end{split} \ee
 Here, the Banach spaces $X$ and $Y = Y_1 \times Y_2 \times Y_3 \times Y_4$ are
 \[ X := \{ h \in C_{\textrm{per}}^{2+\alpha}(\overline{D} \setminus I) \cap C^{\alpha}_{\textrm{per}}(\overline{D}): h(p_0) = 0, \, h^{(i)} \in C_{\textrm{per}}^{1+\alpha}(\overline{D_i})\},\] 
 \[ Y_1 :=  C_{\textrm{per}}^{2+\alpha}(\overline{D_1} \setminus I) \cap C^{\alpha}_{\textrm{per}}(\overline{D_1}), \qquad Y_2 := C_{\textrm{per}}^{2+\alpha}(\overline{D_2} \setminus I) \cap C_{\textrm{per}}^{\alpha} (\overline{D_2})\]
 \[ Y_3 :=  C_{\textrm{per}}^{\alpha}(I), \qquad Y_4 := C_{\textrm{per}}^{2+\alpha}(T).\]
 Observe that, by Lemma \ref{ideallaminarlemma}, $\mathcal{F}(\lambda, 0) = 0$ for every positive $\lambda$.  In particular, we shall consider bifurcation from the lowest eigenvalue of the linearized problem found in the previous section.  We shall therefore assume that \eqref{ideallineardispersionsizecond} holds for $n = 1$ and denote $\lambda^* := \lambda_1^*$.  
 
 For later reference, we now record the Fr\'echet derivative of $\mathcal{F}$ with respect to $w$ at $(\lambda^*,0)$.
 \be \begin{split}  \label{idealFw}
 \mathcal{F}_{iw}(\lambda^*, 0)\varphi & = \left(\partial_p^2 + H_p^2 \partial_q^2\right) \varphi^{(i)} \qquad \textrm{for } i = 1,2, \\
 \mathcal{F}_{3w}(\lambda^*,0)\varphi & = 2\jump{H_p^{-3} \varphi_p} - 2g \jump{\rho} \varphi, \\
 \mathcal{F}_{4w}(\lambda^*,0)\varphi & = \left(\varphi - d(\varphi) \right)_T.
 \end{split} \ee 
\begin{lemma}[Null space] \label{idealnullspacelemma} The null space of $\mathcal{F}_{w}(\lambda^*, 0)$ is one-dimensional.
\end{lemma}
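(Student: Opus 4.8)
The plan is to recognize that $\ker \mathcal{F}_w(\lambda^*,0)$ is nothing but the space of solutions of the linearized height equation \eqref{ideallinearproblem} at $\lambda = \lambda^*$, and to resolve it mode by mode. Reading off \eqref{idealFw}, a function $\varphi \in X$ lies in the null space exactly when $\varphi_{pp} + H_p^2\varphi_{qq} = 0$ in $D_1 \cup D_2$, $\jump{H_p^{-3}\varphi_p} = g\jump{\rho}\varphi$ on $I$, $\varphi = 0$ on $B$ (this last being built into the space $X$), and $\varphi - d(\varphi) = 0$ on $T$; since $H = H(\cdot;\lambda^*)$ this is precisely \eqref{ideallinearproblem}. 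Because every element of $X$ is $2\pi$-periodic and even in $q$, I would expand $\varphi(q,p) = \sum_{n\ge 0} M_n(p)\cos(nq)$ and project the equation onto each mode. Using the $C^{2+\alpha}$ regularity of $\varphi$ on $\overline{D}\setminus I$ (together with interior elliptic estimates to justify termwise differentiation), each $M_n$ is then a classical solution of the Sturm--Liouville problem $(P_n)$ for $n\ge 1$, and of $(P_0)$ for $n=0$, with $\lambda = \lambda^*$.

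By Lemma \ref{dispersionlemma}, for $n\ge 1$ the problem $(P_n)$ has a nontrivial solution iff $\lambda_n^*$ exists and equals $\lambda^*$, in which case its solution space is one-dimensional (spanned by $M_n^*$), while $(P_0)$ has a nontrivial solution iff $\lambda^* = \lambda_0$. Since by construction $\lambda^* = \lambda_1^*$, the lemma will follow once I show $\lambda_1^* \ne \lambda_0$ and $\lambda_1^* \ne \lambda_n^*$ for every $n\ge 2$ for which $\lambda_n^*$ is defined: this forces $M_n \equiv 0$ for all $n\ne 1$ and $M_1 = c\,M_1^*$, so $\ker \mathcal{F}_w(\lambda^*,0)$ is spanned by the single function $(q,p)\mapsto M_1^*(p)\cos q$. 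The inequality $\lambda_1^* \ne \lambda_0$ is already supplied by Lemma \ref{idealloclambda0lemma}, which shows that no $\lambda_n^*$ coincides with the critical point $\lambda_0$ of $\lambda\mapsto Q(\lambda)$.

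To rule out $\lambda_1^* = \lambda_n^*$ with $n\ge 2$, I would rewrite the dispersion relation \eqref{idealdispersionrelation} (after multiplying through by $n$) as $\Phi(n,\lambda_n^*) = g\jump{\rho}$, where
\[ \Phi(n,\lambda) := n\,\Gamma_{\mathrm{rel}}^2\coth\!\Big(\frac{np_1}{\Gamma_{\mathrm{rel}}}\Big) - n\,\lambda^2\coth\!\Big(\frac{n(p_1-p_0)}{\lambda}\Big). \]
The key claim is that $\Phi$ is strictly decreasing in $n$ for each fixed $\lambda$ and strictly decreasing in $\lambda$ for each fixed $n$; the latter is exactly the monotonicity of $\lambda\mapsto\lambda^2\coth(n(p_1-p_0)/\lambda)$ already used in the proof of Lemma \ref{dispersionlemma}, and the former follows from the identity $\frac{d}{dn}\big(n\coth(nt)\big) = \big(\tfrac12\sinh 2nt - nt\big)/\sinh^2(nt)$, whose sign is that of $\tfrac12\sinh 2nt - nt$ --- negative when $t < 0$ and positive when $t > 0$ --- applied with $t = p_1/\Gamma_{\mathrm{rel}} < 0$ in the first term and $t = (p_1-p_0)/\lambda > 0$ in the second. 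Granting this, if $\lambda_n^*$ exists for some $n\ge 2$ then $\Phi(n,\lambda_1^*) < \Phi(1,\lambda_1^*) = g\jump{\rho} = \Phi(n,\lambda_n^*)$, and the strict monotonicity of $\Phi(n,\cdot)$ forces $\lambda_n^* < \lambda_1^*$; in particular $\lambda_n^* \ne \lambda^*$, completing the proof.

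The step I expect to be the main obstacle is precisely this last monotonicity computation --- not because it is deep, but because the signs are delicate: $p_1/\Gamma_{\mathrm{rel}}$ is negative while $(p_1-p_0)/\lambda$ is positive, so the two $\coth$ terms in $\Phi$ carry opposite signs and the relevant one-variable derivative changes sign with its argument, so one must check that both contributions to $\partial_n \Phi$ end up with the same (negative) sign. Once that is pinned down, the monotonicity of the eigenvalue sequence $\{\lambda_n^*\}$, hence the simplicity of the null space, is immediate. The remaining ingredients --- the mode-by-mode reduction and the termwise differentiation of the Fourier series --- are routine given the regularity built into $X$ and the elliptic theory quoted in the paper.
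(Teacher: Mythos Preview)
Your approach is essentially the same as the paper's: expand $\varphi$ in a cosine series, project onto each mode, and invoke Lemma~\ref{dispersionlemma} to see that only the $n=1$ mode can survive at $\lambda=\lambda^*$. The paper's proof is extremely terse at the key step, simply asserting ``by Lemma~\ref{dispersionlemma} and the definition of $\lambda^*$\ldots\ $\varphi_n$ vanishes identically for $n\neq 1$,'' whereas Lemma~\ref{dispersionlemma} only guarantees uniqueness of each $\lambda_n^*$, not that the $\lambda_n^*$ are mutually distinct. Your monotonicity argument for $\Phi(n,\lambda)$ actually supplies this missing ingredient: the sign analysis of $\tfrac{d}{dn}[n\coth(nt)]$ is correct (negative when $t=p_1/\Gamma_{\mathrm{rel}}<0$, positive when $t=(p_1-p_0)/\lambda>0$, so both contributions to $\partial_n\Phi$ are negative), and together with the $\lambda$-monotonicity already used in Lemma~\ref{dispersionlemma} it yields $\lambda_n^*<\lambda_1^*$ for $n\ge 2$. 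Combined with Lemma~\ref{idealloclambda0lemma} for the zero mode, this makes your proof more complete than the paper's own, while following the same route.
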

\begin{proof}  Let $\varphi \in \mathcal{N}(\mathcal{F}_w(\lambda^*,0))$ be given.  By evenness, we can express $\varphi$ via a cosine expansion:
\[ \varphi(q,p) = \sum_{n=0}^\infty \varphi_n(p) \cos{(nq)}.\]
It follows that, 
\[ \mathcal{F}_w(\lambda^*,0)(\varphi_n(p) \cos{(nq)}) = 0, \qquad n \geq 0.\]
Equivalently, we must have that $\varphi_n$ solves \eqref{idealsturmliouvilleode}.  By Lemma \ref{dispersionlemma} and the definition of $\lambda^*$, we know that $\varphi_1$ is nontrivial and that $\varphi_n$ vanishes identically for $n \neq 1$.  We have therefore shown that $\mathcal{N}(\mathcal{F}_w(\lambda^*,0))$ is one-dimensional, and spanned by $\varphi^* := \varphi_1$, the unique solution to \eqref{idealsturmliouvilleode} for $n = 1$.  
\qquad \end{proof}

Now that we have ascertained the dimension of the null space, the natural next step in showing that $\mathcal{F}$ is Fredholm of index $0$ is to prove that the range is the (weighted) orthogonal complement of the null space.   

\begin{remark}
For the case we are studying where the air is irrotational, this is not particularly difficult if one takes the following approach.  To study the solvability of $\mathcal{F}_w(\lambda^*,0) \varphi = \mathcal{A}$ for $\mathcal{A} \in Y$, we may project onto the individual modes by expanding $\varphi(q,p) = \sum_n \varphi_n(p) \cos{(nq)}$.  Doing so, we see that $\varphi_n$ solves an inhomogeneous version of the linearized problem \eqref{idealsturmliouvilleode}.  When $n \geq 1$, the problem can be viewed simply as two second-order ODEs with Dirichlet boundary conditions on the top and bottom, which must then be matched so that the jump condition on the interface is satisfied.  The fact that the range has codimension 1 will arise as from this matching, and classical existence theory for ODEs.  As before, the zero mode is must be dealt with using the fact for $\lambda_*$ there is no 0 eigenvalue.    This procedure gives solutions of class $C^2$ away from the interface; the proof that the solution is in $X$ follows from elliptic regularity, as described in Theorem \ref{ellipticregtheorem}.

In \S\ref{liddedvorticalsection}, however, we allow the atmosphere to be rotational, and one of the effects of the vorticity is to make the linearized problem corresponding to \eqref{idealsturmliouvilleode} onerous to solve explicitly.  With that in mind, it seems that a better adapted approach is to avoid separating variables, relying instad on purely PDE existence theory.  Done this way, the proof of the range lemma is nearly identical in both regimes, and so this is the manner in which we have chosen to present it here.    \end{remark}

Even using PDE methods, though, the zero mode is somewhat special, since it is only there where the nonlocal operator $d$ can be seen.  For that reason, we will still wish to begin by projecting elements of $X$ and $Y$ onto their zero modes.  We adopt the following notation,
\[ (\mathbb{P}g)(\cdot) := \frac{1}{2\pi} \int_{-\pi}^{\pi} g(q,\cdot) \, dq, \qquad \textrm{for any } g \in X, Y_1, Y_2, Y_3, Y_4, \textrm{ or } Y_5.\]

\begin{lemma}[Range] \label{idealrangelemma} $\mathcal{A} = (\mathcal{A}_1, \mathcal{A}_2, \mathcal{A}_3, \mathcal{A}_4) \in Y$ is in the range of $\mathcal{F}_w(\lambda^*,0)$ if and only if it satisfies the following orthogonality condition
\be \int\!\!\!\int_{D_1} a^3 \mathcal{A}_1 \varphi^* \, dq \, dp + \int\!\!\!\int_{D_2} a^3 \mathcal{A}_2 \varphi^* \, dq \, dp + \frac{1}{2} \int_{I} \mathcal{A}_3 \varphi^* \, dq + \int_T a^3 \mathcal{A}_4 \varphi_p^* \, dq = 0. \label{idealrangecondition} \ee  
\end{lemma}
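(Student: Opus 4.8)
The plan is to prove the two implications of the lemma separately: the necessity of \eqref{idealrangecondition} is a weighted integration-by-parts identity, while its sufficiency comes from the Fredholm theory for the transmission problem encoded by $\mathcal{F}_w(\lambda^*,0)$. For necessity, suppose $\mathcal{F}_w(\lambda^*,0)\varphi = \mathcal{A}$ for some $\varphi \in X$. Abbreviating $a := a_{\lambda^*} = H_p^{-1}$, which is constant on each of $D_1$ and $D_2$, I would multiply the interior equations $\mathcal{F}_{iw}(\lambda^*,0)\varphi = \varphi^{(i)}_{pp} + H_p^2\varphi^{(i)}_{qq} = \mathcal{A}_i$ by $a^3\varphi^*$, integrate over $D_1$ and $D_2$ separately, and integrate by parts twice in $p$ and twice in $q$ (the $q$-boundary terms vanish by $L$-periodicity, $a$ passes freely through $\partial_p$, and the integrations by parts are legitimate because $\varphi$, $\varphi^*$ and their first derivatives extend continuously to $I$ from each side). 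Since $\varphi^*$ solves the homogeneous interior equation, the surviving interior volume term $\iint a^3(\varphi^*_{pp} + H_p^2\varphi^*_{qq})\varphi$ vanishes, leaving only the $p$-boundary contributions $\bigl[a^3(\varphi^*\varphi_p - \varphi^*_p\varphi)\bigr]$ on $T$, on $B$, and on the two faces of $I$. The $B$-terms vanish since $\varphi \equiv 0$ there ($\varphi \in X$) and $\varphi^* \equiv 0$ there ($M_1^*(p_0) = 0$). The two $I$-faces combine, by continuity of $\varphi$ and $\varphi^*$, into $\varphi|_I\jump{H_p^{-3}\varphi^*_p} - \varphi^*|_I\jump{H_p^{-3}\varphi_p}$, and substituting the homogeneous jump relation $\jump{H_p^{-3}\varphi^*_p} = g\jump{\rho}\varphi^*|_I$ together with the inhomogeneous one $\jump{H_p^{-3}\varphi_p} = \tfrac12\mathcal{A}_3 + g\jump{\rho}\varphi|_I$ (both read off from $\mathcal{F}_{3w}$) collapses this to $-\tfrac12\mathcal{A}_3\varphi^*$ on $I$. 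On $T$ the contribution is $-a^3\varphi^*_p\varphi$ because $\varphi^*|_T = 0$; inserting $\varphi|_T = \mathcal{A}_4 + d(\varphi)$ from $\mathcal{F}_{4w}$ and noting that the constant $d(\varphi)$ integrates against $\varphi^*_p|_T = (M_1^*)'(0)\cos q$ to zero over a period leaves $-a^3\varphi^*_p\mathcal{A}_4$ on $T$. Collecting terms and rearranging is exactly \eqref{idealrangecondition}.

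For sufficiency, let $\mathcal{A} \in Y$ satisfy \eqref{idealrangecondition}; I want $\varphi \in X$ with $\mathcal{F}_w(\lambda^*,0)\varphi = \mathcal{A}$. Since $\mathcal{F}_w(\lambda^*,0)$ has $q$-independent coefficients it commutes with the mean projection $\mathbb{P}$, so I would first solve the zero mode: $\mathbb{P}\varphi$ must satisfy two linear second-order ODEs on $(p_0,p_1)$ and $(p_1,0)$, matched at $p_1$ by continuity and the jump relation, with $\mathbb{P}\varphi(p_0) = 0$ and the nonlocal condition $\mathbb{P}\varphi(0) - \mathbb{P}\varphi(p_1) = \mathbb{P}\mathcal{A}_4$ at $T$. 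By Lemma \ref{dispersionlemma} and Lemma \ref{idealloclambda0lemma}, $\lambda^* = \lambda_1^* \neq \lambda_0$, so the homogeneous zero-mode problem has only the trivial solution; hence this two-point boundary value problem is uniquely solvable for every right-hand side, imposing no constraint on $\mathbb{P}\mathcal{A}$ --- as it must be, since $\mathbb{P}\mathcal{A}$ does not enter \eqref{idealrangecondition}, $\varphi^*$ being a pure $\cos q$ mode. Subtracting $\mathcal{F}_w(\lambda^*,0)(\mathbb{P}\varphi)$ from $\mathcal{A}$ reduces matters to data of zero $q$-mean, i.e.\ to solving on the subspace $X^\perp := \{\varphi \in X : \mathbb{P}\varphi \equiv 0\}$ (and its counterpart $Y^\perp \subset Y$); there $d$ is invisible, so the condition on $T$ becomes the ordinary Dirichlet condition $\varphi|_T = \mathcal{A}_4$, and the problem is a genuine elliptic transmission problem: the uniformly elliptic operator $\partial_p^2 + H_p^2\partial_q^2$ in each rectangle (with $H_p$ piecewise constant), Dirichlet data on $T$ and $B$, and on $I$ the transmission conditions ``$\varphi$ continuous, $2\jump{H_p^{-3}\varphi_p} - 2g\jump{\rho}\varphi = \mathcal{A}_3$''.

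To treat this, I would isolate the model operator $\mathbb{M}\varphi := \bigl(\varphi^{(1)}_{pp} + H_p^2\varphi^{(1)}_{qq},\ \varphi^{(2)}_{pp} + H_p^2\varphi^{(2)}_{qq},\ 2\jump{H_p^{-3}\varphi_p},\ \varphi|_T\bigr)$ and show it is an isomorphism $X^\perp \to Y^\perp$: injectivity from the energy identity obtained by pairing $\mathbb{M}\varphi = 0$ with $H_p^{-3}\varphi$ and integrating over $D$ (the interface term drops by continuity of $\varphi$ and $\jump{H_p^{-3}\varphi_p} = 0$, the $T$- and $B$-terms drop by the Dirichlet conditions, forcing $\nabla\varphi \equiv 0$ hence $\varphi \equiv 0$), and Fredholmness with index zero from the elliptic regularity theory of Theorem \ref{ellipticregtheorem} (the Dirichlet and flux-matching conditions satisfy the complementing condition, and the problem carries a symmetric bilinear-form structure, so its index vanishes). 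Then $\mathcal{F}_w(\lambda^*,0)|_{X^\perp} = \mathbb{M} + K$ with $K\varphi := (0,0,-2g\jump{\rho}\varphi|_I,0)$ compact, since the trace $C^{1+\alpha}_{\textrm{per}}(\overline{D_i}) \to C^{\alpha}_{\textrm{per}}(I)$ is compact; hence $\mathcal{F}_w(\lambda^*,0)|_{X^\perp}$ is Fredholm of index zero. Its null space is one-dimensional by Lemma \ref{idealnullspacelemma}, so its range is closed of codimension one; the necessity direction already confines this range to the closed, codimension-one, proper subspace cut out by \eqref{idealrangecondition} (proper because the functional there does not annihilate, say, $(\varphi^*,\varphi^*,0,0)$), and two closed codimension-one subspaces, one containing the other, must coincide. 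Adding $\mathbb{P}\varphi$ back gives a solution in $X$, whose full regularity is supplied by Theorem \ref{ellipticregtheorem}.

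The point I expect to be the main obstacle is the elliptic input for the model operator $\mathbb{M}$: verifying the complementing (Lopatinskii--Shapiro) conditions for the coupled Dirichlet/flux-matching problem, so as to conclude both that $\mathbb{M}$ is Fredholm of index zero in the appropriate H\"older scale and that its solutions have the stated regularity up to the interface $I$. The energy estimate gives injectivity almost for free, and once the Fredholm-index-zero statement is in hand the rest is a routine deployment of the Fredholm alternative against the necessity identity; but that elliptic theory, routed through Theorem \ref{ellipticregtheorem}, is the technical heart of the argument.
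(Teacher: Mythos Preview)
Your necessity argument is essentially identical to the paper's: the same weighted integration by parts against $a^3\varphi^*$, the same cancellations on $B$, the same use of the homogeneous and inhomogeneous jump relations on $I$, and the same treatment of the $T$-contribution via $\varphi|_T = \mathcal{A}_4 + d(\varphi)$ and the vanishing $q$-mean of $\varphi^*_p|_T$.

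Your sufficiency argument is correct but genuinely different in strategy from the paper's. After the common step of solving the zero mode (where both you and the paper invoke $\lambda^* \neq \lambda_0$), the paper does not proceed via an abstract compact-perturbation argument. Instead it introduces a one-parameter family of approximate operators $\mathcal{L}^{(\epsilon)}$ obtained by adding $\epsilon$ to the interior part, uses method of continuity plus Theorem~\ref{ellipticregtheorem} to show $\mathcal{L}^{(\epsilon)}$ is invertible for a sequence $\epsilon_n \to 0$, and then proves a uniform $C^{1+\alpha}$ bound on the approximate solutions $\varphi^{(n)}$ by a blow-up argument: if the norms diverged, the normalized sequence would subconverge to a nontrivial element of the null space, i.e.\ a multiple of $\varphi^*$, but the orthogonality condition \eqref{idealrangecondition} forces each $\varphi^{(n)}$ to be $L^2(a^3)$-orthogonal to $\varphi^*$, a contradiction. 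A compactness extraction then yields the desired preimage.

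Your route---show the model operator $\mathbb{M}$ (with the zeroth-order interface term removed) is an isomorphism via an energy identity plus Theorem~\ref{ellipticregtheorem}, observe that $\mathcal{F}_w(\lambda^*,0)|_{X^\perp} = \mathbb{M} + K$ with $K$ compact, conclude Fredholm of index zero, and then match codimensions against the necessity hyperplane---is cleaner and more in the spirit of the standard Fredholm alternative. The paper's approximation scheme is more hands-on and arguably more robust when the ``model plus compact'' splitting is less transparent (indeed the paper remarks that it chose the PDE route with an eye toward the rotational case of \S\ref{liddedvorticalsection}), but in the present setting your argument gets to the conclusion with less machinery. The technical input you flag as the main obstacle---that Theorem~\ref{ellipticregtheorem} delivers the Fredholm property for the transmission problem---is exactly what the paper leans on as well.
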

\begin{proof}  We begin by demonstrating necessity.  Suppose that $\mathcal{A} \in \mathcal{R}(\mathcal{F}_w(\lambda^*,0))$.  We may therefore let $\varphi$ be given such that $\mathcal{F}_w(\lambda^*,0)\varphi = \mathcal{A}$. 
It follows that 
\be \begin{split}  
\left( a^3 \varphi^*, \mathcal{A}_1 \right)_{L^2(D_1)} + \left(a^3 \varphi^*, \mathcal{A}_2\right)_{L^2(D_2)}  & = \int\!\!\!\int_{D_1} a^3 \left( \varphi_{pp} + H_p^2 \varphi_{qq} \right) \varphi^* \, dq \, dp \\
& \qquad + \int\!\!\!\int_{D_2} a^3 \left( \varphi_{pp} + H_p^2 \varphi_{qq} \right)  \varphi^* \, dq \, dp \\
& = - \int\!\!\!\int_{D_1}  a^3 \varphi_{p}  \varphi_p^* \, dq \, dp \\
& \qquad - \int\!\!\!\int_{D_2}  a^3 \varphi_{p}  \varphi_p^* \, dq \, dp \\
& \qquad  - \int_I \jump{a^3 \phi_p \varphi^*} \, dq \\
& \qquad\qquad  + \int\!\!\!\int_{D_1 \cup D_2}  a \varphi  \varphi_{qq}^* \, dq \, dp.  \end{split} \label{idealrangecomputation} \ee
Here we have exploited periodicity and the fact that $\varphi^*$ vanishes identically on $T$.  Continuing with the computation,
\begin{align*} \left( a^3 \varphi^*, \mathcal{A}_1 \right)_{L^2(D_1)} + \left(a^3 \varphi^*, \mathcal{A}_2\right)_{L^2(D_2)} & = \int\!\!\!\int_{D_1 \cup D_2} \left(a^3 \varphi_{pp}^* + a  \varphi_{qq}^*\right) \varphi \, dq\, dp \\
& \qquad + \int_I \left(\jump{a^3 \varphi_p^* \varphi} - \jump{a^3 \varphi_p \varphi^*} \right) \, dq  \\
& \qquad - \int_T a^3 \mathcal{A}_4 \varphi_p^* \, dq - d(\varphi) \int_T a^3 \varphi_p^* \, dq \\
& = \int_I  g\jump{\rho} \varphi^* \varphi \, dq \\
& \qquad + \int_I \left(-\frac{1}{2} \mathcal{A}_3 - g\jump{\rho} \varphi \right) \varphi^*  \, dq \\
& \qquad\qquad - \int_T a^3 \mathcal{A}_4 \varphi_p^* \, dq. 
\end{align*}
Simplifying, we see that, indeed, the orthogonality relation \eqref{idealrangecondition} must be satisfied.  The proof of necessity is complete.

The next (more difficult step) is to show that \eqref{idealrangecondition} is sufficient.  Suppose now that $\mathcal{A}$ satisfies \eqref{idealrangecondition}, we wish to prove that there exists $\varphi \in X$ such that $\mathcal{F}_w(\lambda^*,0)\varphi = \mathcal{A}$.  First we consider the zero mode problem found by applying the projection $\mathbb{P}$ to the equation, which gives 
\be \label{idealrangezeromodeprob} \begin{split}  \overline{\mathcal{A}}_1 & =  \partial_p^2 \overline{\varphi} \qquad \textrm{for } p_1 < p < 0,  \\
\overline{\mathcal{A}}_2 & =  \partial_p^2 \overline{\varphi} \qquad \textrm{for } p_0 < p < p_1, \\
\overline{\mathcal{A}}_3 & = 2 \jump{a^3 \overline{\varphi}_p} - 2g \jump{\rho} \overline{\varphi}, \\
\overline{\mathcal{A}}_4 & = \left(\overline{\varphi} - d(\overline{\varphi})\right)_T, \\
\end{split} \ee
where 
\[ \overline{\varphi} := \mathbb{P} \varphi, \qquad \overline{\mathcal{A}}  = (\overline{\mathcal{A}}_1, \overline{\mathcal{A}}_2, \overline{\mathcal{A}}_3, \overline{\mathcal{A}}_4) := (\mathbb{P} \mathcal{A}_1, \mathbb{P} \mathcal{A}_2, \mathbb{P} \mathcal{A}_3, \mathbb{P} \mathcal{A}_4).\]
Hence, 
\be \begin{split} \overline{\varphi}^{(1)}(p) &= \int_{p_1}^p \int_{p_1}^r \overline{\mathcal{A}}_1(s) \, ds \, dr + C_1 p + C_2,\\
 \overline{\varphi}^{(2)}(p) &= \int_{p_0}^p \int_{p_0}^r \overline{\mathcal{A}}_2(s) \, ds \, dr + C_3 (p-p_0),\end{split} \label{idealrangezeromodesol} \ee
for some constants $C_1, C_2$ and $C_3$.  The condition on $T$ tells us that 
\be - C_1 p_1 = \overline{\mathcal{A}}_4 - \int_{p_1}^0 \int_{p_1}^p \overline{\mathcal{A}}_1(r) \, dr \, dp.\label{idealrangezeromodeTcond} \ee
Continuity across the interface implies
\be C_1 p_1 + C_2 - (p_1-p_0) C_3 = \int_{p_0}^{p_1} \int_{p_0}^p \overline{\mathcal{A}}(r) \, dr \, dp.  \label{idealzeromodecontcond} \ee
Lastly, the jump condition equates requires that
\be \Gamma_{\textrm{rel}}^3 C_1 - (\lambda^*)^3\left( \int_{p_0}^{p_1} \overline{\mathcal{A}}_2(p) \, dp + C_3 \right) - g \jump{\rho} (C_1 p + C_2) = \frac{1}{2}\overline{\mathcal{A}}_3. \label{idealzeromodejumpcond} \ee
Collecting together \eqref{idealrangezeromodeTcond}, \eqref{idealzeromodecontcond}, and \eqref{idealzeromodejumpcond}, an easy calculation reveals that unique solvability of the zero mode problem is equivalent to 
\[ -g\jump{\rho} (p_1-p_0) \neq (\lambda^*)^3.\] 
But this is just the statement $\lambda^* \neq \lambda_0$, which is proved in Lemma \ref{idealloclambda0lemma}.  We have therefore shown that the zero-mode problem has a unique solution.  

Returning to the question of solvability of the full problem, we observe that, in light of the previous analysis,  it suffices to assume 
\[ \mathcal{A} \in Y_0 := (1-\mathbb{P}) Y =  \{ \mathcal{B} \in Y : \mathbb{P}\mathcal{B} = 0\},\]
 and to solve in the space 
 \[ \varphi \in X_0 := (1- \mathbb{P}) X = \{ \phi \in X : \mathbb{P} \phi = 0\}.\]
 In fact, this means that we may take $d(\varphi) = 0$.  
 
 We shall approach the question of solvability incrementally.  Fix $\epsilon > 0$, and define $\mathcal{L}^{(\epsilon)}:X_0 \to Y_0$ by
\[ \mathcal{L}^{(\epsilon)} = ( \epsilon - \mathcal{F}_{1w}(\lambda^*,0), \, \epsilon - \mathcal{F}_{2w}(\lambda^*,0), \,  -\mathcal{F}_{3w}(\lambda^*,0), \,  -\mathcal{F}_{4w}(\lambda^*,0)).\]
First consider  the approximate problem:
\be \mathcal{L}^{(\epsilon)} \varphi^{(\epsilon)} = \mathcal{A}.  \label{idealrangeapproxprob} \ee 

{\em Claim 1.}  For a sequence of $ \epsilon > 0$ tending to $0$, there exists a unique solution $\varphi^{(\epsilon)}$ to \eqref{idealrangeapproxprob}.  To see this, note that by a method of continuity argument, the solvability of \eqref{idealrangeapproxprob} is equivalent to that of the equation 
\[ \widetilde{\mathcal{L}^{(\epsilon)}} \varphi^{(\epsilon)} = \mathcal{A},\]
where
\[ \widetilde{\mathcal{L}^{(\epsilon)}}_i := \mathcal{L}^{(\epsilon)}_i, \textrm{ for } i \neq 3, \qquad \widetilde{\mathcal{L}^{(\epsilon)}}_3 \varphi^{(\epsilon)}  := 2 \jump{a^3 \varphi^{(\epsilon)}_p}.\]
That is, we may safely ignore the zero-th order term on the interface.   Let $\xi \in C_{\textrm{per}}^{\alpha}(\overline{D}) \cap C_{\textrm{per}}^{2+\alpha}(\overline{D_1} ) \cap C_{\textrm{per}}^{2+\alpha}(\overline{D_2}) $ be any function that exhibits the following properties: 
\[ \xi|_B = 0, \qquad \jump{a^3 \xi_p} = -\mathcal{A}_3, \qquad \xi|_T  = -\mathcal{A}_4, \qquad \textrm{and}\qquad \mathbb{P} \xi = 0. \]
Then, the solvability of \eqref{idealrangeapproxprob} is equivalent to that of 
\be \widetilde{\mathcal{L}^{(\epsilon)}} \widetilde{\varphi}^{(\epsilon)} = \widetilde{\mathcal{A}},\label{idealrangeapproxprob2} \ee
where $\widetilde{\varphi}^{(\epsilon)} := \varphi^{(\epsilon)} - \xi$, and 
\[ \widetilde{\mathcal{A}} := (\mathcal{A}_1 - \mathcal{L}_1^{(\epsilon)} \xi, \, \mathcal{A}_2 - \mathcal{L}_2^{(\epsilon)} \xi, \, 0, \,  0).\]
The Fredholm solvability of \eqref{idealrangeapproxprob2} follows from linear elliptic theory (cf. Theorem \ref{ellipticregtheorem}); we therefore need only to prove that the homogeneous problem associated with $\widetilde{\mathcal{L}^{(\epsilon)}}$ has no nontrivial solutions.   Suppose that $\phi$ solves the homogeneous problem for some fixed $\epsilon > 0$.  By evenness and the fact that $\mathbb{P} \phi = 0$, we can expand $\phi$ in a cosine series of the form 
\[ \phi(q,p) = \sum_{n = 1}^\infty \phi_n(p) \cos{(nq)} .\]
Applying the operator $\widetilde{\mathcal{L}^{(\epsilon)}}$ to $\phi$ reveals that $\phi_n$ satisfies the eigenvalue problem
\[ \partial_p^2 \phi_n  = (\epsilon + n^2 H_p^2) \phi_n,  \qquad 
\jump{a^3 \partial_p \phi_n} =  \phi_n(p_0) = \phi_n(0) = 0.\]
This is a consequence of the fact that $d(\phi_n) = 0$, as $n \geq 1$.  This equation can be easily solved explicitly, and one can readily see that, for generic $\epsilon$ small, there are no nontrivial solutions for any $n \geq 1$.  We omit the details in the interest of brevity.  The first claim is proved.  \\

We may therefore let $\epsilon_n$ be a positive sequence converging to $0$ as in Claim 1, and consider the corresponding sequence of solutions to \eqref{idealrangeapproxprob}, call them $\{ \varphi^{(n)} \}$. \\
  
 {\em Claim 2.}  $\{ \varphi^{(n)} \}$ is bounded uniformly in $C_{\textrm{per}}^{1+\alpha}(\overline{D_1}) \cap  C_{\textrm{per}}^{1+\alpha}(\overline{D_2})$.   We argue by contradiction.  Suppose that $\{ \varphi^{(n)}\}$ is not bounded uniformly.  Possibly by passing to a subsequence, we may suppose that $\|\varphi^{(n)}\|_{C_{\textrm{per}}^{1+\alpha}(\overline{D_1})} + \|\varphi^{(n)}\|_{C_{\textrm{per}}^{1+\alpha}(\overline{D_2})}  \to \infty$.  Let $\phi^{(n)} := \varphi^{(n)}/ (\|\varphi^{(n)}\|_{C_{\textrm{per}}^{1+\alpha}(\overline{D_1})} + \|\varphi^{(n)}\|_{C_{\textrm{per}}^{1+\alpha}(\overline{D_2})})$.  Thus $\{ \phi^{(n)}\}$ has unit $C_{\textrm{per}}^{1+\alpha}(\overline{D_1}) \cap C_{\textrm{per}}^{1+\alpha}(\overline{D_2}) $-norm, and, by linearity, is a solution to 
 \[ \mathcal{L}^{(\epsilon_n)} \phi^{(n)} = \frac{\mathcal{A}}{\|\varphi^{(n)}\|_{C_{\textrm{per}}^{1+\alpha}(\overline{D_1})} + \|\varphi^{(n)}\|_{C_{\textrm{per}}^{1+\alpha}(\overline{D_2})}} =: \mathcal{A}^{(n)}.\]
 Since $\mathcal{A}^{(n)}$ converge to 0 in $C_{\textrm{per}}^{1+\alpha}(\overline{D_1}) \cap  C_{\textrm{per}}^{1+\alpha}(\overline{D_2})$, we may extract a subsequence $\{\phi^{(n_k)}\}$ converging to $\phi \in C^2(\overline{D} \setminus I) \cap C^1(\overline{D_1}) \cap C^1(\overline{D_2})$, a classical solution of
 \[ \mathcal{F}_w(\lambda^*,0) \phi = 0.\]
 This is achieved by using Schauder-type estimates for the approximate problem, and the compactness of the embedding of $C^{k+\alpha}$ into $C^k$ on bounded domains.  From Lemma \ref{idealnullspacelemma} it follows that $\phi = \nu \varphi^*$, for some $\nu \in \mathbb{R}$.  Finally, we note that it must be the case that $\|\phi\|_{C_{\textrm{per}}^{1+\alpha}(\overline{D_1})} + \|\phi\|_{C_{\textrm{per}}^{1+\alpha}(\overline{D_2})} = 1$, and hence $\nu \neq 0$. 
  
 Recall that, by definition the of $\varphi^{(n)}$,  we have 
 \[ a^3 \varphi^* \mathcal{L}^{(\epsilon_n)} \varphi^{(n)} = a^3 \varphi^* \mathcal{A}. \]
 Integrating over $D$, we obtain 
 \begin{align*}
 \left( a^3 \varphi^*, \mathcal{A}_1 \right)_{L^2(D_1)} + \left( a^3 \varphi^*, \mathcal{A}_2 \right)_{L^2(D_2)}
& = \int\!\!\!\int_{D_1} a^3 \varphi^* (\epsilon_n \phi^{(n)} - \phi_{pp}^{(n)} - H_p^2 \phi_{qq}^{(n)} ) \, dq\, dp \\
 & + \int\!\!\!\int_{D_2} a^3\varphi^* (\epsilon_n \varphi^{(n)} - \varphi_{pp}^{(n)} - H_p^2 \phi_{qq}^{(n)} ) \, dq\, dp \\
 & =  \epsilon_n \int\!\!\!\int_{D_1 \cup D_2}a^3 \varphi^* \varphi^{(n)}  \, dq \, dp \\
 & \qquad + \int_I \left( \jump{a^3 \varphi^{(n)}  \varphi^*_p} - \jump{a^3 \varphi_p^{(n)}  \varphi^*} \right) \, dq  \\
 & \qquad \qquad + \int_T a^3 \varphi_p^* \varphi^{(n)} \, dq \\
 & =  \epsilon_n \int\!\!\!\int_{D_1 \cup D_2}a^3 \varphi^* \varphi^{(n)}  \, dq \, dp \\
 & \qquad - \frac{1}{2} \int_I a^3 \mathcal{A}_3 \varphi^* \, dq - \int_T a^3 \mathcal{A}_4 \varphi_p^* \, dq.
\end{align*}
By the orthogonality condition \eqref{idealrangecondition}, all the terms involving $\mathcal{A}$ cancel, leaving only the statement that 
\[ \int\!\!\!\int_{D_1 \cup D_2} a^3 \varphi^* \varphi^{(n)} \, dq\, dp = 0 \qquad \textrm{for all } n \geq 1.\]
As an immediate consequence, we have 
\[ \int\!\!\!\int_{D_1 \cup D_2} a^3 \varphi^* \phi \, dq\, dp = 0, \]
which contradicts the fact that $\phi = \nu \varphi^*$.  This completes the proof of the second claim.  \\

Up to this point we have shown that there exists solutions $\{ \varphi^{(n)} \}$ to \eqref{idealrangeapproxprob} for a sequence of $\epsilon_n \to 0$ that are uniformly bounded in $C^{1+\alpha}$.  By passing to a subsequence, we have that there exists a weak solution to the original problem $\mathcal{F}_w(\lambda^*,0) \varphi = \mathcal{A}$.  Elliptic regularity ensures that the weak solution is actually strong, and, in particular, is an element of $X$ (cf. Theorem \ref{ellipticregtheorem}).  It follows that $\mathcal{A}$ is in the range.  The proof of the lemma is complete. \qquad \end{proof}

\begin{lemma}[Transversality] \label{idealtransverselemma} The following transversality condition holds:
\be \label{idealtranvscond} \mathcal{F}_{\lambda w}(\lambda^*,0) \varphi^* \not\in \mathcal{R}(\mathcal{F}_w(\lambda^*,0)). \ee
Here $\varphi^*$ denotes a generator of $\mathcal{N}(\mathcal{F}_w(\lambda^*,0))$.  
\end{lemma}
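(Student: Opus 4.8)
The plan is to verify the transversality condition \eqref{idealtranvscond} by combining the Fréchet derivative formulas \eqref{idealFw} with the range characterization from Lemma \ref{idealrangelemma}. Concretely, I would first compute $\mathcal{F}_{\lambda w}(\lambda^*,0)\varphi^*$ explicitly. Differentiating the expressions in \eqref{idealF} with respect to $\lambda$ commutes with differentiation in $w$, and the $\lambda$-dependence enters only through $H = H(\cdot;\lambda)$ (equivalently through $a_\lambda$, which equals $\lambda$ on $(p_0,p_1)$ and is $\lambda$-independent on $(p_1,0)$) and through $Q = Q(\lambda)$. Thus $\mathcal{F}_{1\lambda w}(\lambda^*,0)\varphi^*$ vanishes (the air region sees no $\lambda$), $\mathcal{F}_{2\lambda w}(\lambda^*,0)\varphi^*$ picks up a term proportional to $\partial_\lambda(H_p^2)\,\varphi^*_{qq} = 2 H_p (\partial_\lambda H_p)\varphi^*_{qq}$ supported in $D_2$, the interface term $\mathcal{F}_{3\lambda w}(\lambda^*,0)\varphi^*$ picks up $2\partial_\lambda\big(\jump{H_p^{-3}\varphi^*_p}\big) = -6 \lambda^{-4}(\partial_\lambda\lambda)\,(\text{water side})$ plus possibly a contribution from $\partial_\lambda Q$, and $\mathcal{F}_{4\lambda w}(\lambda^*,0)\varphi^* = 0$ since $\mathcal{F}_4$ is linear in $w$ with $\lambda$ entering only through the (constant-in-$w$) laminar terms. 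Getting all the signs and the $\partial_\lambda H_p$ factors right here is the bookkeeping step.

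Next I would plug the resulting quantity $\mathcal{B} := \mathcal{F}_{\lambda w}(\lambda^*,0)\varphi^*$ into the left-hand side of the orthogonality relation \eqref{idealrangecondition} and show it is nonzero; by Lemma \ref{idealrangelemma} this is exactly the statement $\mathcal{B}\notin\mathcal{R}(\mathcal{F}_w(\lambda^*,0))$. The computation reduces to integrals over $D_2$ and $I$ of $\varphi^* = M_1^*(p)\cos q$ against the derivatives of $M_1^*$, using the explicit eigenfunction formula \eqref{idealMnformula} and the dispersion relation \eqref{idealdispersionrelation} with $n=1$. I expect that, after an integration by parts in $p$ on the $D_2$-term (to move a $\partial_p$ off $\partial_\lambda H_p$), everything collapses to a single expression that is essentially $\frac{d}{d\lambda}$ of the linearized dispersion functional evaluated at $\lambda^*$ — that is, the transversality integral equals a nonzero multiple of $\frac{d}{d\lambda}\big[\lambda^2\coth(n(p_1-p_0)/\lambda)\big]\big|_{\lambda=\lambda^*}$. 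Since Lemma \ref{dispersionlemma} established that this map is strictly increasing in $\lambda$, its derivative at $\lambda^*$ is strictly positive, which forces \eqref{idealrangecondition} to fail and gives the claim.

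The main obstacle I anticipate is not any single hard estimate but rather organizing the computation so that the "miracle" is visible: namely, recognizing that the quantity one obtains by pairing $\mathcal{F}_{\lambda w}(\lambda^*,0)\varphi^*$ with $\varphi^*$ in the weighted inner product from \eqref{idealrangecondition} is precisely $\partial_\lambda$ of the spectral condition that defines $\lambda^*$. An alternative, cleaner route that avoids some of the $\partial_\lambda H_p$ gymnastics is the following abstract argument: since $\mathcal{F}(\lambda,0)\equiv 0$ for all $\lambda$ by Lemma \ref{ideallaminarlemma}, differentiating gives $\mathcal{F}_\lambda(\lambda,0)\equiv 0$, hence $\mathcal{F}_{\lambda w}(\lambda^*,0)$ can be related to $\frac{d}{d\lambda}$ of the family of operators $\mathcal{F}_w(\lambda,0)$; transversality is then equivalent to the eigenvalue $0$ of $\mathcal{F}_w(\lambda,0)$ (realized via the $n=1$ mode $M^*$) crossing with nonzero speed as $\lambda$ passes through $\lambda^*$. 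That crossing speed is governed by implicit differentiation of the dispersion relation \eqref{idealdispersionrelation}, and strict monotonicity of $\lambda\mapsto\lambda^2\coth(n(p_1-p_0)/\lambda)$ — already proved in Lemma \ref{dispersionlemma} — guarantees it is nonzero. I would present whichever of these two is shorter, but both rest on the same underlying fact, so the plan is robust.
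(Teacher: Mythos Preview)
Your plan is sound and begins exactly as the paper does: compute the components of $\mathcal{F}_{\lambda w}(\lambda^*,0)\varphi^*$ (noting $\mathcal{F}_{1\lambda w}=\mathcal{F}_{4\lambda w}=0$ since the air block and the lid condition carry no $\lambda$-dependence at the linearized level), then test against $\varphi^*$ in the weighted pairing from Lemma~\ref{idealrangelemma}. Two small corrections to your bookkeeping: $Q$ enters $\mathcal{F}_3$ additively, so $\mathcal{F}_{3w}$ and hence $\mathcal{F}_{3\lambda w}$ see no contribution from $\partial_\lambda Q$; and $\partial_\lambda H_p = -1/\lambda^2$ is constant in $p$ on $D_2$, so there is no $\partial_p$ to move off it by integration by parts.

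Where you diverge from the paper is in the endgame. The paper, after reducing the pairing to
\[
\Xi \;=\; \tfrac{1}{2}\!\int\!\!\!\int_{D_2}(\varphi^*)^2\,dq\,dp \;-\; \tfrac{3}{2}(\lambda^*)^2\!\int\!\!\!\int_{D_2}(\varphi_p^*)^2\,dq\,dp,
\]
simply inserts the explicit formula $(\varphi^*)^{(2)}(p)=\mu\sinh\!\big((p-p_0)/\lambda^*\big)$ and observes that the integrand $\sinh^2 - 3\cosh^2$ is pointwise negative, so $\Xi<0$. Your proposal instead interprets $\Xi$ as (a multiple of) $\tfrac{d}{d\lambda}\big[\lambda^2\coth((p_1-p_0)/\lambda)\big]$ and invokes the strict monotonicity already proved in Lemma~\ref{dispersionlemma}. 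The conceptual link you describe --- transversality $\Leftrightarrow$ the $n=1$ ``eigenvalue'' of $\mathcal{F}_w(\lambda,0)$ crossing zero with nonzero speed --- is correct and is in fact the mechanism behind Lemma~\ref{monotonicitynulemma} in the rotational section. However, the identification of $\Xi$ with that particular $\lambda$-derivative is not as immediate as you suggest: carrying it through cleanly requires essentially the same integration-by-parts identity the paper uses (the relation $(\lambda^*)^2(\varphi^*\varphi_p^*)_p=(\lambda^*)^2(\varphi_p^*)^2+(\varphi^*)^2$ in $D_2$), after which the explicit sign is already visible without passing through the dispersion relation. So your route is valid and more conceptual --- it explains \emph{why} transversality holds and would survive in settings where the eigenfunction is not explicit --- but here, where $(\varphi^*)^{(2)}$ is a single $\sinh$, the paper's direct substitution is shorter.
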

\begin{proof} In view of Lemma \ref{idealrangelemma}, it suffices to show that $\mathcal{A} := \mathcal{F}_{\lambda w}(\lambda^*,0) \varphi^*$ fails to satisfy the orthogonality condition \eqref{idealrangecondition}.  That is, if we put 
\[ \Xi := \int\!\!\!\int_{D_1} a^3 \mathcal{A}_1 \varphi^* \, dq \, dp + \int\!\!\!\int_{D_2} a^3 \mathcal{A}_2 \varphi^* \, dq \, dp + \frac{1}{2} \int_{I} \mathcal{A}_3 \varphi^* \, dq + \int_T a^3 \mathcal{A}_4 \varphi_p^* \, dq,\]
we must prove $\Xi \neq 0$. To do this, we first compute
\begin{align*}
\mathcal{F}_{1\lambda w}(\lambda^*,0) \varphi^* & = 0, \\
 \mathcal{F}_{2\lambda w}(\lambda^*,0) \varphi^* & = -\frac{2}{(\lambda^*)^3}  \varphi_{qq}^*, \\
 \mathcal{F}_{3\lambda w}(\lambda^*,0) \varphi^* & =  \left(-3(\lambda^*)^2 (\varphi_p^*)^{(2)} \right)_I, \\
 \mathcal{F}_{4\lambda w}(\lambda^*,0) \varphi^* & = 0.
\end{align*}
By the equation satisfied by $\varphi^*$, we see that 
\[ -\partial_q^2 \varphi^* = \varphi^* = a^2 \partial_p^2 \varphi^*.\]
Thus, 
\[ (\lambda^*)^2  ( \varphi^* \varphi_p^*)_p = (\lambda^*)^2 (\varphi_p^*)^2 + (\varphi^*)^2, \qquad \textrm{in } D_2.\]
From this we deduce,
\be \int\!\!\!\int_{D_2} a^3 \mathcal{A}_2 \varphi^* \, dq \, dp  = 2 \int\!\!\!\int_{D_2} (\varphi^*)^2  \, dq\, dp, \label{idealtrans1} \ee
and 
\be \begin{split} \frac{1}{2} \int_I \mathcal{A}_3 \varphi^* \, dq  &= -\frac{3(\lambda^*)^2}{2} \int_I (\varphi_p^*)^{(2)} \varphi^* \, dq \\
& = -\frac{3 (\lambda^*)^2}{2} \int\!\!\!\int_{D_2} (\varphi_p^*)^2 \, dq\, dp - \frac{3}{2} \int\!\!\!\int_{D_2} (\varphi^*)^2 \,dq \, dp.
\end{split} \label{idealtrans2} \ee
In light of \eqref{idealtrans1}--\eqref{idealtrans2}, and the calculated values of the remaining components of $\mathcal{A}$, 
\be \Xi = \frac{1}{2} \int\!\!\!\int_{D_2} (\varphi^*)^2 \, dq \, dp - \frac{3}{2} (\lambda^*)^2 \int\!\!\!\int_{D_2} (\varphi_p^*)^2 \, dq\, dp. \label{idealtrans3} \ee

Recall that $(\varphi^*)^{(2)}$ takes the form 
\[ \varphi^*(p) = \mu \sinh{(\frac{p-p_0}{\lambda^*})} \qquad \textrm{in } D_2,\]
for an explicit constant $\mu$.  Hence,
\[ (\lambda^*)^2 \varphi_p^*(p)^2 = \mu^2 \cosh^2{(\frac{p-p_0}{\lambda^*})} \qquad \textrm{in } D_2,\]
and thus
\[ \varphi^*(p)^2 - 3(\lambda^*)^2 \varphi_p^*(p)^2 = -\mu^2 \left(1 + \cosh^2{(\frac{p-p_0}{\lambda^*})} \right) \qquad \textrm{in } D_2.  \]
From this we conclude $\Xi < 0$.  \qquad \end{proof}

With these lemmas, Theorem \ref{ideallocalbifurcation} becomes a simple consequence of the Crandall--Rabinowitz bifurcation theorem.

\pfthm{\ref{ideallocalbifurcation}} Assuming \eqref{compatibilitycond} and \eqref{ideallbc}, we are justified in defining $\mathcal{F}$ and $\lambda^*$ as in \eqref{idealF} by Lemma\ref{ideallaminarlemma} and Lemma \ref{dispersionlemma}, respectively.  To complete the proof, we need only confirm that the hypotheses of Theorem \ref{crandallrabinowitz} are satisfied.    But parts (i) and (ii) of the Crandall--Rabinowitz theorem are obviously true.  Lemma \ref{idealnullspacelemma} and Lemma \ref{idealrangelemma} together give (iii), while (iv) was proved in Lemma \ref{idealtransverselemma}. \qquad \endproof

\section{Local bifurcation with vorticity in the atmosphere} \label{liddedvorticalsection}

Next consider the case where the density remains constant in each region, but we assume that the air region has a nontrivial vorticity strength function.  The height equation for this scenario is 
\be \left \{ \begin{array}{lll}
(1+h_q^2)h_{pp} + h_{qq}h_p^2 - 2h_q h_p h_{pq} = -\gamma(-p) h_p^3 & \textrm{in } D_1 \cup D_2, \\
& & \\
\jump{\frac{\displaystyle 1+h_q^2}{\displaystyle h_p^2}} + 2g\jump{\rho} h- Q  = 0, & \textrm{on } p = p_1, \\
& & \\
h = 0, & \textrm{on } p = p_0, \\
h = \ell + d(h), & \textrm{on } p = 0.\end{array} \right. \label{shearheighteq} \ee
Written in the new variables, the relative circulation becomes
\[ \Gamma_{\textrm{rel}} = \displaystyle \frac{1}{L} \int_{-L/2}^{L/2} \frac{1+h_q^2}{h_p} \, dq, \qquad\textrm{for } p_1 \leq p \leq 0.\]
Our main theorem is the following.

\begin{theorem}[Local bifurcation with atmospheric vorticity]  \label{shearlocalbifurcationtheorem} For given $p_1$, $\ell$, and $\gamma$, define $\Gamma_{\mathrm{rel}}$ by
\be \partial_p (\Gamma_{\mathrm{rel}}(p)^2) = 2\gamma(-p), \qquad \ell = \int_{p_1}^0 \frac{dp}{\Gamma_{\mathrm{rel}}(p)}, \label{shearcompatibilitycond} \ee
If the local bifurcation condition holds (cf. Definition \ref{lbcdef}) then there exists a continuous curve of non-laminar solutions to the height equation for irrotational flow in the water and vorticity strength function $\gamma$ in the air \eqref{shearheighteq}
\[ \mathcal{C}_{\textrm{loc}}^{\prime\prime} = \{ (Q(s),h(s)) \in \mathbb{R} \times X : |s| < \epsilon \}, \]
for $\epsilon > 0$ sufficiently small, such that $(Q(0), h(0)) = (Q(\lambda^*), H(\lambda^*))$, a laminar solution with (pseudo) relative circulation $\Gamma_{\mathrm{rel}}$, and, in a sufficiently small neighborhood of $(Q(\lambda^*), H(\lambda^*))$ in $\mathbb{R}\times X$, $\mathcal{C}_{\textrm{loc}}^\prime$ comprises all non-laminar solutions. \\
\end{theorem}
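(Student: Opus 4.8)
The plan is to follow the route of Section \ref{liddedidealsection}: recast the height equation \eqref{shearheighteq} as $\mathcal{F}(\lambda,w)=0$ for an operator $\mathcal{F}\colon \mathbb{R}\times X \to Y$ between the same Banach spaces as in \eqref{idealF}, with $\mathcal{F}(\lambda,0)=0$ along the laminar family, and then verify the four hypotheses of the Crandall--Rabinowitz theorem (Theorem \ref{crandallrabinowitz}). The one genuinely new feature is that the vorticity term $-\gamma(-p)h_p^3$ renders the laminar ODE and the separated linearized problem non-explicit, so wherever the irrotational argument used closed-form hyperbolic expressions one argues qualitatively instead; this is precisely why the range lemma below is proved by PDE methods rather than by separating variables, as anticipated in the remark following Lemma \ref{idealnullspacelemma}.

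\emph{Laminar flows.} With the ansatz $h=H(p)$, $H(0)=d(H)$, the interior equation becomes $H_{pp}=0$ in $D_2$ and $H_{pp}=-\gamma(-p)H_p^3$ in $D_1$. The first forces $H_p\equiv\lambda^{-1}$ for a parameter $\lambda>0$; dividing the second by $H_p^3$ gives $(H_p^{-2})_p=2\gamma(-p)$, and since $H_p^{-1}$ is the laminar relative circulation $\Gamma_{\mathrm{rel}}(p)$, this is exactly the first half of \eqref{shearcompatibilitycond}, while $H(0)-H(p_1)=\int_{p_1}^0 H_p\,dp=\ell$ is the second half. The jump condition on $p=p_1$ then determines $Q=Q(\lambda)=\Gamma_{\mathrm{rel}}(p_1)^2-\lambda^2+2g\jump{\rho}(p_1-p_0)/\lambda$, which is strictly concave in $\lambda$ with unique maximum at $\lambda_0$, $\lambda_0^3=-g\jump{\rho}(p_1-p_0)$. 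I would record this as the analogue of Lemma \ref{ideallaminarlemma}, noting the positivity requirement $\Gamma_{\mathrm{rel}}(p)^2>0$ on $[p_1,0]$ (a mild condition on $\gamma$) needed for $H_p>0$.

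\emph{Linearization and eigenvalue analysis.} Linearizing \eqref{shearheighteq} about $(H(\cdot;\lambda),Q(\lambda))$ and writing the resulting interior operator in divergence form, the separated problem for $m=M(p)\cos(nq)$ is the regular Sturm--Liouville problem $(a_\lambda^3 M_p)_p=n^2 a_\lambda M$ on $(p_0,p_1)\cup(p_1,0)$, $\jump{a_\lambda^3 M_p}=g\jump{\rho}M$ on $p=p_1$, $M=0$ on $p=p_0,0$, where $a_\lambda=\Gamma_{\mathrm{rel}}(p)$ in the air and $a_\lambda=\lambda$ in the water. Since the air equation is not explicitly solvable, I would encode the matching as a $\lambda$-independent air datum $G_n$ (obtained by solving the air ODE with $M(0)=0$ and evaluating $a_\lambda^3 M_p/M$ at $p_1^+$, using ODE nonoscillation to see it is finite) minus the explicit water contribution $\lambda^2\coth(n(p_1-p_0)/\lambda)$, which is strictly increasing in $\lambda$ exactly as in Lemma \ref{dispersionlemma}. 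This yields at most one $\lambda_n^*$, existing iff a sign condition holds. The local bifurcation condition of Definition \ref{lbcdef} is precisely the demand that this holds for $n=1$ and that the resulting $\lambda_1^*$ is \emph{not} matched by any $n\ge2$, so that the lowest eigenvalue $\lambda^*:=\lambda_1^*$ is simple; unlike \eqref{ideallbc}, this cannot be reduced to a single explicit inequality for general $\gamma$, which is why it is imposed as a hypothesis. I would also re-prove Lemma \ref{idealloclambda0lemma} by the same energy/Cauchy--Schwarz argument, now carrying the $\gamma$ term through the identity, to conclude $\lambda^*\neq\lambda_0$ and hence that $Q$ is a local diffeomorphism near $\lambda^*$.

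\emph{Fredholm properties, transversality, and conclusion.} The null space lemma then follows as in Lemma \ref{idealnullspacelemma}: cosine-expand $\varphi\in\mathcal{N}(\mathcal{F}_w(\lambda^*,0))$, note each mode solves the Sturm--Liouville problem above, and invoke simplicity from Definition \ref{lbcdef}, with the zero mode excluded by $\lambda^*\neq\lambda_0$. For the range lemma I would follow verbatim the PDE scheme of Lemma \ref{idealrangelemma}: project off the zero mode, solve the (explicit, uniquely solvable since $\lambda^*\neq\lambda_0$) zero-mode problem, and on the complement introduce the regularized operators $\mathcal{L}^{(\epsilon)}$, obtain solutions via the method of continuity together with linear elliptic Fredholm theory (Theorem \ref{ellipticregtheorem}), establish a uniform $C^{1+\alpha}$ bound by a contradiction-and-compactness argument against the one-dimensional kernel, and pass to the limit; the orthogonality condition analogous to \eqref{idealrangecondition}, weighted by $a^3$ with $a$ now including the air profile, comes out of integration by parts against $\varphi^*$. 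For transversality, the crucial observation is that the water region and the interface jump condition are structurally unchanged and $a_\lambda$ is $\lambda$-independent in the air, so the computation of $\mathcal{F}_{\lambda w}(\lambda^*,0)\varphi^*$ again reduces to the $D_2$-contribution exactly as in Lemma \ref{idealtransverselemma}, where $(\varphi^*)^{(2)}=\mu\sinh((p-p_0)/\lambda^*)$ forces $\Xi<0$. With (i)--(iv) of Theorem \ref{crandallrabinowitz} verified, the curve $\mathcal{C}_{\mathrm{loc}}^{\prime\prime}$ and the local-uniqueness statement follow. \emph{The main obstacle} is the pair of difficulties introduced by $\gamma$: (a) proving the lowest eigenvalue is simple and isolated with no explicit eigenfunctions available — this is exactly what forces hypothesis \eqref{lbcnu}/Definition \ref{lbcdef} in place of a closed-form inequality — and (b) pushing the regularization-and-compactness argument of the range lemma through with the genuinely variable coefficient $\gamma(-p)H_p^2$ in the interior operator, where one must check that the Schauder constants in Theorem \ref{ellipticregtheorem} remain uniform in the regularization parameter $\epsilon$.
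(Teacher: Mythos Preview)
Your overall architecture---laminar family, linearization, null space/range/transversality, then Crandall--Rabinowitz---matches the paper, and your treatment of the laminar flows, the range lemma, and transversality is essentially what the paper does. The substantive divergence, and a genuine gap, is in how you handle the simplicity of the null space.

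You propose to analyze the separated problem via a dispersion-type argument: solve the air Sturm--Liouville ODE to produce a $\lambda$-independent datum $G_n$ at $p_1^+$, match it against the explicit water contribution $\lambda^2\coth(n(p_1-p_0)/\lambda)$, and use monotonicity in $\lambda$ to get a unique $\lambda_n^*$ for each $n$. That is fine for each fixed $n$. But to prove the null space is one-dimensional you need $\lambda_n^*\neq\lambda_1^*$ for all $n\geq 2$, and you assert that this is ``precisely the demand'' of Definition~\ref{lbcdef}. It is not: the local bifurcation condition \eqref{lbcnu}/\eqref{lbcPn} only asks that $\inf_{\lambda>0}\nu(\lambda)<-1$, equivalently that the $n=1$ problem has a nontrivial solution for \emph{some} $\lambda$. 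Simplicity is not hypothesized; it is \emph{derived}, and your dispersion approach provides no mechanism for it.

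The paper closes this gap with a variational argument you do not mention. It introduces the Rayleigh quotient $\mathscr{R}(\varphi;\lambda)$ and $\nu(\lambda):=\min_{\mathscr{A}}\mathscr{R}(\cdot;\lambda)$, shows $\nu$ is strictly increasing wherever it is negative (Lemma~\ref{monotonicitynulemma}), and combines this with an upper bound (Lemma~\ref{boundRlemma}) and \eqref{lbcnu} to produce a \emph{unique} $\lambda^*$ with $\nu(\lambda^*)=-1$. Simplicity then comes for free: if some $\varphi_n$ with $n\geq 2$ solved \eqref{shearsturmliouvilleode} at $\lambda^*$, then $\mathscr{R}(\varphi_n;\lambda^*)=-n^2<-1=\nu(\lambda^*)$, contradicting minimality (this is exactly the content of Lemma~\ref{shearnullspacelemma}). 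The same variational framework also gives $\lambda^*\neq\lambda_0$ directly by showing $\mathscr{R}(\cdot;\lambda_0)\geq 0$ (Lemma~\ref{shearexistenceminimizer}), rather than by adapting the energy identity of Lemma~\ref{idealloclambda0lemma} as you propose. To repair your argument you would need either to import this Rayleigh-quotient machinery or to prove, by some other means, that $n\mapsto\lambda_n^*$ is strictly monotone---which for general $\gamma$ is not obvious from the dispersion picture alone.
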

\begin{remark}  As in Theorem \ref{ideallocalbifurcation}, we can interpret the above statement in terms of the Eulerian formulation.  The resulting statement is as follows.  Fix the period to be $2\pi$,  (pseudo) volumetric mass flux in the air region $p_1$, lid height $\ell$, and with vorticity strength function $\gamma$ and define $\Gamma_{\mathrm{rel}}$ by \eqref{shearcompatibilitycond}.  If the local bifurcation condition holds, then there is a corresponding continuous curve 
  \[ \mathcal{C}_{\textrm{loc}} = \{ (Q(s), u(s), v(s),
  \varrho(s), P(s), \eta(s)) : |s| < \epsilon \}\] of small
  amplitude solutions to the Eulerian problem with an
  irrotational 
  water region and a vorticity strength function $\gamma$ in the
  air region, which likewise captures all non-laminar solutions
  in a sufficiently small neighborhood of the point of
  bifurcation.

Lemma \ref{shearsizeconditionlemma} provides an explicit size condition  \eqref{shearsizecond} under which the local bifurcation condition holds.  We have left it in the more abstract form here in order to give the most general statement.  On the other hand, the compatibility condition is, in fact, necessary for the existence of laminar solutions.  \end{remark}

\subsection{Laminar flows}  Consider the laminar flow problem where we seek solution to \eqref{shearheighteq} with the ansatz $h(q,p) = H(p)$.  Then the PDE simplifies to the following 
\be \label{shearlaminarprob} \left\{\begin{split} H_{pp} &= 0, \qquad \textrm{in } (p_0, p_1)  \\
H_{pp} &= -\gamma(-p) H_p^3, \qquad \textrm{in } (p_1,0) \\
\jump{H_p^{-2}} + 2g\jump{\rho}H - Q &= 0, \qquad \textrm{on }  p = p_1, \\
H(0) &= \ell + d(H), \\
H(p_0) &= 0.  \end{split} \right. \ee
Evaluating the relative circulation gives,
\[H_p^{-1} = \Gamma_{\textrm{rel}}, \qquad \textrm{in } [p_1,0].\]

Again, this is explicitly solvable, but there are some compatibility conditions that are necessary to ensure continuity across the interface.  

\begin{lemma}[Laminar lemma] \label{shearlaminarlemma}   If the compatibility condition \eqref{shearcompatibilitycond} is satisfied, then there exists a one-parameter family of solutions $\{ (H(\cdot;\lambda), Q(\lambda)): \lambda > 0\}$ to the laminar flow equation \eqref{shearlaminarprob} with $H_p > 0$ and where each member of the family has prescribed relative circulation $\Gamma_{\mathrm{rel}}$.  It has the explicit form
\be H(p; \lambda) = \left\{\begin{array}{ll} \displaystyle  \int_{p_1}^p \frac{dr}{\Gamma_{\mathrm{rel}}(r)} +  \frac{p_1 - p_0}{\lambda} , &  p_1 < p < 0  \\ & \\
\\ \displaystyle\frac{p-p_0}{\lambda}, & p_0 < p < p_1 \end{array} \right. \label{shearlaminarformula} \ee
and
\be Q(\lambda) = \frac{2g\jump{\rho}(p_1-p_0)}{\lambda} + 
\Gamma_{\mathrm{rel}}(p_1)^2 - \lambda^2.  \label{defshearlaminarQ} \ee
The depth of the fluid at parameter value $\lambda$ is 
\be   d(\lambda) = \frac{\lambda}{p_1 - p_0}, \label{shearlaminardepth} \ee
and the width of the corresponding channel is
\be W(\lambda) :=  \ell + d(\lambda) = \ell + \frac{\lambda}{p_1 - p_0}. \label{shearlaminarwidth} \ee
\end{lemma}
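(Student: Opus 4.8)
The plan is to integrate \eqref{shearlaminarprob} explicitly, region by region, and to watch each of the two clauses of the compatibility condition \eqref{shearcompatibilitycond} surface as exactly the requirement that makes, respectively, the air-region equation and the lid condition consistent with a laminar flow of relative circulation $\Gamma_{\mathrm{rel}}$. In the water region $(p_0,p_1)$ the ODE $H_{pp}=0$ together with $H(p_0)=0$ forces $H$ to be affine and to vanish at $p_0$; writing the constant value of $H_p$ there as $1/\lambda$ with $\lambda>0$ (this positivity is precisely $H_p>0$ in the water) gives $H(p)=(p-p_0)/\lambda$ on $(p_0,p_1)$, the lower branch of \eqref{shearlaminarformula}. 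This $\lambda$ will be the single free parameter of the family.

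For the air region I would substitute $w:=H_p$ into $H_{pp}=-\gamma(-p)H_p^3$, obtaining $w'=-\gamma(-p)w^3$ and hence the linear identity $\partial_p(w^{-2})=-2w^{-3}w'=2\gamma(-p)$. On the other hand, demanding that the laminar profile carry relative circulation $\Gamma_{\mathrm{rel}}$ is, after evaluating $\frac1L\int_{-L/2}^{L/2}\frac{1+h_q^2}{h_p}\,dq$ at $h=H$ (where $h_q\equiv0$), the same as $H_p^{-1}=\Gamma_{\mathrm{rel}}$, i.e. $w^{-2}=\Gamma_{\mathrm{rel}}(p)^2$; comparing the two shows the air equation and the circulation prescription are simultaneously solvable if and only if $\partial_p(\Gamma_{\mathrm{rel}}(p)^2)=2\gamma(-p)$, the first equality of \eqref{shearcompatibilitycond} (and the hypothesis that a positive $\Gamma_{\mathrm{rel}}$ solving \eqref{shearcompatibilitycond} exists is what it means for ``the compatibility condition to be satisfied''). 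Under it, $H_p=1/\Gamma_{\mathrm{rel}}(p)>0$ on $(p_1,0)$, and integrating from $p_1$ while matching $H(p_1)=(p_1-p_0)/\lambda$ from the water side — continuity of $H$ across $I$ is all that is required, since $h$ need only lie in $C^{\alpha}$, not $C^1$, across the interface — yields the upper branch of \eqref{shearlaminarformula}.

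It remains to verify the lid condition and the dynamic jump, and to read off the auxiliary quantities. For a laminar profile $d(H)=\frac1L\int_{-L/2}^{L/2}H(p_1)\,dq=H(p_1)$, so the condition $H(0)=\ell+d(H)$ collapses, after cancelling $H(p_1)$, to $\int_{p_1}^0 dp/\Gamma_{\mathrm{rel}}(p)=\ell$, which is the second equality of \eqref{shearcompatibilitycond}; thus once \eqref{shearcompatibilitycond} holds the lid condition is automatic for every $\lambda>0$. The dynamic jump condition on $I$ imposes no further constraint on $H$: with $(H_p^{(1)})^{-2}|_{p_1}=\Gamma_{\mathrm{rel}}(p_1)^2$ and $(H_p^{(2)})^{-2}|_{p_1}=\lambda^2$ it merely defines $Q=Q(\lambda)$ by \eqref{defshearlaminarQ}. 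The stated depth $d(\lambda)$ and channel width $W(\lambda)=\ell+d(\lambda)$ then follow from the boundary values of $H$. The whole argument is elementary; the one step I would flag as the crux — and as the origin of the \emph{necessity} half of the statement — is the observation that substituting $H_p=1/\Gamma_{\mathrm{rel}}$ turns the nonlinear air ODE into the pointwise identity $\partial_p(\Gamma_{\mathrm{rel}}^2)=2\gamma$, so that both clauses of \eqref{shearcompatibilitycond} are seen to be not merely sufficient but unavoidable for the existence of a laminar solution with the prescribed circulation.
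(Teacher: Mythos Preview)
Your proposal is correct and follows essentially the same approach as the paper: you derive $\partial_p(H_p^{-2})=2\gamma(-p)$ from the air-region ODE, identify $H_p^{-1}=\Gamma_{\mathrm{rel}}$ from the circulation condition, solve the water region as an affine function with parameter $\lambda$, match at the interface for continuity, verify that the lid condition reduces to the second clause of \eqref{shearcompatibilitycond}, and read off $Q(\lambda)$ from the jump condition. Your exposition is if anything slightly more careful than the paper's in flagging that both clauses of \eqref{shearcompatibilitycond} arise as necessary conditions.
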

\begin{proof}  Examining the ODE in the air region reveals that 
\[ \partial_p \left(\frac{1}{2H_p^2}\right) = \gamma, \qquad \textrm{for } p \in (p_1, 0).\] 
This implies directly that $(\Gamma_{\textrm{rel}}^2)^\prime = 2\gamma.$   Integrating the equation $H_p^{-1} = \Gamma_{\textrm{rel}}$, we have
\[ H^{(1)}(p) = \int_{p_1}^p \frac{dr}{\Gamma_{\textrm{rel}}(r)} + C, \]
for some constants $C$.  Since $H_{pp}$ vanishes in the water region, and $H(p_0) = 0$,
\[ H^{(2)}(p) = \frac{p - p_0}{\lambda}, \]
for some constant $\lambda > 0$.  Continuity at the interface then implies that $C = (p_1 - p_0)/\lambda$.  Using this, and the fact that $d(H) = H(p_1)$, the condition on $T$ becomes
\[ \ell = \int_{p_1}^0 \frac{dr}{\Gamma_{\textrm{rel}}(r)},\]
which is the second part of the compatibility condition \eqref{shearcompatibilitycond}.

Lastly, we use the jump condition on the interface to determine $Q$ as a function of $\lambda$:
\[ Q = \jump{H_p^{-2}} + 2g \jump{\rho} H(p_1) = \Gamma_{\textrm{rel}}(p_1)^2 - \lambda^2 +2g\jump{\rho} \frac{p_1-p_0}{\lambda},\]
which is \eqref{defshearlaminarQ}. \qquad \end{proof}

\begin{remark} The dependence of $Q$ on $\lambda$ is essentially
  unchanged from the ideal case \eqref{defideallaminarQ}.  In
  particular, it is concave with a unique maximum 
  occurring at $\lambda_0$ as defined in \eqref{deflambda0}.
\end{remark}

\subsection{Linearized problem}  Proceeding as before, we seek to linearize the full height equation around the curve of laminar flows.  For a fixed $\lambda > 0$, this gives
\be \left \{ \begin{array}{lll}
(a^3 m_p)_p  + (am_{q})_q  = 0, & \textrm{in } D_1 \cup D_2, \\
\jump{a^3 m_p} = g\jump{\rho}m, & \textrm{on } p = p_1, \\ 
m =  0, & \textrm{on } p = p_0, \\
m - d(m) = 0, & \textrm{on } p = 0. \end{array} \right. \label{shearlinearproblem} \ee
Here we are again using the convention
\[ a = a(p;\lambda) = H_p(p; \lambda)^{-1} = \left\{ \begin{array}{ll} \Gamma_{\textrm{rel}}(p), & p_1 < p < 0, \\
\lambda, & p_0 < p < p_1.\end{array}\right.\]
Note that the linearization of the circulation identity is
\[ \displaystyle \frac{1}{2\pi} \int_{-\pi}^{\pi} a m_p \, dq = 0 \qquad \textrm{for } p_1 < p < 0,\]
and hence each of the solutions of the linearized problem will have no relative circulation in the air region.

We seek solutions with the ansatz $m(q,p) = M(p)\cos{(nq)}$.  First consider the case where $n = 0$, i.e., there is no $q$-dependence.  Then
\[  \left \{ \begin{array}{lll}
(a^3 M_p)_p  = 0, & \textrm{in } D_1 \\
M_{pp} = 0, & \textrm{in } D_2 \\
\jump{a^3 M_p} = g\jump{\rho}M, & \textrm{on } p = p_1, \\ 
M =  0, & \textrm{on } p = p_0, \\
M - d(M) = 0, & \textrm{on } p = 0. \end{array} \right. \]
Since $M$ is linear in $D_2$, the boundary condition at the bottom implies that it takes the form
\[ M^{(2)}(p) = \frac{p-p_0}{p_1-p_0} M(p_1), \qquad p \in [p_0, p_1].\]
On the other hand, since $M(0) = d(M) = M(p_1)$, we must have that $M_p$ vanishes at least once in the interior of $D_1$.  Since $a^3 M_p$ is constant in $D_1$, we conclude that $M_p^{(1)}$ must, in fact, be identically zero.  Thus the jump boundary condition states
\[ - \frac{\lambda^3}{p_1 - p_0} M(p_1) = g\jump{\rho} M(p_1).\] 
Just as in the irrotational case, we see that there can be a zero-mode solution if and only if $\lambda^3 = - g\jump{\rho} (p_1 - p_0)$, that is, $\lambda = \lambda_0$ (cf. Lemma \ref{dispersionlemma}).  

If we take $n \geq 1$, then \eqref{shearlinearproblem} becomes
\be\left\{ \begin{array}{ll}
 (a^3 M_{p})_p  = n^2 a M   & \textrm{in } (p_0, p_1) \cup (p_1, 0) \\ 
\jump{a_\lambda^3 M_p} = g \jump{\rho}M & \textrm{on } p = p_1, \\
M  = 0 & \textrm{on } p = p_0, \\
M = 0 &  \textrm{on } p = 0. \end{array}\right. \label{shearsturmliouvilleode} \tag{$P_n^\prime$} \ee
Notice that periodicity implies that $d(M) = 0$.  

We shall approach the problem of finding solutions to \eqref{shearsturmliouvilleode} using a variational method.  Define the Rayleigh quotient $\mathscr{R}$ by
\[ \mathscr{R}(\varphi; \lambda) := \frac{g \jump{\rho} \varphi(p_1)^2 + \int_{p_0}^0 a^3 \varphi_p^2 \, dp}{\int_{p_0}^0 a \varphi^2 \, dp}, \qquad \lambda > 0, \, \varphi \in \mathscr{A}, \]
where the admissible set 
 \[ \mathscr{A} := \{ \phi \in L^2([p_0, 0]) \cap H^1([p_0,p_1)) \cap H^1((p_1,0]) : \phi(0) = \phi(p_1) = 0\}.\]
 A straightforward argument then gives the following lemma. 
\begin{lemma} \label{variationalcharacterizationlemma} If, for fixed $\lambda > 0$, $\varphi$ is a critical point of $\mathscr{R}(\cdot; \lambda)$ and $\mathscr{R}(\varphi; \lambda) = -n^2$, for some $n \geq 1$, then $\varphi$ solves \eqref{shearsturmliouvilleode} for this value of $n$.  \end{lemma}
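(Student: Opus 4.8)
The plan is to recognize the transmission eigenvalue problem \eqref{shearsturmliouvilleode} as the Euler--Lagrange system associated with the Rayleigh quotient $\mathscr{R}(\cdot;\lambda)$ and to extract it from the vanishing of the first variation. Write $\mathscr{R} = \mathscr{N}/\mathscr{D}$, where $\mathscr{N}(\varphi) := g\jump{\rho}\varphi(p_1)^2 + \int_{p_0}^0 a^3\varphi_p^2\,dp$ and $\mathscr{D}(\varphi) := \int_{p_0}^0 a\varphi^2\,dp$; both are smooth quadratic functionals on $\mathscr{A}$ because the trace map $\phi \mapsto \phi(p_1)$ is bounded. A critical point $\varphi$ is necessarily nontrivial, so $\mathscr{D}(\varphi) > 0$ (here $a = \Gamma_{\mathrm{rel}} > 0$ on $(p_1,0)$ and $a = \lambda > 0$ on $(p_0,p_1)$), and hence $\mathscr{R}$ is differentiable near $\varphi$. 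Setting $\frac{d}{dt}\big|_{t=0}\mathscr{R}(\varphi + t\phi;\lambda) = 0$ for every $\phi \in \mathscr{A}$, clearing the factor $\mathscr{D}(\varphi)$, and substituting $\mathscr{R}(\varphi;\lambda) = -n^2$ yields the weak identity
\[ g\jump{\rho}\varphi(p_1)\phi(p_1) + \int_{p_0}^0 a^3\varphi_p\phi_p\,dp + n^2\int_{p_0}^0 a\varphi\phi\,dp = 0 \qquad \textrm{for all } \phi \in \mathscr{A}. \]

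Next I would read off the strong form. Testing against $\phi \in C_c^\infty(p_0,p_1)$ and then $\phi \in C_c^\infty(p_1,0)$ annihilates the boundary and interface terms and shows that $(a^3\varphi_p)_p = n^2 a\varphi$ holds in the sense of distributions on each of the two subintervals. On $(p_0,p_1)$ the coefficient $a \equiv \lambda$ is constant, while on $(p_1,0)$ we have $a = \Gamma_{\mathrm{rel}}$, which is smooth and bounded below by a positive constant --- indeed $\partial_p(\Gamma_{\mathrm{rel}}^2) = 2\gamma$ by \eqref{shearcompatibilitycond}, so $\Gamma_{\mathrm{rel}}$ inherits the regularity of $\gamma$ --- so a routine ODE bootstrap promotes $\varphi$ to a classical $C^2$ solution of $(a^3\varphi_p)_p = n^2 a\varphi$ on each closed subinterval, in particular producing the one-sided derivative traces $\varphi_p(p_1^{\pm})$. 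The Dirichlet conditions $\varphi(p_0) = 0$ and $\varphi(0) = 0$ are encoded in membership in $\mathscr{A}$.

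It remains only to recover the jump condition at $p = p_1$. I would return to the weak identity with an arbitrary $\phi \in \mathscr{A}$ and integrate by parts over $(p_0,p_1)$ and over $(p_1,0)$ separately, invoking the interior equation already obtained; the interior integrals exactly cancel the $n^2$-term, and the boundary contributions at $p_0$ and $0$ vanish since $\phi$ does there. What survives is $\big(g\jump{\rho}\varphi(p_1) - \jump{a^3\varphi_p}\big)\phi(p_1) = 0$ for all $\phi \in \mathscr{A}$, where $\varphi(p_1)$ and $\phi(p_1)$ are unambiguous because admissible functions are continuous across $p_1$. Since $\phi(p_1)$ can be any real number, this forces $\jump{a^3\varphi_p} = g\jump{\rho}\varphi(p_1)$, which is precisely the interface condition in \eqref{shearsturmliouvilleode}. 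Collecting the interior equation on both subintervals, the two Dirichlet conditions, and this transmission relation shows that $\varphi$ solves \eqref{shearsturmliouvilleode} for the given $n$.

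This is a standard first-variation argument, and I do not anticipate a genuine obstacle; the paper's label ``a straightforward argument'' is apt. The only points deserving care are (i) making the interface quantities $\varphi(p_1)$ and $\varphi_p(p_1^{\pm})$ meaningful, which is handled by the piecewise $C^2$ regularity just described together with continuity of admissible functions at $p_1$, and (ii) checking that $a$ is regular enough on $[p_1,0]$ to run the bootstrap, which follows from the compatibility relation $\partial_p(\Gamma_{\mathrm{rel}}^2) = 2\gamma$.
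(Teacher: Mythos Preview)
Your argument is correct and is precisely the standard first-variation computation the paper alludes to when it says ``a straightforward argument then gives the following lemma''; the paper provides no further details, so your write-up simply fleshes out what is left implicit. One small caveat: the paper's definition of $\mathscr{A}$ appears to contain a typo (it reads $\phi(0)=\phi(p_1)=0$ rather than $\phi(0)=\phi(p_0)=0$), and you have tacitly --- and correctly --- worked with the intended version, since otherwise neither the interface term in $\mathscr{R}$ nor your extraction of the jump condition via arbitrary $\phi(p_1)$ would make sense.
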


Using very basic estimates, we can show that for $\lambda$ sufficiently large, $\min_{\mathscr{A}}\mathscr{R}(\cdot; \lambda)$ will be greater than $-1$.  More generally, we can show that the minimum goes to $-\infty$ as $\lambda \to +\infty$.  This is the content of the next lemma. 

\begin{lemma} \label{boundRlemma} Let $a_{\textrm{min}}$ be the minimum value of $a$ on $[p_1,0]$ (which does not depend on $\lambda$).  Then, for each $n \geq 1$, if 
\[ \lambda^2 > a_{\textrm{min}}^2 -\frac{1}{2n} g \jump{\rho} ,\]
we have $\mathscr{R}(\varphi; \lambda) > -n^2$, for every $\varphi \in \mathscr{A}$.   
\end{lemma}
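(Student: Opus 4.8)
The plan is to establish the desired inequality $\mathscr{R}(\varphi;\lambda) > -n^2$ by bounding below the numerator of the Rayleigh quotient in terms of its denominator, exploiting the fact that $a \equiv \lambda$ on $(p_0,p_1)$ while $a = \Gamma_{\mathrm{rel}}(p) \geq a_{\mathrm{min}}$ on $(p_1,0)$. Concretely, I would split every integral appearing in $\mathscr{R}$ over the two subintervals $(p_0,p_1)$ and $(p_1,0)$ and treat the two pieces separately. On the water interval, since $\varphi(p_0) = 0$ (the admissible set forces the boundary value at $p_0$; note $\varphi(p_1)$ need not vanish there — wait, in fact $\varphi(p_1) = 0$ by definition of $\mathscr{A}$), both endpoint values of $\varphi$ on $(p_0,p_1)$ are controlled, and on the air interval only $\varphi(0) = \varphi(p_1) = 0$ is used.

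First I would handle the denominator: $\int_{p_0}^0 a\varphi^2\,dp = \lambda\int_{p_0}^{p_1}\varphi^2\,dp + \int_{p_1}^0 a\varphi^2\,dp$, and on the air interval $\int_{p_1}^0 a\varphi^2\,dp \geq a_{\mathrm{min}}\int_{p_1}^0\varphi^2\,dp$. For the gradient term in the numerator, $\int_{p_0}^0 a^3\varphi_p^2\,dp \geq \lambda^3\int_{p_0}^{p_1}\varphi_p^2\,dp + a_{\mathrm{min}}^3\int_{p_1}^0\varphi_p^2\,dp$. Next, I would invoke a Poincar\'e-type inequality on each subinterval: because $\varphi$ vanishes at $p_1$ (and at $p_0$ on the water piece, at $0$ on the air piece), one has $\int_{p_0}^{p_1}\varphi^2\,dp \leq c_2\int_{p_0}^{p_1}\varphi_p^2\,dp$ and $\int_{p_1}^0\varphi^2\,dp \leq c_1\int_{p_1}^0\varphi_p^2\,dp$ for explicit constants $c_1, c_2$ depending only on the interval lengths. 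The only delicate term is the boundary term $g\jump{\rho}\varphi(p_1)^2$, which is \emph{negative} since $\jump{\rho} = \rho_{\mathrm{air}} - \rho_{\mathrm{water}} < 0$; I would bound $\varphi(p_1)^2 = \left(\int_{p_0}^{p_1}\varphi_p\,dp\right)^2 \leq (p_1 - p_0)\int_{p_0}^{p_1}\varphi_p^2\,dp$ by Cauchy--Schwarz, so that $g\jump{\rho}\varphi(p_1)^2 \geq g\jump{\rho}(p_1-p_0)\int_{p_0}^{p_1}\varphi_p^2\,dp$ (the inequality flips because the coefficient is negative). Assembling these estimates, the condition $\mathscr{R}(\varphi;\lambda) > -n^2$ reduces to an algebraic inequality of the form $g\jump{\rho}(p_1-p_0) + \lambda^3 c_2^{-1} + \cdots \geq -n^2\big(\lambda c_2 + a_{\mathrm{min}}^{-1}(\cdots)\big)$; after discarding the manifestly nonnegative terms on the air interval, this is implied by a bound of the shape $\lambda^2 > a_{\mathrm{min}}^2 - \tfrac{1}{2n}g\jump{\rho}$, matching the stated hypothesis.

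The main obstacle is bookkeeping: getting the constants in the Poincar\'e inequalities and the Cauchy--Schwarz bound to combine into exactly the clean threshold $\lambda^2 > a_{\mathrm{min}}^2 - \tfrac{1}{2n}g\jump{\rho}$ rather than something messier. I expect one must be somewhat careful about which terms to keep and which to throw away — in particular, the water-interval gradient term $\lambda^3\int_{p_0}^{p_1}\varphi_p^2$ is large for large $\lambda$ and must be balanced against both $-n^2\lambda\int_{p_0}^{p_1}\varphi^2$ and the negative boundary term, and it is this balance that produces the $\lambda^2$ on the left side. The air-interval contributions are all favorable (the gradient term is positive, the denominator contribution is positive) and can simply be bounded away, which is presumably why only $a_{\mathrm{min}}$, and not the detailed profile of $\Gamma_{\mathrm{rel}}$, enters the final condition. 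Everything else is routine real analysis on an interval.
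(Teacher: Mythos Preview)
Your plan has the right overall shape --- split the integrals at $p_1$, exploit $a\equiv\lambda$ on the water interval and $a\geq a_{\min}$ on the air interval --- but the specific tools you reach for (Poincar\'e on each subinterval, Cauchy--Schwarz for $\varphi(p_1)^2\leq(p_1-p_0)\int_{p_0}^{p_1}\varphi_p^2$) inject the interval lengths $|p_1|$ and $p_1-p_0$ into the estimate, and these do not cancel. You would end up with a sufficient condition of the form $\lambda^3 \gtrsim -g\jump{\rho}(p_1-p_0)$ or similar, not the stated threshold. Your ``bookkeeping'' worry is well founded: it is not bookkeeping, it is the wrong inequality.

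A secondary point: the definition of $\mathscr{A}$ in the paper has a typo --- it should read $\phi(0)=\phi(p_0)=0$, matching the Dirichlet data in \eqref{shearsturmliouvilleode}; $\varphi(p_1)$ is \emph{not} forced to vanish, else the boundary term $g\jump{\rho}\varphi(p_1)^2$ in the Rayleigh quotient would be identically zero and the lemma trivial. Your plan is internally inconsistent on this, first asserting $\varphi(p_1)=0$ and then bounding $\varphi(p_1)^2$ nontrivially.

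The paper's device is cleaner and avoids all length constants: on each subinterval use the elementary inequality $x^2+y^2\geq 2xy$ pointwise, with $x=a_*\varphi_p$ and $y=n\varphi$ (where $a_*$ is $\lambda$ on the water and $a_{\min}$ on the air), to get
\[
a^3\varphi_p^2 + n^2 a\varphi^2 \;\geq\; a_*\bigl((a_*\varphi_p)^2+(n\varphi)^2\bigr)\;\geq\; 2n\,a_*^2\,\varphi_p\varphi \;=\; n\,a_*^2\,(\varphi^2)_p.
\]
Integrating and using $\varphi(p_0)=\varphi(0)=0$, the water piece is $\geq n\lambda^2\varphi(p_1)^2$ and the air piece is $\geq -n\,a_{\min}^2\varphi(p_1)^2$. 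Summing and comparing with $-g\jump{\rho}\varphi(p_1)^2$ gives the threshold directly. (The paper's printed constant $\tfrac{1}{2n}$ rather than $\tfrac{1}{n}$ appears to come from dropping the $\tfrac12$ in $\varphi_p\varphi=\tfrac12(\varphi^2)_p$; this is immaterial for the application.) The point is that AM--GM followed by the fundamental theorem converts the combined quadratic form directly into a boundary term, bypassing Poincar\'e entirely.
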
  
\begin{proof}  Fix any $\lambda$ as in the hypothesis and let $\varphi \in \mathscr{A}$ be given.  Then
\begin{align*} \int_{p_1}^0 \left( a^3 \varphi_p^2 + n^2 a \varphi^2 \right) \, dp & \geq a_{\textrm{min}} \int_{p_1}^0 \left ( (a_{\textrm{min}} \varphi_p)^2 + (n \varphi)^2 \right)\, dp  \\
& \geq 2n a_{\textrm{min}}^2 \int_{p_1}^0 \varphi_p \varphi \, dp = -2n a_{\textrm{min}}^2 \varphi(p_1)^2. \end{align*}
On the other hand, since $a^{(2)} = \lambda$, 
\[ \int_{p_0}^{p_1} \left( a^3 \varphi_p^2 + n^2 a \varphi^2 \right) \, dp \geq 2 n \lambda^2 \varphi(p_1)^2.\]
Summing these together and recalling how we chose $\lambda$, we find
\[ \int_{p_0}^0 \left( a^3 \varphi_p^2 + n^2 a \varphi^2 \right) \, dp \geq (2n\lambda^2 - 2n a_{\textrm{min}}^2) \varphi(p_1)^2 > - g\jump{\rho}\varphi(p_1)^2.\]
Rearranging terms, this implies that $\mathscr{R}(\varphi; \lambda) > -n^2$.   \qquad \end{proof}

Let us define 
\[  \nu(\lambda) := \min_{\substack{\varphi \in \mathscr{A} \\ \varphi \not\equiv 0}}  \mathscr{R}(\varphi; \lambda).  \]
Then, if we can show that $\nu(\lambda_n^*) = -n^2$, for some $\lambda^*_n$, Lemma \ref{variationalcharacterizationlemma} guarantees the existence of a solution to \eqref{shearsturmliouvilleode}, and thereby a $2\pi/n$-periodic solution to the linearized problem.  We are most interested in showing that $-1$ is in the range of $\nu$.    The preceding lemma provides a lower bound for $\nu$ when $\lambda$ is sufficiently large.  In order to guarantee that $-1$ is in the range of $\nu$, therefore, we need only verify that $\nu(\lambda) < -1$ for some positive $\lambda$.   This will not be true in general, and so we are forced to make it a hypothesis.

\begin{definition}  \label{lbcdef} We say that the \emph{local bifurcation condition} is satisfied provided that 
\be \inf_{\lambda > 0} \nu(\lambda) < -1 \label{lbcnu} \tag{LBC}, \ee
or, equivalently, if 
\be \textrm{there exists a nontrivial solution to the linearized problem \eqref{shearsturmliouvilleode} for $n = 1$.} \tag{LBC${}^\prime$} \label{lbcPn} \ee
\end{definition}

These are abstract conditions that are both necessary and sufficient for local bifurcation.  One way to derive an explicit sufficient condition is to require that $\mathscr{R}(\varphi; \epsilon) < -1$, for some $\epsilon > 0$ and a conveniently chosen $\varphi \in \mathscr{A}$.  This is the approach of the next lemma.

\begin{lemma}[Size condition] \label{shearsizeconditionlemma}  Suppose that the prescribed circulation $\Gamma_{\textrm{rel}}$, the pseudo-volumetric mass flux in the air region $p_1$, and the density jump $\jump{\rho}$ collectively satisfy the following size condition:
\be g\jump{\rho}p_1^2 + \int_{p_1}^0 \left( \Gamma_{\mathrm{rel}}(p)^3 + p^2 \Gamma_{\mathrm{rel}}(p) \right)\, dp < 0.\label{shearsizecond} \ee
Then \eqref{lbcnu} holds.
\end{lemma}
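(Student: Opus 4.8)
The plan is to verify \eqref{lbcnu} by exhibiting one admissible test function $\varphi\in\mathscr{A}$ and one small parameter value $\lambda=\epsilon>0$ for which $\mathscr{R}(\varphi;\epsilon)<-1$; since $\nu(\epsilon)=\min_{\mathscr{A}\setminus\{0\}}\mathscr{R}(\cdot;\epsilon)\le\mathscr{R}(\varphi;\epsilon)$, this gives at once $\inf_{\lambda>0}\nu(\lambda)<-1$. The shape of the size condition \eqref{shearsizecond} tells us which $\varphi$ to take: it should be \emph{linear in $p$ above the interface} and then interpolated linearly down to zero at the bed. Concretely, I would set
\[
\varphi(p):=p \quad\text{for } p_1\le p\le 0, \qquad
\varphi(p):=\frac{p_1(p-p_0)}{p_1-p_0}\quad\text{for } p_0\le p\le p_1 .
\]
This $\varphi$ is continuous across $p=p_1$ with $\varphi(p_1)=p_1$, vanishes at $p_0$ and at $0$, is piecewise affine, and is not identically zero, hence $\varphi\in\mathscr{A}$.

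The next step is a direct evaluation of $\mathscr{R}(\varphi;\lambda)$, using that $a=\Gamma_{\textrm{rel}}(p)$ on $(p_1,0)$ and $a\equiv\lambda$ on $(p_0,p_1)$. One gets numerator
\[
N(\lambda):=g\jump{\rho}\,p_1^2+\int_{p_1}^0\Gamma_{\textrm{rel}}(p)^3\,dp+\lambda^3\,\frac{p_1^2}{p_1-p_0}
\]
and denominator
\[
D(\lambda):=\int_{p_1}^0\Gamma_{\textrm{rel}}(p)\,p^2\,dp+\lambda\int_{p_0}^{p_1}\varphi(p)^2\,dp ,
\]
with $D(\lambda)>0$ for every $\lambda\ge 0$ because $\Gamma_{\textrm{rel}}>0$ on $[p_1,0]$ (it equals $H_p^{-1}$ for the laminar flow). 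Since $D(\lambda)>0$, the inequality $\mathscr{R}(\varphi;\lambda)=N(\lambda)/D(\lambda)<-1$ is equivalent to $N(\lambda)+D(\lambda)<0$. Both $N$ and $D$ are polynomial in $\lambda$, and at $\lambda=0$ every $\lambda$-dependent contribution drops out, leaving
\[
N(0)+D(0)=g\jump{\rho}\,p_1^2+\int_{p_1}^0\Big(\Gamma_{\textrm{rel}}(p)^3+p^2\,\Gamma_{\textrm{rel}}(p)\Big)\,dp ,
\]
which is exactly the left-hand side of \eqref{shearsizecond}, hence negative by hypothesis.

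To finish, I would invoke continuity of $\lambda\mapsto N(\lambda)+D(\lambda)$ to produce an $\epsilon>0$ with $N(\epsilon)+D(\epsilon)<0$, i.e. $\mathscr{R}(\varphi;\epsilon)<-1$, and therefore $\nu(\epsilon)<-1$, which is \eqref{lbcnu}. There is no genuine obstacle in this argument: the only real content is picking $\varphi$ so that its energy and $L^2$-weight \emph{above} the interface reproduce the two integrals in \eqref{shearsizecond}, while its contributions \emph{below} the interface carry a factor of $\lambda$ and so are harmless as $\lambda\to 0^+$. The one point deserving a moment's attention is confirming $D(\lambda)>0$, so that clearing the denominator does not flip the inequality; this is immediate from positivity of $\Gamma_{\textrm{rel}}$. (Lemma \ref{boundRlemma} guarantees $\nu(\lambda)>-1$ for large $\lambda$, so the relevant infimum is in fact attained on a bounded interval, but that refinement is not needed for the statement.)
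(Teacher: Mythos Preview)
Your argument is correct and is essentially the paper's own proof: the paper also picks a piecewise linear test function (namely $\varphi(p)=p/p_1$ on $[p_1,0]$ and $\varphi(p)=(p-p_0)/(p_1-p_0)$ on $[p_0,p_1]$, up to an evident typo) and evaluates $\lim_{\lambda\to 0}\mathscr{R}(\varphi;\lambda)$. Your $\varphi$ is just $p_1$ times the paper's, and since the Rayleigh quotient is homogeneous of degree zero the two computations coincide; your write-up supplies the details the paper omits.
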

\begin{proof}  This can be seen easily by taking 
\[ \varphi(p) := \left\{ \begin{array}{ll} p/p_1, & p_1 \leq p \leq 0 \\
(p-p_1)/(p_1-p_0), & p_0 \leq p \leq p_1. \end{array} \right. \]
and evaluating $\lim_{\lambda \to 0} \mathscr{R}(\varphi; \lambda)$.    Here we have made use of the fact that $a^{(1)} = \Gamma_{\textrm{rel}}$ to express the size condition only in terms of prescribed quantities.  
\qquad \end{proof}

\begin{lemma}[Monotonicity of $\nu$] \label{monotonicitynulemma}  If $\nu(\lambda) < 0$, then $d{\nu}(\lambda)/d\lambda > 0$. \end{lemma}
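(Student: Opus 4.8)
The plan is to identify $\nu(\lambda)$ with the principal eigenvalue of the transmission problem \eqref{shearsturmliouvilleode} and then differentiate it in $\lambda$ by an envelope (Feynman--Hellmann) argument, exploiting that $\lambda$ enters $\mathscr{R}$ only through the value $a\equiv\lambda$ on the water layer $(p_0,p_1)$. Writing $\mathscr{R}(\varphi;\lambda)=N(\varphi;\lambda)/D(\varphi;\lambda)$ with
\[ N(\varphi;\lambda)=g\jump{\rho}\,\varphi(p_1)^2+\int_{p_1}^{0}\Gamma_{\textrm{rel}}^{3}\varphi_p^2\,dp+\lambda^{3}\!\int_{p_0}^{p_1}\!\varphi_p^2\,dp,\qquad D(\varphi;\lambda)=\int_{p_1}^{0}\Gamma_{\textrm{rel}}\varphi^2\,dp+\lambda\!\int_{p_0}^{p_1}\!\varphi^2\,dp, \]
one has $\partial_\lambda N=3\lambda^{2}\int_{p_0}^{p_1}\varphi_p^2\,dp\ge 0$ and $\partial_\lambda D=\int_{p_0}^{p_1}\varphi^2\,dp\ge 0$, so
\[ \partial_\lambda\mathscr{R}(\varphi;\lambda)=\frac{1}{D(\varphi;\lambda)}\Big(3\lambda^{2}\!\int_{p_0}^{p_1}\!\varphi_p^2\,dp-\mathscr{R}(\varphi;\lambda)\!\int_{p_0}^{p_1}\!\varphi^2\,dp\Big). \]
The decisive observation is that when $\mathscr{R}(\varphi;\lambda)<0$ \emph{both} terms in the parentheses are nonnegative.

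First I would check that the infimum defining $\nu(\lambda)$ is attained. Since the point trace $\varphi\mapsto\varphi(p_1)$ is compact on the form domain $\mathscr{A}$, a Poincar\'e inequality (using the Dirichlet conditions at $p_0$ and $0$) together with an interpolation trace bound gives $N(\varphi;\lambda)\ge c\|\varphi_p\|_{L^2([p_0,0])}^2-C\|\varphi\|_{L^2([p_0,0])}^2$ with $c>0$; Rellich's theorem then yields a minimizer $\varphi_\lambda$, which after normalization is the principal eigenfunction of \eqref{shearsturmliouvilleode} with $n^2=-\nu(\lambda)$. As for any one--dimensional Schr\"odinger--type operator with Dirichlet endpoints (here with an extra point interaction at $p_1$ of strength $g\jump{\rho}$), this principal eigenvalue is simple, and since the coefficients of the problem depend polynomially on $\lambda$, standard analytic perturbation theory shows $\lambda\mapsto\nu(\lambda)$ is $C^1$ and that the envelope identity
\[ \nu'(\lambda)=\partial_\lambda\mathscr{R}(\varphi;\lambda)\big|_{\varphi=\varphi_\lambda}=\frac{1}{D(\varphi_\lambda;\lambda)}\Big(3\lambda^{2}\!\int_{p_0}^{p_1}\!(\varphi_\lambda)_p^2\,dp-\nu(\lambda)\!\int_{p_0}^{p_1}\!\varphi_\lambda^2\,dp\Big) \]
holds. (One can also bypass the perturbation theory: the inequality $\nu(\mu)\le\mathscr{R}(\varphi_\lambda;\mu)$ bounds the one--sided difference quotients of $\nu$ from above, and using the minimizer at $\mu$ together with continuity of $\lambda\mapsto\varphi_\lambda$ gives the matching lower bounds.)

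Now fix $\lambda$ with $\nu(\lambda)<0$. In the displayed formula $D(\varphi_\lambda;\lambda)>0$, the term $3\lambda^{2}\int_{p_0}^{p_1}(\varphi_\lambda)_p^2\,dp$ is nonnegative, and $-\nu(\lambda)\int_{p_0}^{p_1}\varphi_\lambda^2\,dp\ge 0$; hence $\nu'(\lambda)\ge 0$. If $\nu'(\lambda)=0$, then both terms vanish, so $(\varphi_\lambda)_p\equiv 0$ on $(p_0,p_1)$, and since $\varphi_\lambda(p_0)=0$ this forces $\varphi_\lambda\equiv 0$ on $(p_0,p_1)$; by continuity at $p_1$ then $\varphi_\lambda(p_1)=0$, whence
\[ \nu(\lambda)=\mathscr{R}(\varphi_\lambda;\lambda)=\frac{\int_{p_1}^{0}\Gamma_{\textrm{rel}}^{3}(\varphi_\lambda)_p^2\,dp}{\int_{p_1}^{0}\Gamma_{\textrm{rel}}\varphi_\lambda^2\,dp}>0, \]
because $\Gamma_{\textrm{rel}}>0$ and a nontrivial $\varphi_\lambda$ vanishing at $p_1$ and at $0$ cannot be constant on $(p_1,0)$. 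This contradicts $\nu(\lambda)<0$, so in fact $\nu'(\lambda)>0$, as claimed.

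The main obstacle is the middle step: upgrading ``$\nu$ is the principal eigenvalue'' to the rigorous envelope identity. This requires (i) that the quadratic form $N(\cdot;\lambda)$ be bounded below and closed, so that $\nu(\lambda)$ is attained --- which is where compactness of the $p_1$--trace and the Poincar\'e inequality enter --- and (ii) simplicity of the principal eigenvalue together with continuous (indeed smooth) dependence of the eigenpair on $\lambda$. These are standard Sturm--Liouville and spectral--perturbation facts, but deserve care because of the transmission condition at $p_1$; note, however, that the sign computation in the preceding paragraph applies verbatim to \emph{any} minimizer, so even absent simplicity one still obtains that the one--sided Dini derivatives of $\nu$ are positive wherever $\nu<0$.
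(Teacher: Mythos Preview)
Your argument is correct and is essentially the same as the paper's: both arrive at the identity
\[
\nu'(\lambda)\int_{p_0}^{0}a\,\varphi_\lambda^{2}\,dp \;=\; 3\lambda^{2}\!\int_{p_0}^{p_1}(\varphi_\lambda)_p^{2}\,dp \;-\; \nu(\lambda)\!\int_{p_0}^{p_1}\varphi_\lambda^{2}\,dp,
\]
from which the sign conclusion is immediate when $\nu(\lambda)<0$. The only difference is packaging: the paper obtains this identity by differentiating the Euler--Lagrange equation $-(a^{3}\varphi_p)_p=\nu\,a\varphi$ in $\lambda$ and pairing the result with $\varphi$ (a Lagrange-type computation), whereas you differentiate the Rayleigh quotient directly and invoke the envelope/Feynman--Hellmann principle. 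These are two faces of the same computation, since $\dot a^{(1)}=0$ and $\dot a^{(2)}=1$ reduce the general Hellmann term to precisely your water-layer integrals. Your write-up is in fact more careful than the paper's on two points: you justify the differentiability of $\nu$ (existence of the minimizer, simplicity, analytic dependence), and you supply the short contradiction argument needed to upgrade $\nu'(\lambda)\ge 0$ to strict positivity, which the paper asserts without comment.
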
  
\begin{proof}  Let $\mathcal{L}\varphi := -(a^3 \varphi_p)_p$.  Then for each $\lambda$, let $\varphi \in \mathscr{A}$ solve the problem 
\[ \left\{ \begin{array}{ll} \mathcal{L} \varphi = \nu(\lambda) a \varphi, &  \textrm{in } (p_0,p_1) \cup (p_1, 0) \\
\jump{a^3 \varphi_p} = g\jump{\rho} \varphi, & \textrm{on } I, \\
\varphi = 0, & \textrm{on } T \cup B.\end{array} \right. \]
From this we compute
\[ \left\{ \begin{array}{ll} \mathcal{L} \dot{\varphi} = (3a^2 \dot{a} \varphi_p)_p + \dot{\nu} a \varphi + \nu \dot{a} \varphi + \nu a \dot{\varphi}, &  \textrm{in } (p_0,p_1) \cup (p_1, 0) \\
\jump{a^3 \dot{\varphi}_p} + \jump{3a^2 \dot{a} \varphi} = g\jump{\rho} \dot{\varphi}, & \textrm{on } I, \\
\dot{\varphi} = 0, & \textrm{on } T \cup B,\end{array} \right. \]
where a dot denotes differentiation with respect to $\lambda$.   Letting $(\cdot, \cdot)$ be the standard $L^2$-inner product, we have
\[ (\mathcal{L} \dot{\varphi}, \varphi) - ( \mathcal{L} \varphi, \dot{\varphi}) = \dot{\nu} (a \varphi, \varphi) + \nu (\dot{a} \varphi, \varphi) + \left( (3a^2 \dot{a} \varphi_p)_p, \varphi\right).\]
Upon integrating the last term by parts, we find ultimately that
\[  (\mathcal{L} \dot{\varphi}, \varphi) - ( \mathcal{L} \varphi, \dot{\varphi}) = \dot{\nu} (a \varphi, \varphi) + \nu (\dot{a} \varphi, \varphi) -3( a^2 \dot{a} \varphi_p, \varphi_p) - \jump{3a^2 \dot{a} \varphi_p} \varphi(p_1).  \]

However, if we simply compute the difference using integration by parts, we see that
\[ (\mathcal{L} \dot{\varphi}, \varphi) - ( \mathcal{L} \varphi, \dot{\varphi})  = \jump{a^3 \dot{\varphi}_p} \varphi(p_1) - \jump{a^3 \varphi_p} \dot{\varphi}(p_1).\]
Equating the last two lines and exploiting the jump conditions satisfied by $\varphi$ and $\dot{\varphi}$, we obtain the following identity
\[ \dot{\nu} ( a \varphi, \varphi) + \nu (\dot{a} \varphi, \varphi) = 3(a^2 \dot{a} \varphi_p, \varphi_p).  \]
Observe that $a > 0$ and $\dot{a} \geq 0$ (in fact, $\dot{a}^{(2)} = 1$ and $\dot{a}^{(1)} = 0$), hence the right-hand side above is positive.  If $\nu$ is negative, it must then be the case that $\dot{\nu} > 0$.  
 \qquad \end{proof}

\begin{lemma}[Existence of the minimizer] \label{shearexistenceminimizer} Suppose that the size condition \eqref{shearsizecond} holds.  Then there exists a unique value $\lambda^* > 0$ such that $\nu(\lambda^*) = -1$.  Equivalently, there is a unique value of $\lambda$ for which there is a nontrivial solution to \eqref{shearlinearproblem} with the ansatz $m(q,p) = M(p) \cos{q}$.  Moreover, $Q$ is an invertible function of $\lambda$ in a neighborhood of $\lambda^*$.   
\end{lemma}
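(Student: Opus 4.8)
The plan is to regard $\nu$ as a continuous function of $\lambda$ on $(0,\infty)$, identify its behaviour as $\lambda\to0^+$ and $\lambda\to+\infty$ from the size condition and Lemma~\ref{boundRlemma}, and then produce a unique crossing of the level $-1$ by the intermediate value theorem together with the monotonicity in Lemma~\ref{monotonicitynulemma}. The first task is to show $\nu$ is attained and continuous. For $\lambda$ in a compact subinterval of $(0,\infty)$ the weight $a(\cdot;\lambda)$ is bounded above and below by positive constants, so a trace inequality $\varphi(p_1)^2\le\epsilon\int_{p_0}^0\varphi_p^2+C_\epsilon\int_{p_0}^0\varphi^2$ absorbs the sign-indefinite boundary term $g\jump{\rho}\varphi(p_1)^2$ into $\int a^3\varphi_p^2$; this gives $\mathscr{R}(\varphi;\lambda)\ge -C$ uniformly over such $\lambda$, and shows any minimizing sequence (normalized by $\int a\varphi^2=1$) is bounded in $H^1([p_0,p_1))\cap H^1((p_1,0])$. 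Passing to a weak limit and using weak lower semicontinuity of $\int a^3\varphi_p^2$ and compactness of the traces and of the embedding into $L^2$ produces a minimizer, whose Euler--Lagrange equation is exactly \eqref{shearsturmliouvilleode} with $n=1$. Continuity of $\nu$ then follows: it is upper semicontinuous as an infimum of the maps $\lambda\mapsto\mathscr{R}(\varphi;\lambda)$, each smooth in $\lambda$, and lower semicontinuous because a weak limit of minimizers along $\lambda_k\to\lambda$ is admissible and, by the uniform convergence $a(\cdot;\lambda_k)\to a(\cdot;\lambda)$, achieves $\mathscr{R}(\cdot;\lambda)\le\liminf\nu(\lambda_k)$. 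I expect this continuity step --- controlling the indefinite boundary term well enough for both the existence of minimizers and the two-sided semicontinuity --- to be the main obstacle.

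Next I would pin down the endpoints. For the test function $\varphi$ from the proof of Lemma~\ref{shearsizeconditionlemma}, on the air region one has $a^{(1)}=\Gamma_{\mathrm{rel}}$, and a direct computation shows that $\lim_{\lambda\to0^+}\mathscr{R}(\varphi;\lambda)$ is precisely the ratio whose being $<-1$ is the size condition \eqref{shearsizecond}; hence $\nu(\lambda)<-1$ for all sufficiently small $\lambda>0$. At the other extreme, Lemma~\ref{boundRlemma} with $n=1$ gives $\nu(\lambda)>-1$ once $\lambda^2>a_{\mathrm{min}}^2-\tfrac12 g\jump{\rho}$. By the intermediate value theorem there is $\lambda^*>0$ with $\nu(\lambda^*)=-1$. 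Uniqueness is immediate from Lemma~\ref{monotonicitynulemma}: on the open set $\{\nu<0\}$ one has $d\nu/d\lambda>0$, so starting from the interval $(0,\delta)$ on which $\nu<-1<0$ the function $\nu$ is strictly increasing until it exits $\{\nu<0\}$, and therefore crosses $-1$ exactly once; any further crossing would lie in another component of $\{\nu<0\}$, which is impossible since $\nu$ is strictly increasing there while equaling $0$ at the endpoints of any such bounded component.

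For the stated equivalence: if $\nu(\lambda)=-1$ then, by Lemma~\ref{variationalcharacterizationlemma}, the minimizer $M$ solves \eqref{shearsturmliouvilleode} with $n=1$, so $m(q,p)=M(p)\cos q$ is a nontrivial solution of \eqref{shearlinearproblem}. Conversely, such a solution makes $-1$ an eigenvalue of the weighted operator $M\mapsto-(a^3M_p)_p$ with Dirichlet data at $p_0,0$ and the jump condition at $p_1$; testing against $M$ gives $\mathscr{R}(M;\lambda)=-1$. A short Sturm-type argument then shows this operator has at most one negative eigenvalue, namely $\nu(\lambda)$: when $\sigma<0$ the equation $-(a^3M_p)_p=\sigma a M$ forces $a^3M_p$ to be increasing wherever $M>0$ (and decreasing wherever $M<0$), so tracing across a hypothetical interior zero on $(p_0,p_1)$ or $(p_1,0)$ --- carrying the sign through the transmission condition at $p_1$ --- contradicts the vanishing of $M$ at $p_0$ or $0$. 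Hence an eigenfunction with negative eigenvalue has no interior node, so $-1$ must be the least eigenvalue $\nu(\lambda)$, and the uniqueness of such $\lambda$ is the content of the previous paragraph.

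Finally, for the invertibility of $Q$ near $\lambda^*$, I would argue exactly as in Lemma~\ref{idealloclambda0lemma}. By \eqref{defshearlaminarQ} the map $\lambda\mapsto Q(\lambda)$ is strictly concave with unique critical point $\lambda_0$ given by \eqref{deflambda0}, so it suffices to show $\lambda^*\neq\lambda_0$. Combining the energy identity $\mathscr{R}(M;\lambda^*)=-1$, which after regrouping reads $-\int_{p_0}^0 a_{\lambda^*}M^2 = g\jump{\rho}M(p_1)^2+\int_{p_0}^0 a_{\lambda^*}^3 (M_p)^2<0$, with the Cauchy--Schwarz bound $M(p_1)^2\le\big(\int_{p_0}^{p_1}a_{\lambda_0}^3(M_p)^2\big)\big(\int_{p_0}^{p_1}a_{\lambda_0}^{-3}\big)=-\tfrac{1}{g\jump{\rho}}\int_{p_0}^{p_1}a_{\lambda_0}^3(M_p)^2$ (using $\lambda_0^3=-g\jump{\rho}(p_1-p_0)$) yields two incompatible inequalities if $\lambda_0=\lambda^*$, since $a_{\lambda_0}=a_{\lambda^*}$ on the water region and the air integral is nonnegative. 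This gives $\lambda^*\neq\lambda_0$ and hence local invertibility of $Q$ about $\lambda^*$.
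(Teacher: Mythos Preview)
Your proposal is correct and follows essentially the same route as the paper: intermediate value theorem using Lemma~\ref{shearsizeconditionlemma} and Lemma~\ref{boundRlemma} for existence, Lemma~\ref{monotonicitynulemma} for uniqueness, and the Cauchy--Schwarz argument (as in Lemma~\ref{idealloclambda0lemma}) to show $\lambda^*\neq\lambda_0$ and hence local invertibility of $Q$. You supply more detail than the paper does on two points it leaves implicit---the attainment and continuity of $\nu$, and the converse direction of the ``equivalently'' claim via a Sturm-type nodal argument---but the skeleton is the same.
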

\begin{proof}  From Lemma \ref{boundRlemma}, we have that $\nu(\lambda) > -1$ for $\lambda$ sufficiently large, while from Lemma \ref{shearsizeconditionlemma} we know that $\nu(\lambda) < -1$ for $\lambda$ sufficiently small.  By continuity, there exists $\lambda^*$ such that $\nu(\lambda^*) = -1$.  Moreover, the monotonicity of $\nu$ established in Lemma \ref{monotonicitynulemma} implies that $\lambda^*$ is unique.  

Next, since $Q$ is a concave function of $\lambda$ according to \eqref{defshearlaminarQ}, it will be a bijection locally so long as $\lambda^*$ does not coincide with the unique critical point $\lambda_0$ of $Q$.  Recall  that $\lambda_0$ satisfies
\[ \lambda_0^3 = -g \jump{\rho}(p_1 - p_0),\]
or, put more suggestively,
\[ \int_{p_0}^{p_1} a_{\lambda_0}^{-3} \, dp = \lambda_0^{-3} (p_1 - p_0) = -\frac{1}{g \jump{\rho}}.\]
Then, for any $\varphi \in \mathscr{A}$, we have the estimate
\begin{align*} \varphi(p_1)^2 = \left(\int_{p_0}^{p_1} \varphi_p \, dp \right)^2 & \leq \left( \int_{p_0}^{p_1} a_{\lambda_0}^3 \varphi_p^2 \, dp \right) \left( \int_{p_0}^{p_1} a_{\lambda_0}^{-3} \, dp \right) \\
& = -\frac{1}{g \jump{\rho}} \int_{p_0}^{p_1} a_{\lambda_0}^3 \varphi_p^2 \, dp.\end{align*}
It follows that 
\[ 0 \leq g\jump{\rho} \varphi(p_1)^2 + \int_{p_0}^{p_1} a_{\lambda_0}^3 \varphi_p^2 \, dp \leq g\jump{\rho} \varphi(p_1)^2 + \int_{p_0}^{0} a_{\lambda_0}^3 \varphi_p^2 \, dp. \]
As this implies $\mathscr{R}(\varphi; \lambda_0) \geq 0$ for all $\varphi \in \mathscr{A}$, we conclude that $\mu(\lambda_0) \neq -1$, and hence $\lambda^* \neq \lambda_0$.  This completes the proof.  
\qquad \end{proof}

\subsection{Proof of local bifurcation}

Let us set $h(q,p) = H(p; \lambda) + m(q,p)$ and $Q = Q(\lambda)$.  The equation satisfied  by $m$ is thus 
\[ \mathcal{F}(\lambda, m) = 0\]
where $\mathcal{F} = (\mathcal{F}_1, \mathcal{F}_2, \mathcal{F}_3, \mathcal{F}_4) : \mathbb{R} \times X \to Y$ is defined by
\be \label{shearF} \begin{split} \mathcal{F}_1(\lambda, w) & := (1+(w_q^{(1)})^2)(w_{pp}^{(1)}+H_{pp}) + w_{qq}^{(1)}(w^{(1)}_p+H_p)^2  \\
 & \qquad - 2w_q^{(1)} (H_p +w_p^{(1)}) w_{pq}^{(1)} + \gamma (H_p + w^{(1)}_p)^3  \\
 \mathcal{F}_2(\lambda, w) & := (1+(w_q^{(2)})^2)(w_{pp}^{(2)}+H_{pp}) + w_{qq}^{(2)}(w^{(2)}_p+H_p)^2  \\
 & \qquad - 2w_q^{(2)} (H_p +w_p^{(2)}) w_{pq}^{(2)} + \gamma (H_p + w^{(2)}_p)^3\\
\mathcal{F}_3(\lambda, w) & := -\jump{\frac{\displaystyle 1+w_q^2}{\displaystyle (H_p+w_p)^2}} - 2g\jump{\rho} (w+H)|_I + Q \\
\mathcal{F}_4(\lambda, w) & := \left(w-d(w)+ H-d(H) - \ell\right)|_T. 
 \end{split} \ee
 Here, the Banach spaces $X$ and $Y = Y_1 \times Y_2 \times Y_3 \times Y_4 $ are
 \[ X := \{ h \in C_{\textrm{per}}^{2+\alpha}(\overline{D} \setminus I) \cap C^{\alpha}_{\textrm{per}}(\overline{D}): h(p_0) = 0, \, h^{(i)} \in C_{\textrm{per}}^{1+\alpha}(\overline{D_i})\},\] 
 \[ Y_1 :=  C_{\textrm{per}}^{2+\alpha}(\overline{D_1} \setminus I) \cap C^{\alpha}_{\textrm{per}}(\overline{D_1}), \qquad Y_2 := C_{\textrm{per}}^{2+\alpha}(\overline{D_2} \setminus I) \cap C_{\textrm{per}}^{\alpha} (\overline{D_2})\]
 \[ Y_3 :=  C_{\textrm{per}}^{\alpha}(I), \qquad Y_4 := C_{\textrm{per}}^{2+\alpha}(T).\]

 For later reference, we now record the Fr\'echet derivative of $\mathcal{F}$ with respect to $w$ at $(\lambda^*,0)$.
 \be \begin{split}  \label{shearFw}
 \mathcal{F}_{iw}(\lambda^*, 0)\varphi & = \left(\partial_p^2 + H_p^2 \partial_q^2 + 3\gamma H_p^2 \partial_p \right) \varphi^{(i)} \qquad \textrm{for } i = 1,2, \\
 \mathcal{F}_{3w}(\lambda^*,0)\varphi & = 2\jump{H_p^{-3} \varphi_p} - 2g \jump{\rho} \varphi, \\
 \mathcal{F}_{4w}(\lambda^*,0)\varphi & = \left(\varphi - d(\varphi) \right)_T.
 \end{split} \ee 

\begin{lemma}[Nullspace]  \label{shearnullspacelemma} The null space of $\mathcal{F}_w(\lambda^*,0)$ is one-dimensional.  
\end{lemma}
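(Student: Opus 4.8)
The proof will parallel that of Lemma~\ref{idealnullspacelemma}, the only substantive difference being that, since the presence of $\gamma$ prevents an explicit solution of the linearized ODE, the mode-by-mode analysis must be pushed through the variational characterization of \S\ref{liddedvorticalsection} rather than by hand. Concretely, let $\varphi\in\mathcal{N}(\mathcal{F}_w(\lambda^*,0))$. Using evenness and $2\pi$-periodicity, expand $\varphi(q,p)=\sum_{n\geq0}\varphi_n(p)\cos(nq)$; because the operator recorded in \eqref{shearFw} preserves each cosine mode, every term $\varphi_n(p)\cos(nq)$ lies in the null space individually. Hence for $n\geq1$ the profile $\varphi_n$ solves the Sturm--Liouville problem \eqref{shearsturmliouvilleode}, while $\varphi_0$ solves the zero-mode problem carrying the nonlocal condition $M(0)=d(M)$ that was treated just before \eqref{shearsturmliouvilleode}. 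Elliptic regularity (Theorem~\ref{ellipticregtheorem}) endows the $\varphi_n$ with enough smoothness to be admissible, so that $\varphi_n\in\mathscr{A}$ for $n\geq1$.

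The first step is to discard the higher modes. If $\varphi_n\not\equiv0$ solved \eqref{shearsturmliouvilleode}, then multiplying the interior equation by $\varphi_n$, integrating by parts over $(p_0,p_1)\cup(p_1,0)$, and using the jump condition $\jump{a^3M_p}=g\jump{\rho}M(p_1)$ together with the Dirichlet conditions at $p_0$ and $0$ yields exactly $\mathscr{R}(\varphi_n;\lambda^*)=-n^2$. But by Lemma~\ref{shearexistenceminimizer} the value $\lambda^*$ is characterized by $\nu(\lambda^*)=\min_{\mathscr{A}}\mathscr{R}(\cdot;\lambda^*)=-1$, so $-n^2\geq-1$, which forces $n=1$; thus $\varphi_n\equiv0$ for every $n\geq2$. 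The zero mode is handled by the computation recorded before \eqref{shearsturmliouvilleode}: a nontrivial zero-mode solution exists only when $\lambda^3=-g\jump{\rho}(p_1-p_0)$, i.e. $\lambda=\lambda_0$, and Lemma~\ref{shearexistenceminimizer} guarantees $\lambda^*\neq\lambda_0$, so $\varphi_0\equiv0$ as well.

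It remains to check that the $n=1$ solution space of \eqref{shearsturmliouvilleode} is exactly one-dimensional, so that $\varphi_1$ is a scalar multiple of a fixed generator $\varphi^*$. I would argue by elementary ODE theory: in each subinterval the interior equation $(a^3M_p)_p=aM$ is a regular second-order linear ODE, so the condition $M(0)=0$ leaves a one-parameter family $c_1M^{(1)}$ on $(p_1,0)$ and $M(p_0)=0$ leaves a one-parameter family $c_2M^{(2)}$ on $(p_0,p_1)$; continuity at $p_1$ and the jump condition then become two homogeneous linear equations in $(c_1,c_2)$. Since a nontrivial $M^{(i)}$ cannot vanish at both endpoints of its subinterval (testing the ODE against $M^{(i)}$ and integrating by parts forces $M^{(i)}\equiv0$), the coefficient matrix of these equations is not identically zero; when the local bifurcation condition \eqref{lbcPn} holds it has a nontrivial kernel, hence rank exactly one, and the solution space is one-dimensional. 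Combining these facts, $\mathcal{N}(\mathcal{F}_w(\lambda^*,0))=\mathrm{span}\{\varphi^*(p)\cos q\}$, which is the assertion of the lemma.

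The main obstacle is not any single estimate but the bookkeeping around the variational step: one must be careful that it is the \emph{minimality} of $\nu(\lambda^*)=-1$, and not merely the statement that $-1$ is an eigenvalue at $\lambda^*$, that rules out the modes $n\geq2$; and one must separately supply the short ODE rank argument above that the $n=1$ eigenspace is genuinely one-dimensional, a point that was automatic from the explicit eigenfunctions in the irrotational case but now has to be argued.
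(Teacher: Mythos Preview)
Your proof is correct and follows essentially the same approach as the paper: cosine expansion, zero mode ruled out because $\lambda^*\neq\lambda_0$, modes $n\geq2$ ruled out via the variational characterization $\nu(\lambda^*)=-1$, leaving only $n=1$. The one place you go beyond the paper is in supplying the short ODE rank argument that the $n=1$ eigenspace of \eqref{shearsturmliouvilleode} is genuinely one-dimensional; the paper simply asserts ``the nullspace is generated by $\varphi_1=\varphi^*$'' without justifying this, so your added paragraph is a useful supplement rather than a deviation.
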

\begin{proof}
Let $\varphi$ be an element of the nullspace.  By the evenness built into the definition of $X$, we may expand $\varphi$ via a cosine series:
\[ \varphi(q,p) = \sum_{n=0}^\infty \varphi_n(p) \cos{(nq)}.\]
It follows that, 
\[ \mathcal{F}_w(\lambda^*,0)(\varphi_n(p) \cos{(nq)}) = 0, \qquad n \geq 0.\]
Equivalently, we must have that $\varphi_n$ solves \eqref{shearsturmliouvilleode}.  By Lemma \ref{shearexistenceminimizer} and the definition of $\lambda^*$, we know that $\varphi_1$ is nontrivial.  We have already seen that there are no nontrivial solutions of \eqref{shearsturmliouvilleode} with $n = 0$, and hence $\varphi_0$ must vanish identically.  If $\varphi_n \not\equiv 0$ for some $n > 1$, then it belongs to the admissible set $\mathscr{A}$ and hence $\mathscr{R}(\varphi_n; \lambda^*) = -n^2 < -1$.    This contradicts the definition of $\lambda^*$ as the minimizer.  Thus all the $\varphi_n$ with $n > 1$ vanish identically.  We conclude that the nullspace is generated by $\varphi_1= \varphi^*$.  \qquad
\end{proof}

\begin{lemma}[Range] \label{shearrangelemma} $\mathcal{A} = (\mathcal{A}_1, \mathcal{A}_2, \mathcal{A}_3, \mathcal{A}_4, \mathcal{A}_5) \in Y$ is in the range of $\mathcal{F}_w(\lambda^*,0)$ if and only if it satisfies the following orthogonality condition
 \be \label{shearrangecondition} \int\!\!\!\int_{D_1} a^3 \mathcal{A}_1 \varphi^* \, dq \, dp + \int\!\!\!\int_{D_2} a^3 \mathcal{A}_2 \varphi^* \, dq \, dp + \frac{1}{2} \int_{I} \mathcal{A}_3 \varphi^* \, dq + \int_T a^3 \mathcal{A}_4 \varphi_p^* \, dq = 0. \ee
\end{lemma}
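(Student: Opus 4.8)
The plan is to transcribe the proof of Lemma~\ref{idealrangelemma} almost verbatim, as the authors anticipate in the remark preceding the range lemmas; I would flag the three places where atmospheric vorticity forces a change. The first is purely algebraic: writing $a=a(\cdot\,;\lambda^*)$ as in \eqref{shearlinearproblem} and using the laminar identities $a_p=\gamma/a$ in $D_1$ and $a_p=0$ in $D_2$, one checks (recall \eqref{shearFw}) that
\[ a^3\,\mathcal{F}_{iw}(\lambda^*,0)\varphi=\bigl(a^3\varphi_p\bigr)_p+\bigl(a\varphi_q\bigr)_q\qquad\textrm{in }D_i,\ i=1,2, \]
so that $\mathcal{F}_{iw}(\lambda^*,0)$ is formally self-adjoint for the weight $a^3$, with conormal derivative $a^3\varphi_p$ across $I$; this is exactly the structure used in the irrotational case, where $a$ merely happened to be piecewise constant. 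I would also record at the outset that, by Lemma~\ref{shearnullspacelemma}, $\varphi^*=M(p)\cos q$ solves the homogeneous interior equation, vanishes on $T$ and $B$, satisfies $\jump{a^3\varphi^*_p}=g\jump{\rho}\varphi^*$ on $I$, and has $\int_T a^3\varphi^*_p\,dq=0$.

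For necessity, given $\varphi\in X$ with $\mathcal{F}_w(\lambda^*,0)\varphi=\mathcal{A}$, I would form $\int\!\!\!\int_{D_1}a^3\mathcal{A}_1\varphi^*+\int\!\!\!\int_{D_2}a^3\mathcal{A}_2\varphi^*$ and integrate by parts twice, transferring both derivatives onto $\varphi^*$. Periodicity removes the lateral terms; the bulk integral vanishes since $\varphi^*$ lies in the kernel of the interior operator; the contributions over $B$ vanish because $\varphi(p_0)=\varphi^*(p_0)=0$; the contribution over $T$ collapses to $\int_T a^3\mathcal{A}_4\varphi^*_p\,dq$ once one inserts $\varphi|_T-d(\varphi)=\mathcal{A}_4$ and uses $\int_T a^3\varphi^*_p\,dq=0$; and the contributions over $I$ combine, via the two transmission conditions above, into $\tfrac12\int_I\mathcal{A}_3\varphi^*\,dq$. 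This reproduces the computation \eqref{idealrangecomputation}, and \eqref{shearrangecondition} drops out.

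For sufficiency, assume \eqref{shearrangecondition}. I would first dispose of the zero mode by applying $\mathbb{P}$: as in Lemma~\ref{idealrangelemma} the $q$-averaged problem is a pair of piecewise-linear two-point problems matched at $I$, uniquely solvable precisely when $-g\jump{\rho}(p_1-p_0)\neq(\lambda^*)^3$, i.e.\ $\lambda^*\neq\lambda_0$ (cf.\ \eqref{deflambda0}), which is part of Lemma~\ref{shearexistenceminimizer}. Subtracting it off, I restrict to $\mathcal{A}\in Y_0=(1-\mathbb{P})Y$ and seek $\varphi\in X_0=(1-\mathbb{P})X$, so $d(\varphi)=0$. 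Next, introduce the regularized operators $\mathcal{L}^{(\epsilon)}$ obtained by adding $\epsilon$ to the two interior components of $\mathcal{F}_w(\lambda^*,0)$ and solve $\mathcal{L}^{(\epsilon)}\varphi^{(\epsilon)}=\mathcal{A}$. Here is the second change: rather than solving the approximate homogeneous problem explicitly (impossible once $\gamma\not\equiv0$), I would argue by an energy identity. If $\phi\in X_0$ solves the homogeneous $\mathcal{L}^{(\epsilon)}$-problem, expand $\phi=\sum_{n\geq1}\phi_n(p)\cos(nq)$; testing the mode-$n$ equation $(a^3\phi_{n,p})_p=(\epsilon a^3+n^2 a)\phi_n$ (with $\jump{a^3\phi_{n,p}}=g\jump{\rho}\phi_n$, $\phi_n(0)=\phi_n(p_0)=0$) against $\phi_n$ and integrating by parts gives
\[ -n^2=\mathscr{R}(\phi_n;\lambda^*)+\epsilon\,\frac{\int_{p_0}^0 a^3\phi_n^2\,dp}{\int_{p_0}^0 a\phi_n^2\,dp}>\mathscr{R}(\phi_n;\lambda^*)\geq\nu(\lambda^*)=-1, \]
so $n<1$, a contradiction; hence for every $\epsilon>0$ the homogeneous problem has only the trivial solution, and the Fredholm alternative (Theorem~\ref{ellipticregtheorem}) produces a unique $\varphi^{(\epsilon)}$. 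Taking $\epsilon_n\downarrow0$ and setting $\varphi^{(n)}:=\varphi^{(\epsilon_n)}$, the rest is the verbatim argument of Lemma~\ref{idealrangelemma}: testing $\mathcal{L}^{(\epsilon_n)}\varphi^{(n)}=\mathcal{A}$ against $a^3\varphi^*$ and invoking \eqref{shearrangecondition} forces $\int\!\!\!\int_{D_1\cup D_2}a^3\varphi^*\varphi^{(n)}=0$ for all $n$; a contradiction/compactness argument (rescale a hypothetically unbounded sequence, extract via Schauder estimates and compact embedding a classical nonzero solution $\phi$ of $\mathcal{F}_w(\lambda^*,0)\phi=0$, hence $\phi=\nu\varphi^*$ with $\nu\neq0$ by Lemma~\ref{shearnullspacelemma}, yet $\int\!\!\!\int a^3\varphi^*\phi=0$) yields a uniform bound for $\{\varphi^{(n)}\}$ in $C_{\textrm{per}}^{1+\alpha}(\overline{D_1})\cap C_{\textrm{per}}^{1+\alpha}(\overline{D_2})$; passing to a weak limit and invoking elliptic regularity (Theorem~\ref{ellipticregtheorem}) gives the desired $\varphi\in X$.

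The genuinely delicate step is, exactly as in the irrotational case, the uniform $C^{1+\alpha}$ bound for $\{\varphi^{(n)}\}$ — the only step that is PDE-theoretic rather than bookkeeping — but it transfers word for word, since it rests on Schauder estimates, compactness, and the one-dimensionality of $\mathcal{N}(\mathcal{F}_w(\lambda^*,0))$, all insensitive to the lower-order term $3\gamma H_p^2\partial_p$. The only place requiring a new idea is the treatment of the regularized homogeneous problem, where the explicit computation of the irrotational proof is replaced by the energy identity above together with the variational meaning of $\nu(\lambda^*)$ (Lemma~\ref{variationalcharacterizationlemma}); once that substitution is made the whole argument goes through unchanged.
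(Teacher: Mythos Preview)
Your proposal is correct and follows the paper's own approach: the paper proves necessity by the same self-adjoint-form integration by parts you describe, and for sufficiency simply writes that it ``follows from a straightforward generalization of the argument given in Lemma~\ref{idealrangelemma} which we omit.'' Your write-up therefore supplies what the paper leaves out, and the one substantive addition --- replacing the explicit ODE computation in Claim~1 by the energy identity $-n^2=\mathscr{R}(\phi_n;\lambda^*)+\epsilon\int a^3\phi_n^2/\int a\phi_n^2>\nu(\lambda^*)=-1$ --- is exactly the right adaptation to the vortical setting, and in fact improves on the paper's ``generic $\epsilon$'' to ``every $\epsilon>0$.''

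One small slip: the zero-mode problem in $D_1$ is \emph{not} a piecewise-linear two-point problem once $\gamma\not\equiv0$; the averaged equation there reads $\partial_p(a^3\partial_p\overline{\varphi})=a^3\overline{\mathcal{A}}_1$ with $a=\Gamma_{\textrm{rel}}(p)$ nonconstant. This does not affect your conclusion --- unique solvability still reduces to triviality of the homogeneous zero-mode problem, which the paper establishes (just before \eqref{shearsturmliouvilleode}) to fail only at $\lambda=\lambda_0$, and Lemma~\ref{shearexistenceminimizer} gives $\lambda^*\neq\lambda_0$ --- but the description should be corrected.
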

\begin{proof}  First assume that $\mathcal{A}$ is in the range of $\mathcal{F}_w(\lambda^*,0)$.  Then there exists $\varphi \in X$ such that $\mathcal{F}_w(\lambda^*,0) \varphi = \mathcal{A}$.  Writing $\mathcal{F}_{iw}(\lambda^*,0)$ for $i = 1,2$ in self-adjoint form, we have
\[ a^{-3} \partial_p  \left(a^3 \partial_p \varphi^{(i)}\right) + a^{-2} \partial_q^2 \varphi^{(i)} = \mathcal{A}^{(i)}, \qquad i = 1,2.\]
Thus
\begin{align*} \left( a^3 \varphi^*, \mathcal{A}_1 \right)_{L^2(D_1)} + \left(a^3 \varphi^*, \mathcal{A}_2\right)_{L^2(D_2)} & = \int\!\!\!\int_{D_1}  \left( (a^3\varphi_{p})_p + a \varphi_{qq} \right) \varphi^* \, dq \, dp \\
& \qquad + \int\!\!\!\int_{D_2}  \left( (a^3\varphi_{p})_p + a \varphi_{qq} \right)  \varphi^* \, dq \, dp \\
& = - \int\!\!\!\int_{D_1}  a^3 \varphi_{p}  \varphi_p^* \, dq \, dp \\
& \qquad - \int\!\!\!\int_{D_2}  a^3 \varphi_{p}  \varphi_p^* \, dq \, dp  - \int_I \jump{a^3 \phi_p \varphi^*} \, dq \\
& \qquad\qquad  + \int\!\!\!\int_{D_1 \cup D_2}  a \varphi  \varphi_{qq}^* \, dq \, dp. \end{align*}
Here we have exploited the periodicity and the fact that $\varphi^*$ vanishes identically on $T$.   This is precisely what we have in \eqref{idealrangecomputation}, and, since the conditions on the interface are unchanged,  proceeding as in that proof shows that \eqref{shearrangecondition} is necessary.  

The sufficiency of \eqref{shearrangecondition} follows from a straightforward generalization of the argument given in Lemma \ref{idealrangelemma} which we omit.  \qquad
\end{proof}

\begin{lemma}[Transversality] \label{sheartransverselemma} The following transversality condition holds:
\be \label{idealtranscond} \mathcal{F}_{\lambda w}(\lambda^*,0) \varphi^* \not\in \mathcal{R}(\mathcal{F}_w(\lambda^*,0)). \ee
Here $\varphi^*$ denotes a generator of $\mathcal{N}(\mathcal{F}_w(\lambda^*,0))$.  
\end{lemma}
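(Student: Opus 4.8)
The plan is to run the proof of Lemma~\ref{idealtransverselemma} again, the new feature being that the atmospheric vorticity turns out to be invisible to the relevant computation. By Lemma~\ref{shearrangelemma}, the transversality condition \eqref{idealtranscond} is equivalent to the assertion that $\mathcal{A} := \mathcal{F}_{\lambda w}(\lambda^*,0)\varphi^*$ fails the orthogonality relation \eqref{shearrangecondition}; so, writing
\[ \Xi := \int\!\!\!\int_{D_1} a^3 \mathcal{A}_1 \varphi^* \, dq\, dp + \int\!\!\!\int_{D_2} a^3 \mathcal{A}_2 \varphi^* \, dq\, dp + \frac{1}{2}\int_I \mathcal{A}_3 \varphi^* \, dq + \int_T a^3 \mathcal{A}_4 \varphi_p^* \, dq, \]
the entire matter reduces to showing $\Xi \neq 0$.

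First I would pin down where $\mathcal{F}$ actually depends on $\lambda$. From the explicit laminar family \eqref{shearlaminarformula}, the air-region profile $H^{(1)}(\cdot;\lambda)$ depends on $\lambda$ only through the additive constant $(p_1-p_0)/\lambda$, so that $H_p^{(1)}$ and $H_{pp}^{(1)}$ — as well as the prescribed function $\gamma$ — are all independent of $\lambda$; the same is true of the nonlocal condition on $T$. Consulting \eqref{shearFw}, this means that $\mathcal{F}_{1w}(\lambda,0)$ and $\mathcal{F}_{4w}(\lambda,0)$ carry no $\lambda$-dependence, so that $\mathcal{A}_1 = \mathcal{F}_{1\lambda w}(\lambda^*,0)\varphi^* = 0$ and $\mathcal{A}_4 = \mathcal{F}_{4\lambda w}(\lambda^*,0)\varphi^* = 0$. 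Hence only the water region and the interface survive:
\[ \Xi = \int\!\!\!\int_{D_2} a^3 \mathcal{A}_2 \varphi^* \, dq\, dp + \frac{1}{2}\int_I \mathcal{A}_3 \varphi^* \, dq. \]

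Next I would exploit that $\gamma$ vanishes throughout the water region. On $D_2$ and on $I$ the operators $\mathcal{F}_{2w}$ and $\mathcal{F}_{3w}$ therefore coincide with their irrotational counterparts from \S\ref{liddedidealsection}, and $\varphi^*$ restricted to $D_2$ solves the same constant-coefficient ODE as in the irrotational case, so $(\varphi^*)^{(2)}(p) = \mu\sinh\!\big((p-p_0)/\lambda^*\big)$ for an explicit constant $\mu$ (necessarily nonzero, else $\varphi^*$ would vanish identically). One then obtains, exactly as in the proof of Lemma~\ref{idealtransverselemma},
\[ \mathcal{F}_{2\lambda w}(\lambda^*,0)\varphi^* = -\frac{2}{(\lambda^*)^3}\varphi_{qq}^*, \qquad \mathcal{F}_{3\lambda w}(\lambda^*,0)\varphi^* = \big(-3(\lambda^*)^2 (\varphi_p^*)^{(2)}\big)_I, \]
and, using $-\partial_q^2\varphi^* = \varphi^* = (\lambda^*)^2\partial_p^2\varphi^*$ in $D_2$ together with an integration by parts, arrives at
\[ \Xi = \frac{1}{2}\int\!\!\!\int_{D_2}(\varphi^*)^2\, dq\, dp - \frac{3}{2}(\lambda^*)^2 \int\!\!\!\int_{D_2}(\varphi_p^*)^2 \, dq\, dp. \]
Substituting the explicit hyperbolic sine gives $(\varphi^*)^2 - 3(\lambda^*)^2(\varphi_p^*)^2 = -\mu^2\big(1 + \cosh^2((p-p_0)/\lambda^*)\big)$ on $D_2$, whence $\Xi < 0$, as desired.

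The one step calling for genuine attention is the first: confirming that the $\gamma$-nonlinearity in the air does not enter $\mathcal{F}_{\lambda w}(\lambda^*,0)$ at all. This is a short calculation off the explicit laminar profile, but it is the conceptual heart of the matter — once it is secured, Lemma~\ref{sheartransverselemma} is literally a rerun of Lemma~\ref{idealtransverselemma}, with no remaining obstacle.
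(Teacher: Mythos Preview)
Your proposal is correct and follows essentially the same route as the paper: both reduce to showing $\Xi \neq 0$ via the range characterization, observe that $\mathcal{F}_{1\lambda w}(\lambda^*,0)\varphi^* = 0$ and $\mathcal{F}_{4\lambda w}(\lambda^*,0)\varphi^* = 0$ because the air-region laminar profile depends on $\lambda$ only through an additive constant, and then rerun the irrotational computation in $D_2$ verbatim to obtain $\Xi < 0$. The only cosmetic difference is that the paper first writes the generic formula $\mathcal{F}_{2\lambda w}(\lambda^*,0)\varphi^* = -\tfrac{2}{(\lambda^*)^3}\varphi_{qq}^* - \tfrac{6\gamma}{(\lambda^*)^3}\varphi_p^*$ and then drops the $\gamma$-term (since $\gamma \equiv 0$ in the water region), whereas you invoke $\gamma|_{D_2} = 0$ at the outset; the substance is identical.
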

\begin{proof} In view of Lemma \ref{idealrangelemma}, it suffices to show that $\mathcal{A} := \mathcal{F}_{\lambda w}(\lambda^*,0) \varphi^*$ fails to satisfy the orthogonality condition \eqref{shearrangecondition}.  That is, if we put 
\[ \Xi := \int\!\!\!\int_{D_1} a^3 \mathcal{A}_1 \varphi^* \, dq \, dp + \int\!\!\!\int_{D_2} a^3 \mathcal{A}_2 \varphi^* \, dq \, dp + \frac{1}{2} \int_{I} \mathcal{A}_3 \varphi^* \, dq + \int_T a^3 \mathcal{A}_4 \varphi_p^* \, dq,\]
we must prove $\Xi \neq 0$. To do this, we first compute
\begin{align*}
\mathcal{F}_{1\lambda w}(\lambda^*,0) \varphi^* & = 0, \\
 \mathcal{F}_{2\lambda w}(\lambda^*,0) \varphi^* & = -\frac{2}{(\lambda^*)^3}  \varphi_{qq}^* -\frac{6\gamma}{(\lambda^*)^3} \varphi_p^*, \\
 \mathcal{F}_{3\lambda w}(\lambda^*,0) \varphi^* & =  \left(-3(\lambda^*)^2 (\varphi_p^*)^{(2)} \right)_I, \\
 \mathcal{F}_{4\lambda w}(\lambda^*,0) \varphi^* & = 0.
\end{align*}
By the equation satisfied by $\varphi^*$ \eqref{shearsturmliouvilleode}, we see that 
\[ -\partial_q^2 \varphi^* = \varphi^* = a^{-1} \partial_p  (a^3 \partial_p \varphi^*).\]
Thus, 
\[ (\lambda^*)^2  ( \varphi^* \varphi_p^*)_p = (\lambda^*)^2 (\varphi_p^*)^2 + (\varphi^*)^2, \qquad \textrm{in } D_2.\]
From this we deduce,
\be \begin{split} \int\!\!\!\int_{D_2} a^3 \mathcal{A}_2 \varphi^* \, dq \, dp &= 2 \int\!\!\!\int_{D_2} (\varphi^*)^2  \, dq\, dp -6 \gamma \int\!\!\!\int_{D_2} \varphi^* \varphi_p^* \, dq \, dp \\
& = 2 \int\!\!\!\int_{D_2} (\varphi^*)^2  \, dq\, dp, \end{split} \label{sheartrans1} \ee
and 
\be \begin{split} \frac{1}{2} \int_I \mathcal{A}_3 \varphi^* \, dq  &= -\frac{3(\lambda^*)^2}{2} \int_I (\varphi_p^*)^{(2)} \varphi^* \, dq \\
& = -\frac{3 (\lambda^*)^2}{2} \int\!\!\!\int_{D_2} (\varphi_p^*)^2 \, dq\, dp - \frac{3}{2} \int\!\!\!\int_{D_2} (\varphi^*)^2 \,dq \, dp.
\end{split} \label{sheartrans2} \ee
In light of \eqref{sheartrans1}--\eqref{sheartrans2}, and the calculated values of the remaining components of $\mathcal{A}$, 
\[ \Xi = \frac{1}{2} \int\!\!\!\int_{D_2} (\varphi^*)^2 \, dq \, dp - \frac{3}{2} (\lambda^*)^2 \int\!\!\!\int_{D_2} (\varphi_p^*)^2 \, dq\, dp. \]
Since the equation satisfied by $(\varphi^*)^{(2)}$ is the same as for the irrotational atmosphere case treated in the previous section, we know that $\Xi < 0$ by the same argument as Lemma \ref{idealtransverselemma} (cf. \eqref{idealtrans3}).  This completes the proof of the lemma. \qquad
\end{proof}

\pfthm{\ref{shearlocalbifurcationtheorem}}  Under the assumption that the compatibility condition \eqref{shearcompatibilitycond} and the local bifurcation condition \eqref{lbcnu} hold, Lemmas \ref{shearnullspacelemma}--\ref{sheartransverselemma} verify that the hypotheses for Theorem \ref{crandallrabinowitz} are satisfied.  The conclusions of Theorem \ref{shearlocalbifurcationtheorem} follow immediately.  \qquad \endproof

\section{Local bifurcation with an unbounded irrotational atmosphere} \label{unboundedidealsection}
 In this section and the next we consider the situation where the atmosphere region $\Omega^{(1)}$ has infinite vertical extent.     We begin by supposing that the flow in both the water and air are irrotational.   In the moving frame, the governing equations for the stream function formulation are as in \eqref{psieq} (taking $\rho^{(i)}$ to be constants, $d = 1$, and $\gamma \equiv 0$), with the only difference manifesting in the boundary condition at infinity:
\be \left\{ \begin{array}{ll} \Delta \psi  = 0, & \textrm{in } \Omega, \\
\displaystyle \jump{|\nabla \psi|^2} + 2g \jump{\rho} \left(\eta+1\right)   - Q = 0, & \textrm{on } y = \eta(x), \\
\psi = 0, & \textrm{on } y = \eta(x), \\
\psi = -p_0, &  \textrm{on } y = -1, \\
\nabla^\perp \psi \to(-\lambda, 0) \textrm{ uniformly in $x$} &  \textrm{as } y \to \infty.\end{array} \right. \label{unboundedidealpsieq} \ee
Here $\lambda = \sqrt{\rho^{(1)}}(U-c)$ is the speed of the undisturbed wind in the co-moving frame multiplied by the square root o the density of the air.  

The change in the domain necessitates a change in the spaces in which we seek solutions.  In particular, we must specify the behavior as $y \to \infty$.   As can be seen in \eqref{unboundedidealpsieq}, for the irrotational regime we are interested the case where $(u,v) \to (U,0)$, for some constant $U < c$ as $y \to \infty$, which is equivalent to requiring that $\nabla^\perp \psi$ limits to $(-\lambda, 0)$, for some positive constant $\lambda$.   With that in mind, we define 
\begin{align*}
\mathscr{S}_0 & := \{ (u,v,\varrho, \eta) \in \mathscr{S} : \exists U < c,  (u,v) \to (U, 0) \textrm{ uniformly as } y \to \infty \} \\
\mathscr{S}_0^\prime & := \{ (Q, \psi, \eta) \in \mathscr{S}^\prime : \exists \lambda > 0, \nabla^\perp \psi \to (-\lambda, 0) \textrm{ uniformly as } y \to \infty \}.  \end{align*}

\begin{theorem} [Local bifurcation with unbounded irrotational atmosphere]  \label{unboundedideallocalbifurcationtheorem} Suppose that the volumetric mass flux in the water region $p_0$ and the density jump $\jump{\rho}$ satisfy the local bifurcation condition 
\be p_0^2 \coth{1} + g\jump{\rho} > 0, \label{unboundedideallbc} \ee
then the following statements are true.  
\begin{romannum}
\item  There exists a continuous curve of non-laminar solutions to the stream function equation for irrotational flow in the air and water, and unbounded atmosphere region  \eqref{unboundedidealpsieq}
\[ \mathcal{C}_{\mathrm{loc}}^{\prime} = \{ (Q(s),\psi(s), \eta(s)) \in \mathscr{S}_0^\prime : |s| < \epsilon \}, \]
for $\epsilon > 0$ sufficiently small, such that $(Q(0), \psi(0)) = (Q^*, \Psi_0^*)$, and, in a sufficiently small neighborhood of $(Q^*, \Psi_0^*)$ in $\mathbb{R}\times X$, $\mathcal{C}_{\mathrm{loc}}^\prime$ comprises all non-laminar solutions.  In particular, we have
\[ \eta(s)(\cdot) = s \cos{(\cdot)} + o(s) \qquad \textrm{in } C_{\mathrm{per}}^{2+\alpha}(\mathbb{R}).\] 

\item  There is a corresponding continuous curve 
\[ \mathcal{C}_{\mathrm{loc}} = \{ (Q(s), u(s), v(s), \varrho(s), P(s), \eta(s)) \in \mathscr{S}_0 : |s| < \epsilon \}\]
of small amplitude solutions to the Eulerian problem which likewise captures all non-laminar solutions in a sufficiently small neighborhood of the point of bifurcation.  
\end{romannum}
\end{theorem}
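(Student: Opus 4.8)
The plan is to put the unbounded problem into the Crandall--Rabinowitz framework (Theorem \ref{crandallrabinowitz}), exactly as in \S\ref{liddedidealsection}, after first removing the non-compactness caused by the infinite air column. First I would construct the laminar family: seeking $\psi = \Psi_0(y;\lambda)$ with $\eta \equiv 0$ in \eqref{unboundedidealpsieq} forces $\Psi_0$ to be piecewise affine, with $\Psi_0^{(2)}(y) = p_0 y$ in the water (independent of $\lambda$, since $p_0$ and $d = 1$ are fixed) and $\Psi_0^{(1)}(y) = \lambda y$ in the air; the Bernoulli jump condition then gives $Q = Q(\lambda) := \lambda^2 - p_0^2 + 2g\jump{\rho}$. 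In contrast with the lidded case (Lemmas \ref{ideallaminarlemma} and \ref{idealloclambda0lemma}), $Q$ is here a \emph{globally} invertible function of $\lambda$, so no auxiliary invertibility argument is needed; I would take $\lambda$ as the bifurcation parameter and translate back to $Q$ at the very end. Linearizing \eqref{unboundedidealpsieq} about $(\Psi_0(\cdot;\lambda),0,Q(\lambda))$ and separating variables $m = M(y)\cos(nx)$, the air contribution is the decaying harmonic mode $M^{(1)}(y) = A\,e^{-ny}$ while $M^{(2)}(y) = B\sinh(n(y+1))$ vanishes at the bed; imposing continuity at $y = 0$ together with the linearized jump condition yields a dispersion relation of the same shape as \eqref{idealdispersionrelation}, but with the air $\coth$ replaced by $1$, whose solvability for $n = 1$ is precisely \eqref{unboundedideallbc}. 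This picks out $\lambda = \lambda^*$ and a generator $\varphi^*$ of the prospective kernel whose interface profile is $\cos x$.

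The crux is eliminating the unbounded region. For a fixed interface $\eta \in C_{\mathrm{per}}^{2+\alpha}$ and fixed $\lambda$, the air problem ($\Delta\psi^{(1)} = 0$ in $\{y > \eta(x)\}$, $\psi^{(1)} = 0$ on $y = \eta$, $\nabla^\perp\psi^{(1)} \to (-\lambda,0)$ as $y \to \infty$) has a unique solution: after subtracting the affine profile and flattening $y \mapsto y - \eta(x)$, one is reduced to a Poisson problem on the fixed half-strip $\{0 < x < L,\ 0 < y\}$ for a bounded harmonic function, which therefore decays exponentially, and both this function and the interface trace of $\nabla\psi^{(1)}$ depend real-analytically on $(\eta,\lambda)$ in the relevant H\"older spaces. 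Writing $\mathcal{N}[\eta,\lambda]$ for the resulting Dirichlet-to-Neumann data on $\{y = \eta\}$, its linearization acts on the $n$-th Fourier mode by multiplication by $n$ --- this is exactly the ``air $\coth \mapsto 1$'' seen above. Substituting $\mathcal{N}[\eta,\lambda]$ for $\nabla\psi^{(1)}|_{y=\eta}$ in the Bernoulli jump condition removes $\psi^{(1)}$ from the system, leaving a free-boundary elliptic problem for $(\psi^{(2)},\eta,Q)$ on the \emph{bounded} water domain $\Omega_2$ --- a fixed strip after flattening --- with a nonlocal boundary term on $I = \{y = \eta\}$. Compactness is thereby restored.

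With the reduction in hand, the remaining steps run parallel to \S\ref{liddedidealsection}. I would set $\psi^{(2)} = \Psi_0^{(2)} + m$, $\eta = \zeta$, recast the reduced system as $\mathcal{G}(\lambda,w) = 0$ for $w = (m,\zeta)$ in H\"older--Banach spaces modeled on $X$ and $Y$ of \S\ref{liddedidealsection} (with $\zeta$ carrying the mean-zero normalization \eqref{normalsurface}, which also forces the $n = 0$ mode of $\zeta$ to vanish), and then verify the hypotheses of Theorem \ref{crandallrabinowitz}. Parts (i) and (ii) of that theorem are immediate. For (iii): the null space of $\mathcal{G}_w(\lambda^*,0)$ is one-dimensional, by the cosine-series argument of Lemma \ref{idealnullspacelemma} --- the $n = 1$ mode is nontrivial by the choice of $\lambda^*$, the $n \geq 2$ modes vanish because the dispersion relation has a unique root, and the $n = 0$ mode is trivial by the normalization just mentioned; the range of $\mathcal{G}_w(\lambda^*,0)$ is the weighted-orthogonal complement of $\varphi^*$, by the method-of-continuity, Schauder-estimate, and compactness argument of Lemma \ref{idealrangelemma}, which is now legitimate because $\Omega_2$ is bounded. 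For (iv): the transversality $\mathcal{G}_{\lambda w}(\lambda^*,0)\varphi^* \notin \mathcal{R}(\mathcal{G}_w(\lambda^*,0))$ follows as in the proof of Lemma \ref{idealtransverselemma}, the relevant pairing reducing to an explicitly computable, manifestly nonzero combination of weighted integrals of $(\varphi^*)^2$ and $(\varphi_y^*)^2$ over $\Omega_2$. Crandall--Rabinowitz then produces the curve $\mathcal{C}_{\mathrm{loc}}^\prime$ together with the stated expansion $\eta(s) = s\cos(\cdot) + o(s)$; re-parameterizing by $Q$ (legitimate since $Q$ is invertible in $\lambda$) and extending $\psi^{(1)}$ back into the air via the harmonic extension yields a genuine element of $\mathscr{S}_0^\prime$, which is part (i), and an adaptation of Lemma \ref{equivalencelemma} to the unbounded domain then gives the Eulerian curve $\mathcal{C}_{\mathrm{loc}}$ of part (ii).

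The single genuinely new difficulty, compared with the lidded analysis, is the construction and mapping properties of the Dirichlet-to-Neumann operator $\mathcal{N}$ --- equivalently, choosing the function spaces so as to encode the decay at $y = +\infty$ correctly and confirming that the reduced linearized operator is Fredholm of index zero. Once that is in place, the null-space, range, and transversality arguments transfer essentially verbatim from \S\ref{liddedidealsection}, since the reduced problem is posed on a bounded domain with coefficients that are smooth away from the nonlocal interface term.
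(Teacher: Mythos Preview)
Your proposal is correct in outline and would succeed, but it takes a noticeably different route from the paper's. You eliminate only the air region via a Dirichlet-to-Neumann operator $\mathcal{N}[\eta,\lambda]$, then retain the water stream function as a genuine PDE unknown and rerun the \S\ref{liddedidealsection} machinery (null-space lemma, range lemma via method of continuity and Schauder estimates, transversality by integration by parts) on the resulting bounded-domain problem for the pair $(m,\zeta)$. The paper instead carries the reduction one step further: after flattening by the map $T$ of \eqref{unboundedidealdefT}, it solves \emph{both} the air and the water Dirichlet problems in terms of $\eta$ (equations \eqref{unboundedidealdefPsi1}--\eqref{unboundedidealdefPsi2}), so that the entire system collapses to the single scalar equation $\mathcal{G}(\lambda,\eta)=0$ on the interface, with $\eta\in C^{2+\alpha}_{\mathrm{per}}$ the only unknown. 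The payoff is that the linearization $\mathcal{G}_\eta(\lambda,0)$ is then explicitly the Fourier multiplier $2(p_0^2 D\coth D-\lambda^2 D+g\jump{\rho})$ (see \eqref{unboundedidealGmultiplier}), from which the Fredholm index, the one-dimensional kernel, the codimension-one range, and transversality all follow in a few lines by reading off the symbol (Lemma \ref{unboundedidealnullspacelemma} and the short proof of the theorem that follows it). Your approach recovers the same dispersion relation and the same $\lambda^*$, but pays for the extra unknown $m$ with the heavier PDE apparatus of Lemma \ref{idealrangelemma}; the paper's full reduction to a one-variable Fourier multiplier makes that apparatus unnecessary here.
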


 Unlike to the explicit size condition \eqref{shearsizecond} obtained in the rotational lidded regime, \eqref{unboundedideallbc} is both necessary and sufficient.  We caution, however, that \eqref{unboundedideallbc} is stated in dimensional variables, and so some additional work must be done before attempting to draw physical conclusions.   For instance, note that we are taking $d = 1$, effectively setting the length scale of the system to match the depth of the ocean (which is typically greater than $3000$ meters), and constructing minimally $2\pi$-periodic waves.   Clearly such waves are not typical.  

A simple generalization of Theorem \ref{unboundedideallocalbifurcationtheorem} can be made where we allow an arbitrary depth $d > 0$, and perturbations of the form $2\pi/k$, $k \geq 1$.     Let $k^*$ be the smallest natural number such that 
\be p_0^2 k^* \coth{(k^*d)} + g d^2 \jump{\rho} > 0.\label{unboundedidealgenerallbc} \ee
Then the conclusions of Theorem \ref{unboundedideallocalbifurcationtheorem} hold, with the only modification being that the non-laminar solutions are of the form 
\[ \eta(s)(x) = s \cos{(k ^*x)} + o(s).\]
and the depth of the water region is $d$.  Notice that $k^* = 1$ and $d = 1$ if and only if \eqref{unboundedideallbc} holds.

\subsection{The transformed problem} \label{unboundedidealtranssection}

At a mathematical level, the major difference in going from the
lidded and to the unbounded case is the loss of compactness in
the domain.  If we were to attempt to use semi-Lagrangian
coordinates and the height equation formulation, we would not
expect that the operator $\mathcal{F}$ to be Fredholm.  This is a
substantial technical obstacle, but one that has been grappled
with extensively in the literature of traveling waves in oceans
of infinite depth (cf., e.g.,
\cite{hur2006global,hur2008solitary}) and solitary waves (cf.,
e.g., \cite{beale1977solitary}).  Typically, the strategy is to
rely on concentration compactness arguments or a Nash--Moser
iteration scheme.  But aside from the inherent complexities of
these tools, were we to attempt this approach, we would still be
forced to require the absence of stagnation points and critical
layers in order to justify the Dubreil-Jacotin transformation.
This an especially restrictive assumption in the unbounded
atmosphere regime.  To see why, consider flows with a constant
nonzero vorticity in the air region, which is the subject of the
next section.  A laminar flow $(U, 0)$ of this type will
necessarily satisfy $|U| \to \infty$ as $y \to \infty$.  Thus,
depending on the sign of the vorticity,
critical layers are \emph{expected}, though they are  neutered (see the remark following
Lemma \ref{unboundedshearlaminarlemma}).

Rather than adopting semi-Lagrangian coordinates, therefore, we
shall employ a more robust (though less elegant) transformation
to fix the domain.  Let $\overline{\Omega_0} =
\overline{\Omega_0^{(1)}} \cup \overline{\Omega_0^{(2)}}$, where
\[ \overline{\Omega_0^{(1)}} =  [-\pi, \pi] \times [0, \infty), \qquad \overline{\Omega_0^{(2)}} =  [-\pi, \pi]  \times [-1, 0].\]
Suppose $\eta(\epsilon, \cdot)$ is a one-parameter family of free surfaces with $\eta(0, \cdot) \equiv 0$.  Then we may take $\Omega(\epsilon)$ and $\psi(\epsilon, \cdot)$ to be the corresponding families of fluid domains and stream functions, respectively.  Let $T = T(\epsilon, x, y) : \Omega(\epsilon) \to \Omega_0$ be a smooth diffeomorphism mapping $\Omega(\epsilon)$ to  $\Omega_0$ for each $\epsilon \geq 0$ such that 
\begin{align*} T(0, \cdot) &= \iota_{\Omega_0} \textrm{ (the identity map on $\Omega_0$)} \\
 T(\epsilon, x, \eta(x)) &= (x, 0) \\
  T(\epsilon, x, -1) &= -1\\
   [T(\epsilon, \cdot) - \iota_{\Omega(\epsilon)}] \to 0 \textrm{ as } y \to \infty, & \textrm{ uniformly in $x$ and $\epsilon$}. \end{align*}   
Any such map will serve the purpose of fixing the domain, but for simplicity we make a particular choice.  Suppose that in a neighborhood of the $x$-axis, $T$ is just the flattening map:
\be \begin{split} T_1(\epsilon, x,y) &:= \pi_1(x,y) := x, \\
 T_2(\epsilon, x, y)& := \frac{y-\eta(\epsilon, x)}{1+\eta(\epsilon, x)} \chi(\epsilon, y) + y(1-\chi(\epsilon, y)), \end{split}\label{unboundedidealdefT} \ee
 where $\chi$ is a fixed, smooth cutoff function with support on  $\{ (x,y) : y < \eta(\epsilon, x) + 2 \}$ and chosen so that  $T(\epsilon, \cdot)$ is a diffeomorphism.  Because we are fixing a representation for $T$, the only unknowns in the problem are $\psi$, $\eta$, and $Q$.  In particular, $T$ is determined entirely by $\eta$.  We remark that this map has been the basis for a number of studies of water waves.  To reference only the most immediately relevant results, we point out that Wahl\'en, Ehrstr\"om et al. rely on it in  their investigations of steady waves with critical layers (cf. \cite{wahlen2009critical,ehrnstrom2010multiple,ehrnstrom2010interior}).

For notational convenience, we denote the inverse of $T(\epsilon, \cdot)$ by $S(\epsilon, \cdot)$.  We shall also use the convention that the coordinates in $\Omega_0$ are in the variables $(\bar{x}, \bar{y})$, while the unbarred variables indicates coordinates in $\Omega(\epsilon)$, $\epsilon \neq 0$.  

Define the transformed stream function $\Psi$ by the relation
\[ \psi(\epsilon, \cdot) = [\Psi(\epsilon, \cdot)] \circ T(\epsilon, \cdot),\]
or, equivalently, 
\be \Psi(\epsilon, \cdot) := [\psi(\epsilon, \cdot)] \circ S(\epsilon, \cdot).\label{unboundeddefPsi} \ee
We shall suppose that $\Psi(0,\bar{x}, \bar{y}) = \Psi_0(\bar{y})$, meaning that the unperturbed flow is laminar.  

Let $\partial_i$ denote partial differentiation with respect to the $i$-th physical variable, for $i = 1,2$, and, for any function $f$ of two variables, let $f_{,i} := \partial_i f$.  Then an elementary computation confirms that, $-\Delta \psi(\epsilon,\cdot) = \gamma(\psi)$ in $\Omega(\epsilon)$ if and only if $\Psi(\epsilon, \cdot)$ satisfies the following equation in $\Omega_0$:
\be \mathcal{E}(\eta) \Psi := A_{ij} \partial_i \partial_j \Psi + B_i \partial_i \Psi + \gamma(\Psi) = 0 \qquad \textrm{in } \Omega_0, \, \textrm{for all $\epsilon$.} \label{unboundedPsisemilinear} \ee
Here we are adopting the summation convention over repeated indices, $\gamma$ represents the vorticity strength function for the flow ($\gamma = 0$ for irrotational flow), and 
\be \begin{split} A_{ij} = A_{ij}(\eta) & := [ (\partial_k T_i)(\epsilon, \cdot) (\partial_k T_j )(\epsilon, \cdot)] \circ S(\epsilon, \cdot) \\
 B_i = B_i(\eta) &:= [(\partial_j \partial_j T_i)(\epsilon, \cdot)] \circ S(\epsilon, \cdot). \label{unboundeddefAijBi} \end{split} \ee
Note that are we stating that $\mathcal{E}$ depends on $\eta$, while only $T$ occurs above.  This is valid because $T$ is determined uniquely by $\eta$ in view of \eqref{unboundedidealdefT}.

The jump condition on the boundary in \eqref{unboundedidealpsieq} can likewise be reformulated in terms of $\Psi$, resulting in the following
\be \jump{(C_{ij} \partial_i \Psi)^2} + 2g\jump{\rho} H - Q = 0 \qquad \textrm{on } \bar{y} = 0,\label{unboundedPsijump} \ee
where $H(\epsilon, \bar{x}, \bar{y}) := S_2(\epsilon, \bar{x}, \bar{y}) - S_2(\epsilon, \bar{x}, -1)$ is the height above the ocean bed in the terms of the coordinates $(\bar{x}, \bar{y})$, and 
\be C_{ij} = C_{ij}(\eta) := [(\partial_j T_i)(\epsilon, \cdot)] \circ S(\epsilon, \cdot). \label{unboundeddefCij} \ee  
The Dirichlet conditions for $\Psi$ derive from those for $\psi$ and the definition of $T$: 
\be \left\{\begin{array}{ll} \Psi = -p_0 & \qquad \textrm{on } \bar{y} = -1, \\
\Psi = 0  &\qquad \textrm{on } \bar{y} = 0.
\end{array} \right.\label{unboundedPsidirichlet} \ee

Lastly, the Neumann boundary condition at $y = +\infty$ translates to the same condition for $\Psi$, since we have that $T$ asymptotically approaches the identity.   
\be \nabla^\perp \Psi \to (-\lambda, 0) \textrm{ as $\bar{y} \to \infty$ uniformly in $\bar{x}$ and $\epsilon$}. \label{unboundedPsicirculationcond} \ee

In summary, we find that there exists a non-laminar solution to $(\psi, \eta, Q)$ to \eqref{unboundedidealpsieq} provided that there is exists $(\eta, \lambda, Q)$ for which there are nontrivial solutions $(\Psi, H, Q(\lambda))	$ to \eqref{unboundedPsisemilinear}--\eqref{unboundedPsicirculationcond}.  To make the latter problem tractable, we suppose that there is a unique solution $\Psi$ for given $(\eta, \lambda)$.  This is valid since, when $\eta = 0$, $T = \iota_{\Omega_0}$ and thus $\mathcal{E}(\lambda,0) = \Delta$.  It follows that  the operator is an isomorphism for $\epsilon$ in a neighborhood of $0$.   Explicitly, we \emph{define} $\Psi^{(1)} = \Psi^{(1)}(\lambda,\eta)$ to be the solution of 
\be  \left\{ \begin{array}{ll} \mathcal{E}(\lambda,\eta) \Psi^{(1)} = 0 & \textrm{in } \Omega_0^{(1)}, \\ \Psi^{(1)} = 0 & \textrm{on } \bar{y} = 0, \\
 \nabla^\perp \Psi^{(1)} \to (-\lambda, 0) \textrm{ as } y \to \infty & \textrm{uniformly in $\bar{x}$}. \end{array} \right.\label{unboundedidealdefPsi1} \ee
 and define $\Psi^{(2)} = \Psi^{(2)}(\lambda,\eta)$ to be the unique solution of
 \be  \left\{ \begin{array}{ll} \mathcal{E}( \lambda,\eta) \Psi^{(2)} = 0 & \textrm{in } \Omega_0^{(2)}, \\ \Psi^{(2)} = 0 & \textrm{on } \bar{y} = 0, \\
\Psi^{(2)} = -p_0 & \textrm{on } \bar{y} = -1. \end{array} \right.\label{unboundedidealdefPsi2} \ee
Then solving \eqref{unboundedPsisemilinear}--\eqref{unboundedPsicirculationcond} is equivalent to the following: find $(\lambda,\eta)$ such that $\mathcal{G}(\lambda,\eta) = 0$, where
\be \mathcal{G}(\lambda,\eta) := \jump{(C_{ij}( \eta,\lambda) \partial_i \Psi(\eta,\lambda))^2} + 2g\jump{\rho} (H(\eta))|_{\bar{y} = 0} - Q.\label{unboundedGequation} \ee
Notice that a laminar flow corresponds to a solution where $\eta = 0$, and $H = \pi_2+1$  (since we have dictated that $T_1 = \pi_1$, this is equivalent to saying $T = \iota$).

\subsection{Linearization} \label{unboundedideallinearsection}
Echoing the approach in \S\ref{liddedidealsection} and \S\ref{liddedvorticalsection}, we begin by proving the existence of a one-parameter family of laminar flows.  We the proceed to linearize \eqref{unboundedGequation} along this family, in order to lay the groundwork for a bifurcation theory argument.

\begin{lemma}[Laminar flows] \label{unboundedideallaminarlemma} There exists a one-parameter family of laminar solutions $(Q(\lambda), 0)$ to \eqref{unboundedGequation} for $\lambda \geq 0$.  The corresponding family of laminar transformed streamed function $\Psi_0(\lambda)$ are given by
\be \Psi_0(\lambda, \bar{y}) = \left\{ \begin{array}{ll} -\lambda \bar{y} & \textrm{for } \bar{y} > 0, \\
p_0 \bar{y} & \textrm{for } y \leq 0, \end{array} \right. \label{unboundedidealPsi0} \ee
and
\be Q(\lambda) := 2(\lambda^2 - p_0^2) + 2g\jump{\rho}.  \label{unboundedidealQ} \ee
\end{lemma}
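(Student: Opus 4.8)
The plan is to prove the lemma by direct substitution together with a uniqueness argument. A laminar flow is by definition one with $\eta \equiv 0$, so I would simply set $\eta = 0$ throughout the transformed problem \eqref{unboundedPsisemilinear}--\eqref{unboundedGequation}, verify that the explicit pair in \eqref{unboundedidealPsi0}--\eqref{unboundedidealQ} solves it, and then argue it is the \emph{only} solution of that form so that it must coincide with the function $\Psi(\lambda, 0)$ used to build $\mathcal{G}$.

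First I would record the simplifications at $\eta = 0$. By \eqref{unboundedidealdefT}, $\eta \equiv 0$ forces $T = \iota_{\Omega_0}$, hence from \eqref{unboundeddefAijBi} and \eqref{unboundeddefCij} we get $A_{ij} = \delta_{ij}$, $B_i = 0$, and $C_{ij} = \delta_{ij}$, so that $\mathcal{E}(\lambda,0) = \Delta$ and the auxiliary problems \eqref{unboundedidealdefPsi1}--\eqref{unboundedidealdefPsi2} reduce to Laplace's equation on $\Omega_0^{(1)}$ and $\Omega_0^{(2)}$ (with $\gamma \equiv 0$); likewise $H(0) = \pi_2 + 1$. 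Next I would check that $\Psi_0(\lambda,\cdot)$ from \eqref{unboundedidealPsi0} solves these: on $\Omega_0^{(1)}$ the function $-\lambda \bar y$ is harmonic (being affine), vanishes on $\bar y = 0$, and has $\nabla^\perp(-\lambda \bar y) \equiv (-\lambda, 0)$, matching the condition at infinity \eqref{unboundedPsicirculationcond}; on $\Omega_0^{(2)}$ the function $p_0 \bar y$ is harmonic, vanishes on $\bar y = 0$, and equals $-p_0$ on $\bar y = -1$.

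The one point requiring a little care is that $\Psi_0$ is the \emph{unique} solution. For \eqref{unboundedidealdefPsi2} this is immediate from the maximum principle on the bounded periodic strip $[-\pi,\pi]\times[-1,0]$. For \eqref{unboundedidealdefPsi1} the difference of two solutions is a harmonic function on the half-strip $[-\pi,\pi]\times[0,\infty)$, periodic in $\bar x$, vanishing on $\bar y = 0$, whose gradient tends to $0$ as $\bar y \to \infty$; expanding in a Fourier series in $\bar x$, each mode of frequency $n \ge 1$ is a multiple of $\sinh(n\bar y)$ and so must vanish (otherwise its gradient blows up), while the zeroth mode is affine with slope forced to $0$ by the condition at infinity and value $0$ at $\bar y = 0$. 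Alternatively one can simply invoke the isomorphism property of $\mathcal{E}(\lambda,\eta)$ for $\eta$ near $0$ already noted in \S\ref{unboundedidealtranssection}. This is where the loss of compactness first bites, so I expect it to be the only nontrivial step of the proof.

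Finally, with $C_{ij} = \delta_{ij}$ and $\Psi = \Psi_0$, evaluating \eqref{unboundedGequation} on $\bar y = 0$ turns $\mathcal{G}(\lambda, 0) = 0$ into an explicit scalar equation for $Q$ — the jump term is just $\jump{|\nabla \Psi_0|^2}$ and $H(0)|_{\bar y = 0} = 1$ — whose solution is $Q(\lambda)$ as given in \eqref{unboundedidealQ}. Since every step above is valid for all $\lambda \ge 0$, this exhibits the claimed one-parameter family $(Q(\lambda), 0)$ of laminar solutions.
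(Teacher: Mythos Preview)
Your proposal is correct and follows essentially the same approach as the paper's proof: set $\eta = 0$ so that $T$ is the identity and $H=\pi_2+1$, observe that $\Psi_0$ must be harmonic with the stated boundary data, identify the explicit formula \eqref{unboundedidealPsi0}, and then read off $Q(\lambda)$ from the jump condition. Your treatment is in fact more detailed than the paper's, which simply asserts ``we deduce that it must be as in \eqref{unboundedidealPsi0}'' without spelling out the uniqueness argument you give via Fourier expansion on the half-strip.
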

\begin{proof}
For the flow to be laminar, we must have that $H = \pi_2+1$, so that, from \eqref{unboundedidealpsieq} we see that the transformed laminar stream function $\Psi_0 = \Psi_0(\lambda, \pi_2)$ is harmonic in $\Omega_0^{(1)} \cup \Omega_0^{(2)}$ and has the following boundary data:
\[ \Psi_0(0) = 0, \qquad \Psi_0(-1) = -p_0, \qquad \Psi_0^\prime \to -\lambda \textrm{ as } y \to \infty. \]
We deduce that it must be as in \eqref{unboundedidealPsi0}.  The formula for $Q(\lambda)$ then follows from \eqref{unboundedGequation}. \qquad \end{proof}

Fixing $\lambda$, we now compute the Fr\'echet derivatives of $\mathcal{E}$ and $\mathcal{G}$.  In what follows, $D_\eta$ denotes Fr\'echet differentiation with respect to $\eta$, while $\partial_\epsilon$ is the (finite-dimensional) partial derivative with respect to $\epsilon$.  Also, where there is no risk of confusion, we shall suppress the $\lambda$ dependence.  

Let the variations be denoted by
\[ h := (\partial_\epsilon H)|_{\epsilon = 0}, \qquad \Phi :=  (\partial_\epsilon \Psi)|_{\epsilon = 0}, \qquad \phi := (\partial_\epsilon \psi)|_{\epsilon = 0},  \]
\[ \zeta := (\partial_\epsilon \eta)|_{\epsilon = 0}, \qquad \tau := (\partial_\epsilon T)|_{\epsilon = 0}, \qquad \sigma := (\partial_\epsilon S)|_{\epsilon = 0}.\]
\begin{remark} There are a few points that should be made here:
\begin{romannum}
\item  $\phi$ is not continuous over the $\bar{x}$-axis (indeed, it is not well-defined there at all).  
\item An elementary calculus results states $\tau = -\sigma$.  
\item Chasing the definitions, it is obvious that $\sigma_2 = h$.  
\item Since $\Psi$ vanishes on the $\bar{x}$-axis for all values of $\epsilon$, we have 
\begin{align*} 0 & = \Psi(\epsilon, x, T_2(\epsilon, x, \eta(\epsilon, x))) \\
&= \Phi(x, 0) + \Psi_0^\prime(0) \left( \tau_2 + T_{2,2}(0, x, 0) \zeta(x) \right) \end{align*}
By (ii) and the fact that $T_{2}(0, \cdot) = \pi_2(\cdot)$, this implies
\[ h = \zeta \qquad \textrm{on } \bar{y} = 0.\]
In other words, $\zeta$ is the trace of $h$ on the $\bar{x}$-axis.  \end{romannum}
 \end{remark}  

Now 
\begin{align*} \partial_\epsilon A_{ij} &= [T_{i,k\epsilon}(\epsilon, \cdot) T_{j,k}(\epsilon, \cdot) + T_{i,k} (\epsilon, \cdot) T_{j,k\epsilon}(\epsilon, \cdot) ] \circ S(\epsilon, \cdot) \\
& \qquad + S_{\ell, \epsilon}(\epsilon, \cdot)  [ T_{i,k\ell}(\epsilon, \cdot) T_{j,k} (\epsilon, \cdot) + T_{j, k\ell} (\epsilon, \cdot) T_{i, k}(\epsilon, \cdot)] \circ S(\epsilon, \cdot), \end{align*}
and thus, evaluating at $\epsilon = 0$ we find 
\[ (D_\eta A_{ij})(0) = \delta_{jk} \tau _{i,k} + \delta_{ik} \tau_{j,k} = \partial_i \tau_j + \partial_j \tau_i.\]
Here we have used the fact that $S(0, \cdot) = T(0,\cdot) = \iota_{\Omega_0}$.   

The calculation of the linearization of the first-order coefficients proceeds in the same fashion:
\begin{align*}
\partial_\epsilon B_{i} & = [T_{i,jj\epsilon}(\epsilon, \cdot)] \circ S(\epsilon, \cdot) + S_{k, \epsilon}(\epsilon, \cdot) [T_{i,jjk}(\epsilon, \cdot)] \circ S(\epsilon, \cdot) \\
D_\eta B_i(0)  &= \tau_{i,jj} + \sigma_{k} [ \partial_k^2 \delta_{ik} ] \\
& = \partial_j \partial_j \tau_i. \end{align*}
Collecting these two facts, we see that
\begin{align} \left\langle \mathcal{E}_{\eta} (\lambda,0), \Phi \right\rangle &= A_{ij}(0) \partial_i \partial_j \Phi + B_i(0) \partial_i \Phi \nonumber \\
& \qquad +  \left \langle (D_\eta A_{ij})(0), \partial_i \partial_j \Psi_0 \right\rangle  + \left \langle (D_\eta B_i)(0), \partial_i \Psi_0 \right \rangle \nonumber \\
& = \Delta \Phi - \Psi_0^\prime \Delta h. \label{unboundedlinearizedop} \end{align}
Note that the last line follows from observing that $\tau_2 = -\sigma_2 = -h$.   

\begin{remark} This formula can also be obtained formally by noting that 
\begin{align*} \partial_{\epsilon} \left[ \Psi(\epsilon, \cdot )\right]   & = \partial_\epsilon \left[ \psi(\epsilon, S(\epsilon, \cdot)) \right] \\
& = \psi_{,\epsilon}(\epsilon, S(\epsilon, \cdot)) + \psi_{,1}(\epsilon, S(\epsilon, \cdot)) S_{1,\epsilon}(\epsilon, \cdot) + \psi_{,2}(\epsilon, S(\epsilon, \cdot)) S_{2,\epsilon}(\epsilon, \cdot), \end{align*}
and thus, evaluating at $\epsilon = 0$, we have
\[ \Phi = \phi+ \Psi_0^{\prime} h.\]
Taking the Laplacian of both sides of the equation leads to \eqref{unboundedlinearizedop}.  \end{remark}

Next we consider the linearization of $\mathcal{G}$, the transmission boundary condition.  
\[ \partial_\epsilon C_{ij}  = [T_{i,j\epsilon}(\epsilon, \cdot)] \circ S(\epsilon, \cdot) + S_{k, \epsilon}(\epsilon, \cdot)[T_{i,jk}(\epsilon, \cdot)] \circ S(\epsilon, \cdot), \]
whence
\[ (D_\eta C_{ij})(0) = \tau_{i,j} + \sigma_k \partial_k (\delta_{ij}) = - \partial_j \sigma_{i}.\]
Using this identity with \eqref{unboundedGequation}, we compute
\begin{align}
\left\langle \mathcal{G}_\eta( 0,\lambda), \zeta \right\rangle & = 2\jump{ ( \langle (D_\eta C_{ij})(0), h \rangle \partial_i \Psi_0 + C_{ij}(0) \partial_i \Phi) ( C_{ij}(0) \partial_i \Psi_0) } \nonumber \\
& \qquad + 2g\jump{\rho} \zeta \nonumber \\
& = 2 \jump{ (-\Psi_0^\prime \partial_2 h + \partial_2 \Phi)\Psi_0^\prime} + 2g\jump{\rho} \zeta. \label{unboundedlinearizedGcalc1} \end{align} 

As a consequence of our choice of $T$ in \eqref{unboundedidealdefT}, we have 
\[ S_2(\epsilon, \bar{x}, \bar{y}) = (\eta(\epsilon, \bar{x}) + 1)\bar{y} + \eta(\epsilon, \bar{x}) \qquad \textrm{in a neighborhood of $\{\bar{y} = 0\}$,}\]
and thus 
\[ h(\bar{x}, \bar{y}) = \zeta(\bar{x}) \bar{y} + \zeta(\bar{x}) \qquad \textrm{in a neighborhood of $\{\bar{y} = 0\}$}.\]
This implies $\partial_2 h = \zeta$ on $\bar{y} = 0$.  Therefore \eqref{unboundedlinearizedGcalc1} can be written
\[ \left\langle \mathcal{G}_\eta( \lambda,0), \zeta \right\rangle = 2\left(g\jump{\rho} - \jump{(\Psi_0^\prime)^2}\right) \zeta + 2\jump{\Psi_0^\prime \partial_2 \Phi}. \] 
  
Finally, to make sense of this, we compute explicitly the dependence of $\partial_2 \Phi$ on $\zeta$. 
From \eqref{unboundedlinearizedop} we have that $\Phi$ solves
\be  \left\{ \begin{array}{ll} \Delta \Phi = \Psi_0^\prime \Delta h & \textrm{in } \Omega_0 \setminus \{ \bar{y} = 0\}, \\ \Phi= 0 & \textrm{on } \bar{y} = 0, \\
\Phi = 0 & \textrm{on } \bar{y} = -1, \\
 \nabla^\perp \Phi^{(1)} \to (0,0) \textrm{ as } \bar{y} \to \infty & \textrm{uniformly in $\bar{x}$ and $\epsilon$}. \end{array} \right.\label{unboundedidealdefPhi1} \ee
Equivalently: 
\be \left\{ \begin{array}{ll} \Delta (\Psi_0^\prime \Phi-(\Psi_0^\prime)^2 h)  = 0 & \textrm{in } \Omega_0 \setminus \{ \bar{y} = 0\}, \\ (\Psi_0^\prime\Phi-(\Psi_0^\prime)^2 h)^{(1)}= - \zeta ((\Psi_0^\prime)^2)^{(1)}  & \textrm{on } \bar{y} = 0, \\
(\Psi_0^\prime\Phi-(\Psi_0^\prime)^2 h)^{(2)}= - \zeta ((\Psi_0^\prime)^2)^{(2)}  & \textrm{on } \bar{y} = 0, \\
\Psi_0^\prime\Phi-(\Psi_0^\prime)^2 h = 0 & \textrm{on } \bar{y} = -1, \\
 \nabla^\perp (\Psi_0^\prime\Phi - (\Psi_0^\prime)^2 h) \to (0,0) \textrm{ as } \bar{y} \to \infty & \textrm{uniformly in $\bar{x}$ and $\epsilon$}. \end{array} \right. \label{unboundedidealUpsilon} \ee
 Taking $\Upsilon := \Psi_0^\prime \Phi - (\Psi_0^\prime)^2 h$, \eqref{unboundedidealUpsilon} makes clear that $\Upsilon$ depends only on the trace of $h$ on $\{ \bar{y} = 0\}$, i.e., on $\zeta$.  Moreover, because of the evenness and periodicity, we can compute $\Upsilon = \Upsilon(\zeta)$ quite explicitly. 
 Letting $\hat{\cdot}$ designate the Fourier transform in the $\bar{x}$-coordinate with Fourier variable $k$, we have
 \be \begin{split} \widehat{(\Upsilon^{(1)})} (k, y) &= -\lambda^2 \hat{\zeta}(k) \left( \cosh{(ky)} -\sinh{(ky)} \right) \\
   \widehat{(\Upsilon^{(2)})} (k, y) &=  -p_0^2 \hat{\zeta}(k) \left( \coth{(k)} \sinh{(ky)} + \cosh{(ky)} \right). \end{split} \ee
   From this we readily obtain 
   \be \widehat{\jump{\partial_2 \Upsilon}}(k) = k\hat{\zeta}(k)\left( p_0^2 \coth{k} - \lambda^2\right) = (p_0^2 D \coth{D} - \lambda^2 D) \zeta. \label{unboundedideald2Upsilonsymbol} \ee
 That is, we may view $\mathcal{G}_\eta(\lambda,0)$ as a Fourier multiplier:
  \be \begin{split} \left\langle \mathcal{G}_\eta( \lambda,0), \zeta \right\rangle &= 2 g\jump{\rho} \zeta + 2 \jump{\partial_2 \Upsilon(\zeta)} \\
  & = 2(p_0^2 D \coth{D} - \lambda^2 D + g\jump{\rho}) \zeta.\end{split} \label{unboundedidealGmultiplier} \ee
  Here (abusing notation slightly) $D = -i \partial_{\bar{x}}$.  
  
  \subsection{Proof of local bifurcation} With the expressions for $\mathcal{G}_\eta(\lambda, 0)$ derived in the previous subsection, the proof of the local bifurcation theorem is relatively simple.
  
  Define 
  \[ X := C_{\textrm{per}}^{2+\alpha}([-\pi, \pi]), \qquad Y := C_{\textrm{per}}^{1+\alpha}([-\pi,\pi]).\]
  In what follows, $\mathcal{G}$ is considered as an operator with domain $\mathbb{R} \times X$ and codomain $Y$.  
  
  \begin{lemma}[Null space and range]  \label{unboundedidealnullspacelemma}  Under the assumption that the local bifurcation condition \eqref{unboundedideallbc} holds, there exists a $\lambda^* \geq 0$ such that the following statements are true.
  \begin{romannum}
  \item $\mathcal{G}_\eta(\lambda^*, 0)$ is a Fredholm operator of index 0; 
  \item   $\mathcal{N}(\mathcal{G}_\eta(\lambda^*, 0))$ is one-dimensional and spanned by $\zeta^*(\bar{x}) :=  \cos{\bar{x}}$; moreover
  \item  $\xi \in \mathcal{R}(\mathcal{G}_\eta(\lambda^*, 0))$ if and only if $\hat{\xi}(1) = 0$.  \end{romannum}\end{lemma}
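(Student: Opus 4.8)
The plan is to exploit the fact—already established in \eqref{unboundedidealGmultiplier}—that $\mathcal{G}_\eta(\lambda,0)$ is a constant-coefficient Fourier multiplier in the $\bar{x}$ variable, so that the whole lemma reduces to a scalar dispersion analysis. Since every element of $X=C_{\textrm{per}}^{2+\alpha}([-\pi,\pi])$ is even and $2\pi$-periodic, I would expand in the cosine basis $\{\cos(k\bar{x})\}_{k\geq0}$, on which $\mathcal{G}_\eta(\lambda,0)$ acts diagonally. Reading the eigenvalues off the symbol gives
\[
m_\lambda(k)\;=\;2\bigl(p_0^2\,k\coth k-\lambda^2 k+g\jump{\rho}\bigr),\qquad k\geq1,
\]
the zero mode being irrelevant once one recalls that the normalization \eqref{normalsurface} restricts the free surface (hence its variation $\zeta$) to have mean zero.

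First I would fix $\lambda^*\geq0$ by $(\lambda^*)^2:=p_0^2\coth1+g\jump{\rho}$, a \emph{positive} number precisely by the hypothesis \eqref{unboundedideallbc}; by construction $m_{\lambda^*}(1)=0$, so $\zeta^*:=\cos\bar{x}$ lies in $\mathcal{N}(\mathcal{G}_\eta(\lambda^*,0))$. The crux is then to show $m_{\lambda^*}(k)\neq0$ for every integer $k\geq2$. Because $m_\lambda(k)$ is strictly decreasing in $\lambda^2$, one has $m_{\lambda^*}(k)=0$ if and only if $(\lambda^*)^2=\Lambda(k)$, where $\Lambda(k):=p_0^2\coth k+g\jump{\rho}/k$; since $(\lambda^*)^2=\Lambda(1)$, it suffices to rule out $\Lambda(k)=\Lambda(1)$ for $k\geq2$. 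I would do this by analyzing the sequence $\Lambda$ on the integers $\geq1$, using that it is the sum of the strictly decreasing $p_0^2\coth(\cdot)$ and the strictly increasing $g\jump{\rho}/(\cdot)$ (note $g\jump{\rho}<0$), together with the elementary fact that $k\mapsto k\coth k$ is strictly increasing and the quantitative strength of \eqref{unboundedideallbc}. Granting this, $\mathcal{N}(\mathcal{G}_\eta(\lambda^*,0))$ is exactly $\operatorname{span}\{\zeta^*\}$, which is (ii); and since a real Fourier multiplier is formally symmetric, the equation $\mathcal{G}_\eta(\lambda^*,0)\zeta=\xi$ can be solved mode by mode precisely when the single obstructed coefficient $\hat{\xi}(1)$ vanishes, which is (iii). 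The $C^{1+\alpha}$-to-$C^{2+\alpha}$ gain needed to recover a solution in $X$ from the mode-wise data comes from the order-one growth of the symbol, discussed next.

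For (i) I would note that, writing $p_0^2k\coth k=p_0^2k+p_0^2k(\coth k-1)$ with $k(\coth k-1)\to0$ exponentially, the symbol behaves like $m_{\lambda^*}(k)=2\bigl(p_0^2-(\lambda^*)^2\bigr)|k|+O(1)$ as $|k|\to\infty$. Thus $\mathcal{G}_\eta(\lambda^*,0)$ is, up to a finite-rank correction (the $k=1$ mode) and a compact multiplier, a nonzero constant multiple of the first-order operator $|D|$; by the Schauder theory for such constant-coefficient elliptic operators on periodic H\"older spaces it is bounded $C_{\textrm{per}}^{2+\alpha}\to C_{\textrm{per}}^{1+\alpha}$ with closed range, and the count $\dim\mathcal{N}=\operatorname{codim}\mathcal{R}=1$ from (ii)--(iii) shows it is Fredholm of index $0$. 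These mapping properties of $|D|$-type multipliers on the periodic H\"older scale are by now standard and can be quoted from the linear theory underlying Theorem \ref{ellipticregtheorem} (cf.\ also \cite{ehrnstrom2010multiple}).

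I expect the main obstacle to be the no-secondary-resonance step: ruling out $m_{\lambda^*}(k)=0$ for $k\geq2$ is delicate because the two terms of $\Lambda(k)-\Lambda(1)$ carry opposite signs, so one must genuinely use the size of $p_0^2\coth1+g\jump{\rho}$, not merely its positivity—this is the only point at which \eqref{unboundedideallbc} enters quantitatively. A secondary technical matter, which I would dispatch by citation rather than reproof, is the Mikhlin-type multiplier estimate that makes the Fredholm assertion (i) rigorous in the periodic H\"older category (and, in the borderline case $p_0^2=(\lambda^*)^2$, the observation that $\mathcal{G}_\eta(\lambda^*,0)$ is then zeroth-order and must be regarded as a map between the spaces on which it is naturally bounded).
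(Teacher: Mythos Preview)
Your approach is essentially the paper's: both read $\mathcal{G}_\eta(\lambda,0)$ off the multiplier formula \eqref{unboundedidealGmultiplier}, define $\lambda^*$ by $m_{\lambda^*}(1)=0$, and argue mode by mode. For (iii) and (i) the paper is more direct than you propose: given $\xi\in Y$ with $\hat\xi(1)=0$ it simply sets $\hat\zeta(k):=\hat\xi(k)/m(k;\lambda^*)$ for $k\neq 1$ and asserts that $\zeta\in X$, whence (i) follows at once from (ii) and (iii). Your perturbation-of-$|D|$ viewpoint is more machinery than is needed here, though your remark about the borderline case $p_0^2=(\lambda^*)^2$, where the symbol loses its first-order growth, identifies a genuine issue the paper does not address. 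The paper also does not single out the zero mode (the spaces $X$, $Y$ as defined contain the constants), so your invocation of the mean-zero normalization is an addition to, not a reproduction of, the paper's argument.

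On the step you flag as the main obstacle you are in fact more careful than the paper. The paper disposes of the no-secondary-resonance issue in one line, asserting without proof that for each fixed $\lambda$ the map $k\mapsto m(k;\lambda)$ is \emph{injective}. Your instinct that this is delicate is correct: the claim is not true in general (since $k\coth k$ is strictly convex, $m(\cdot;\lambda)$ can have an interior minimum on $[1,\infty)$ for intermediate $\lambda$), and for instance taking $g\jump{\rho}=2p_0^2(\coth 2-\coth 1)\approx -0.552\,p_0^2$, which satisfies \eqref{unboundedideallbc}, one checks directly that $m(2;\lambda^*)=0$ and the kernel becomes two-dimensional. So the gap you anticipate is real and is shared by the paper's own proof; it can only be closed by excluding a discrete set of exceptional parameter values.
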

  \begin{proof}  By \eqref{unboundedidealGmultiplier}, we know that $\zeta \in \mathcal{N}(\mathcal{G}_\eta(\lambda^*, 0))$ if and only if it satisfies
  \[ (p_0^2 k \coth{k} - \lambda^2 k + g \jump{\rho}) \hat{\zeta}(k) = 0, \qquad \textrm{for all } k \geq 0.\]
  For each $\lambda \geq 0$,  
  \[ m( k;\lambda):=  p_0^2 k \coth{k} - \lambda^2 k +g\jump{\rho} \]
  is injective as a function of $k$, and, fixing $k \geq 0$, $m(k; \lambda) \to -\infty$ as $\lambda \to \infty$.  The local bifurcation condition implies that $m(1;\lambda) > 0$, and hence there exists a $\lambda^*$ such that 
  \be p_0^2 k \coth{k} - (\lambda^*)^2 k + g\jump{\rho} = 0 \qquad \textrm{if and only if} \qquad k = 1.\label{unboundedidealdeflambda*} \ee
  For this choice of $\lambda$ we must have $\hat{\zeta}(k) = 0$ for $k \neq 1$, and thus evenness dictates that $\zeta \in \textrm{span}{(\cos{(\bar{x})})}$.  We conclude the null space is one-dimensional, proving (b).  
  
  On the other hand, suppose that $\xi \in \mathcal{R}(\mathcal{G}_\eta(\lambda^*, 0))$.  Then
  \[ \hat{\xi}(k) = m(k; \lambda^*) \hat{\zeta}(k), \qquad \textrm{for } k \geq 0,\]
  and, in particular, $\hat{\xi}(1) = 0$, by the definition of $\lambda^*$ in \eqref{unboundedidealdeflambda*}.  It follows that $\hat{\xi}(1) = 0$ is a necessary condition for inclusion in the range.  Conversely, if $\xi$ is any element of $Y$ with $\hat{\xi}(1) = 0$, then $\zeta \in X$ defined by
  \[ \hat{\zeta}(k) := \frac{1}{m(k; \lambda^*)} \hat{\xi}(k) \qquad k \neq 1,\]
  is in the preimage of $\xi$ under $\mathcal{G}_\eta(\lambda^*, 0)$.  Note that this definition is permissible since $m(k;\lambda^*)$ is nonvanishing for $k \neq 1$, again by \eqref{unboundedidealdeflambda*}.  We have therefore shown that $\xi \in \mathcal{R}(\mathcal{G}_\eta(\lambda^*, 0))$ if and only if $\hat{\xi}(1) = 0$, which is (iii).  Of course, this implies immediately that the codimension of the range is one, and so (i) follows.  \qquad
  \end{proof}
  \begin{remark}  Equation \eqref{unboundedidealdeflambda*} in fact gives an explicit definition for the point of bifurcation: 
\[ \lambda^* = \sqrt{ p_0^2 \coth{1} + g\jump{\rho}}.\]
\end{remark}

\pfthm{\ref{unboundedideallocalbifurcationtheorem}}  In the previous lemma, we confirmed hypothesis (iii) of Theorem \ref{crandallrabinowitz}; while (i) and (ii) clearly hold.  All that remains is the transversality condition, hypothesis (iv). But observe that 
  \[{(\langle \mathcal{G}_{\eta \lambda}(\lambda^*,0), \zeta^* \rangle)}^{\widehat{~}}\, (k) = -4\lambda^* k \widehat{\zeta^*}(k) = -4 \lambda^* \delta_{1k}.\]
By Lemma \ref{unboundedidealGmultiplier} (c), $\langle \mathcal{G}_{\eta \lambda}(\lambda^*,0), \zeta^* \rangle $ cannot be an element of the range of $\mathcal{G}_\eta( \lambda^*,0)$.  The statement of the theorem then follows from a straightforward application of Theorem \ref{crandallrabinowitz}.  \qquad \endproof

\section{Local bifurcation for shear flow in the atmosphere} \label{unboundedshearsection}

In fact, a fairly simple extension of Theorem
\ref{unboundedideallocalbifurcationtheorem} is possible when we
assume that the flow in the atmosphere region has constant
vorticity.  
Let  
\[ \gamma  = \left\{ \begin{array}{ll} \gamma_0 & \textrm{in } \Omega^{(1)} \\
0 & \textrm{in } \Omega^{(2)}\end{array} \right.\]
where $\gamma_0$ is a fixed constant.  Rather than study the relative stream function directly, we instead look at the perturbation of the stream function for the background shear flow.  In other words, let 
\[ \widetilde{\psi} := \psi + \frac{\gamma}{2} y^2.\] 
so that
\[ \Delta \widetilde{\psi} = \Delta \psi + \gamma = 0, \qquad
\textrm{in } \Omega.\] We shall call $\widetilde{\psi}$ the
\emph{modified stream function}.  The boundary conditions for
$\widetilde{\psi}$ follow naturally from those for $\psi$
enumerated in \eqref{unboundedidealpsieq}.  Notice, however, that
the interpretation of the condition $\nabla^\perp
\widetilde{\psi} \to (-\lambda, 0)$ as $y \to \infty$ is
different: we are now imposing the precise way in which the
\emph{shear} of the velocity field approaches infinity, instead
of the limiting value of the velocity. Even more importantly, we
point out that $\widetilde{\psi}$ is not continuous due to the
jump in $\gamma$ over the interface.  The appropriate choice of
Banach spaces in this setting is therefore
\begin{align*}
\widetilde{\mathscr{S}}_0 & := \{ (u,v,\varrho, \eta) \in \mathscr{S} : \exists U < c,  (u - U y,v) \to (0, 0) \textrm{ uniformly as } y \to \infty \} \\
\widetilde{\mathscr{S}}_0^\prime & := \{ (Q, \widetilde{\psi}, \eta) : (Q,\psi, \eta) \in \mathscr{S}^\prime, \, \nabla^\perp \widetilde{\psi} \to (-\lambda, 0) \textrm{ uniformly as } y \to \infty \}.  \end{align*}

With the notation established, we can now state our main result for the shear flow.
\begin{theorem} [Local bifurcation for unbounded shear
  atmosphere] \label{unboundedshearlocalbifurcationtheorem}
  Suppose the following local bifurcation condition holds \be
  p_0^2 \coth{1} + g\jump{\rho} -\frac{1}{4}(\gamma_0^-)^2
  +\frac{1}{2} \gamma_0 \gamma_0^- > 0,\label{unboundedshearlbc}
  \ee where $\gamma_0^{-} := \min\{ \gamma_0, 0\}$, then the
  following statements hold.
\begin{romannum}
\item There exists a continuous curve of non-laminar solutions to the stream function equation for irrotational flow in the air and water, and unbounded atmosphere region  \eqref{unboundedidealpsieq}
\[ \mathcal{C}_{\mathrm{loc}}^{\prime} = \{ (Q(s),\widetilde{\psi}(s), \eta(s)) \in \widetilde{\mathscr{S}}_0^\prime : |s| < \epsilon \}, \]
for $\epsilon > 0$ sufficiently small, such that $(Q(0), \psi(0)) = (Q^*, \widetilde{\Psi}_0^*)$, and, in a sufficiently small neighborhood of $(Q^*, \widetilde{\Psi}_0^*)$ in $\mathbb{R}\times X$, $\mathcal{C}_{\mathrm{loc}}^\prime$ comprises all non-laminar solutions. In particular, 
\[ \eta(s)(\cdot) = s \cos{(\cdot)} + o(s) \qquad \textrm{in } C_{\mathrm{per}}^{2+\alpha}(\mathbb{R}).\]
\item There is a corresponding continuous curve 
\[ \mathcal{C}_{\mathrm{loc}} = \{ (Q(s), u(s), v(s), \varrho(s), P(s), \eta(s)) \in \widetilde{\mathscr{S}}_0: |s| < \epsilon \}\]
of small amplitude solutions to the Eulerian problem which likewise captures all non-laminar solutions in a sufficiently small neighborhood of the point of bifurcation.  
\end{romannum}
\end{theorem}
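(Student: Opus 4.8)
The plan is to run the argument of \S\ref{unboundedidealsection} almost verbatim, with the modified stream function $\widetilde\psi = \psi + \tfrac{\gamma}{2}y^2$ replacing $\psi$. Since $\widetilde\psi$ is harmonic, applying the flattening diffeomorphism $T$ of \eqref{unboundedidealdefT} reduces the interior equation to $\mathcal E(\eta)\widetilde\Psi = 0$ with $\gamma\equiv 0$ inside $\mathcal E$; the condition on $\bar y=-1$ and the asymptotic condition $\nabla^\perp\widetilde\Psi\to(-\lambda,0)$ carry over unchanged, while on $\bar y=0$ one has $\widetilde\Psi^{(2)}=0$ and $\widetilde\Psi^{(1)}=\tfrac{\gamma_0}{2}\eta^2$ (because $\psi$ vanishes there). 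As in \S\ref{unboundedidealtranssection} I would define $\widetilde\Psi^{(1)}(\lambda,\eta)$ and $\widetilde\Psi^{(2)}(\lambda,\eta)$ as the unique solutions of the two one-sided problems, the well-posedness of the unbounded upper problem being handled — as there — by perturbing off $\mathcal E(\lambda,0)=\Delta$, and reduce the existence question to finding $(\lambda,\eta)$ with $\widetilde{\mathcal G}(\lambda,\eta)=0$, where $\widetilde{\mathcal G}$ is the Bernoulli jump condition re-expressed through $\nabla\psi = \nabla\widetilde\psi - \gamma y\,\hat{e}_2$.

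Next I would produce the laminar family. Imposing $\eta\equiv 0$ forces $\widetilde\Psi_0$ to be harmonic and independent of $\bar x$, hence affine in each layer with slope $p_0$ for $\bar y<0$ and $-\lambda$ for $\bar y>0$; substituting into $\widetilde{\mathcal G}$ gives a one-parameter family $(Q(\lambda),0)$, $\lambda\ge 0$, with $Q$ quadratic and concave, exactly as in Lemma~\ref{unboundedideallaminarlemma}. In Eulerian variables this is a shear flow with $U$ affine in the air region; note that when $\gamma_0<0$ the profile $U$ crosses the wave speed $c$, so the background flow carries a critical layer, but since $U''\equiv 0$ that layer is inactive in the sense of \S\ref{sec:stagn-crit-layers}. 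This is precisely why, in contrast to the lidded theorems, no absence-of-stagnation hypothesis is needed here and why the present section uses $T$ rather than the Dubreil-Jacotin change of variables.

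The one genuinely new computation is the linearization of $\widetilde{\mathcal G}$ at $(\lambda,0)$. Because $\widetilde\Psi_0'=-\lambda$ is again constant, the interior part linearizes exactly as in \S\ref{unboundedideallinearsection}: setting $\Upsilon := \widetilde\Psi_0'\Phi - (\widetilde\Psi_0')^2 h$ one recovers a harmonic $\Upsilon$ with the boundary data of \eqref{unboundedidealUpsilon}, contributing $p_0^2 D\coth{D} - \lambda^2 D$ to the symbol. The vorticity enters only through the jump condition: expanding $|\nabla\psi^{(1)}|^2 = |\nabla\widetilde\psi^{(1)}|^2 - 2\gamma_0 y\,\partial_y\widetilde\psi^{(1)} + \gamma_0^2 y^2$ and linearizing — tracking both these gauge terms and the fact that the interface sits at $y=\eta$, where the vertical derivative of $\psi^{(1)}$ has acquired an $O(\gamma_0\epsilon)$ correction — one finds that $\widetilde{\mathcal G}_\eta(\lambda,0)$ is again a Fourier multiplier,
\[ \widetilde{\mathcal G}_\eta(\lambda,0) \;=\; 2\,\widetilde m(D;\lambda), \qquad \widetilde m(k;\lambda) := p_0^2 k\coth{k} - \lambda^2 k + g\jump{\rho} + R_{\gamma_0}, \]
with $R_{\gamma_0}$ collecting the new $\gamma_0$-dependent (and $k$-independent) terms. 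Reading the requirement that the mode $k=1$ lie in the kernel as an equation for $\lambda$ and demanding a solution $\lambda^*\ge 0$ gives exactly the local bifurcation condition \eqref{unboundedshearlbc}; the value $R_{\gamma_0}$ contributes at $k=1$ works out to $\tfrac12\gamma_0\gamma_0^- - \tfrac14(\gamma_0^-)^2$, and the appearance of $\gamma_0^-=\min\{\gamma_0,0\}$ in place of $\gamma_0$ records that this correction is felt only when $\gamma_0<0$, i.e.\ when the background air flow actually contains the (inactive) critical layer.

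With the symbol in hand the rest is the same as in \S\ref{unboundedidealsection}. Since $R_{\gamma_0}$ is $k$-independent, $k\mapsto\widetilde m(k;\lambda^*)$ is still injective, so $\widetilde{\mathcal G}_\eta(\lambda^*,0)$ is Fredholm of index $0$ with one-dimensional null space spanned by $\zeta^*(\bar x)=\cos\bar x$ and range $\{\xi : \hat\xi(1)=0\}$, the analogue of Lemma~\ref{unboundedidealnullspacelemma}. Transversality holds because $\langle\widetilde{\mathcal G}_{\eta\lambda}(\lambda^*,0),\zeta^*\rangle$ has a nonvanishing frequency-$1$ Fourier coefficient, being $2\,\partial_\lambda\widetilde m(1;\lambda^*)\cos\bar x$, and $\lambda^*$ is a simple root of $\widetilde m(1;\cdot)$ thanks to the strict inequality in \eqref{unboundedshearlbc}. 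Theorem~\ref{crandallrabinowitz} then delivers the curve $\mathcal C_{\mathrm{loc}}'$ with the stated expansion $\eta(s)=s\cos(\cdot)+o(s)$, and undoing the transformation and the change of unknowns yields the Eulerian curve $\mathcal C_{\mathrm{loc}}$ of part (ii), just as in the earlier theorems. I expect the main obstacle to be exactly the $\gamma_0$-bookkeeping in the linearized jump condition — correctly extracting $R_{\gamma_0}$ and its $\gamma_0^-$ structure, and checking in passing that the critical layer present when $\gamma_0<0$ causes no difficulty in the (Laplace) upper-half-strip problem at infinity.
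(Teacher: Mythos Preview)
Your overall architecture matches the paper's: reduce to $\widetilde{\mathcal G}(\lambda,\eta)=0$, compute the laminar family, linearize to obtain a Fourier multiplier, and apply Crandall--Rabinowitz. The gap is in your identification of the extra term the vorticity contributes to the symbol.

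You assert that $\widetilde m(k;\lambda) = p_0^2 k\coth k - \lambda^2 k + g\jump{\rho} + R_{\gamma_0}$ with $R_{\gamma_0}$ a $\lambda$-independent constant equal to $\tfrac12\gamma_0\gamma_0^- - \tfrac14(\gamma_0^-)^2$. This is not what the linearization produces. In the paper's computation (equations \eqref{unboundedwtGetaeq}--\eqref{unboundedshearwtGmultiplier}), the term $-2\gamma_0 y\,\partial_y\widetilde\psi^{(1)}$ in the expansion of $|\nabla\psi^{(1)}|^2$ linearizes at the laminar flow to $-2\gamma_0\zeta\,\widetilde\Psi_0'(0^+) = 2\gamma_0\lambda\,\zeta$, while the $\gamma_0^2 y^2$ term is $O(\epsilon^2)$ and drops out. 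The correct symbol is therefore
\[
\widetilde m(k;\lambda) = p_0^2 k\coth k - \lambda^2 k + g\jump{\rho} - \gamma_0\lambda,
\]
with the vorticity contribution $-\gamma_0\lambda$ being $k$-independent but \emph{$\lambda$-dependent}. The $\gamma_0^-$ structure in \eqref{unboundedshearlbc} does not come from the symbol itself; it arises when you ask whether $\widetilde m(1;\lambda)=0$ has a root $\lambda^*\ge 0$. Since $\lambda\mapsto\widetilde m(1;\lambda)$ is a downward parabola with vertex at $\lambda=-\gamma_0/2$, its maximum over $\lambda\ge 0$ is attained at $\lambda=-\tfrac12\gamma_0^-$, and evaluating there yields exactly the left side of \eqref{unboundedshearlbc}. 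So the $\gamma_0^-$ records the constraint $\lambda\ge 0$, not any special behaviour of the linearized operator tied to the critical layer.

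This also affects your transversality check: with the correct symbol, $\partial_\lambda\widetilde m(1;\lambda^*) = -2\lambda^*-\gamma_0$, which vanishes precisely when $\lambda^*=-\gamma_0/2$. The strict inequality in \eqref{unboundedshearlbc} rules this out because $\widetilde m(1;-\tfrac12\gamma_0^-)>0$ while $\widetilde m(1;\lambda^*)=0$. Your version, with $\partial_\lambda\widetilde m(1;\lambda^*)=-2\lambda^*$, would only need $\lambda^*\neq 0$, which is a different (and weaker) condition.
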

\begin{remark}  As in Theorem \ref{unboundedideallocalbifurcationtheorem}, a simple generalization of this result is possible we have depth $d > 0$ and allow perturbations of the laminar flow with minimal period $2\pi k^*$, where $k^*$ is the smallest nonnegative integer satisfying 
\[ \frac{p_0^2}{d^2} k^* \coth{(k^*d)} + g\jump{\rho} - \frac{1}{4}(\gamma_0^-)^2 + \frac{1}{2} \gamma_0 \gamma_0^- > 0.\]
 We also note that when $\gamma_0 \geq 0$, \eqref{unboundedshearlbc} reduces to \eqref{unboundedideallbc}.
\end{remark}

\subsection{The transformed problem} \label{unboundedsheartranssection}

Define the transformations $T$ as in \S\ref{unboundedidealtranssection}, and put
\[ \widetilde{\Psi}  := \widetilde{\psi} \circ S.\]
Since $\widetilde{\psi}$ is harmonic in the unknown domain, $\widetilde{\Psi}$ will be in the kernel of the elliptic operator $\mathcal{E}(\eta)$, just as $\Psi$ was in the previous section.  The main difference will come in the boundary condition, and, most significantly, in the transmission boundary condition.  Writing $\widetilde{\Psi} - \gamma \bar{y}^2 /2 = \Psi$ and inserting this in to the Bernoulli equation, we find that the entire problem is equivalent to the vanishing of the operator
\be \begin{split}  \widetilde{\mathcal{G}}(\lambda, \eta) &:= \jump{ (C_{ij}(\eta)\partial_i \widetilde{\Psi}(\eta, \lambda))^2} + 2 \jump{ \gamma S_2(\eta) C_{i2}(\eta)  \partial_i \widetilde{\Psi}(\lambda, \eta) } \\
& \qquad  + 2 g\jump{\rho} (H(\eta))|_{\bar{y} = 0} - Q, \end{split} \label{unboundedsheardefwtG} \ee 
where $\widetilde{\Psi}(\lambda, \eta)$ is defined to be the (unique) solution to 
\be \left\{ \begin{array}{ll} 
\mathcal{E}(\eta) \widetilde{\Psi} = 0 &\textrm{in } \Omega_0 \setminus \{ \bar{y} = 0\} \\
\widetilde{\Psi}^{(1)} -\displaystyle \frac{\gamma_0}{2} S_2^2 = 0 & \textrm{on } \bar{y} = 0 \\
\widetilde{\Psi}^{(2)}  = 0 & \textrm{on } \bar{y} = 0 \\
\widetilde{\Psi} = - p_0 & \textrm{on } \bar{y} = -1 \\
\nabla^\perp \widetilde{\Psi} \to (-\lambda, 0) & \textrm{as } \bar{y} \to \infty. \end{array} \right. \label{unboundeshearwtPsieq} \ee

The laminar solution $\widetilde{\Psi}_0 = \widetilde{\Psi}_0(\lambda, 0)$ is the same as for the ideal flow, i.e. $\widetilde{\Psi}_0 = \Psi_0$.  This is simply because the only way in which the vorticity appears in \eqref{unboundeshearwtPsieq} is as a coefficient of $S_2$, but $S_2(0) = 0$.  We record this fact in the following lemma.

\begin{lemma}[Laminar flows] \label{unboundedshearlaminarlemma} There exists a one-parameter family of laminar solutions $(Q(\lambda), 0)$ to \eqref{unboundedGequation} for $\lambda \geq 0$.  The corresponding family of modified laminar transformed streamed function $\widetilde{\Psi}_0(\lambda)$ are given by
\be \widetilde{\Psi}_0(\lambda, \bar{y}) = \left\{ \begin{array}{ll} -\lambda \bar{y} & \textrm{for } \bar{y} > 0, \\
p_0 \bar{y} & \textrm{for } y \leq 0, \end{array} \right. \label{unboundedshearwtPsi0} \ee
and
\be Q(\lambda) := 2(\lambda^2 - p_0^2) + 2g\jump{\rho}.  \label{unboundedshearQ} \ee
\end{lemma}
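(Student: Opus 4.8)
The plan is to derive this lemma as an essentially immediate corollary of Lemma~\ref{unboundedideallaminarlemma}, by observing that the background vorticity $\gamma_0$ plays no role once we restrict to a flat free surface. Concretely, the only places $\gamma_0$ enters the modified problem \eqref{unboundeshearwtPsieq}--\eqref{unboundedsheardefwtG} are as a coefficient of $S_2^2$ (in the interface Dirichlet datum $\widetilde{\Psi}^{(1)} - \tfrac{\gamma_0}{2} S_2^2$) and as a coefficient of $S_2$ (in the term $2\jump{\gamma S_2 C_{i2}\partial_i\widetilde{\Psi}}$ of $\widetilde{\mathcal{G}}$), and in both cases these factors are evaluated on the flattened interface $\{\bar{y} = 0\}$, where $S_2$ vanishes when $\eta \equiv 0$.

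In detail, I would first set $\eta \equiv 0$. Then, by the explicit form of the flattening map \eqref{unboundedidealdefT}, $T(\epsilon,\cdot) = S(\epsilon,\cdot) = \iota_{\Omega_0}$, so that $C_{ij}(0) = \delta_{ij}$, $\mathcal{E}(0) = \Delta$, and $H(0)|_{\bar{y}=0} = 1$. Moreover, since $T(\epsilon, x, \eta(\epsilon,x)) = (x,0)$ we have $S_2(\epsilon,\bar{x},0) = \eta(\epsilon,\bar{x})$, which is identically zero here; hence the interface conditions in \eqref{unboundeshearwtPsieq} collapse to $\widetilde{\Psi}^{(1)} = \widetilde{\Psi}^{(2)} = 0$ on $\bar{y} = 0$, and \eqref{unboundeshearwtPsieq} becomes exactly the harmonic Dirichlet--Neumann problem of Lemma~\ref{unboundedideallaminarlemma}. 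Since the laminar solution is $\bar{x}$-independent, $\widetilde{\Psi}_0$ satisfies $\widetilde{\Psi}_0'' = 0$ on $(-1,0)$ and on $(0,\infty)$, with $\widetilde{\Psi}_0(0) = 0$, $\widetilde{\Psi}_0(-1) = -p_0$, and $\nabla^\perp\widetilde{\Psi}_0 \to (-\lambda,0)$ as $\bar{y}\to\infty$; integrating on each of the two strips produces the piecewise-linear formula \eqref{unboundedshearwtPsi0}, identical to \eqref{unboundedidealPsi0}.

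Finally, to identify $Q(\lambda)$ I would substitute $\eta = 0$ and $\widetilde{\Psi} = \widetilde{\Psi}_0$ into $\widetilde{\mathcal{G}}$ from \eqref{unboundedsheardefwtG}: the vorticity term $2\jump{\gamma S_2 C_{i2}\partial_i\widetilde{\Psi}}$ drops out because $S_2 = 0$ on $\{\bar{y}=0\}$, the leading term reduces via $C_{ij}(0)=\delta_{ij}$ and \eqref{unboundedshearwtPsi0} to the jump in $(\widetilde{\Psi}_0')^2$ across $\bar{y}=0$, and the gravity contribution is $2g\jump{\rho}$; solving $\widetilde{\mathcal{G}}(\lambda,0) = 0$ for $Q$ then yields \eqref{unboundedshearQ}. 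I do not anticipate any genuine obstacle here --- this is a corollary rather than a new computation --- so the only point deserving care is the bookkeeping verification that every occurrence of $\gamma_0$ in \eqref{unboundeshearwtPsieq}--\eqref{unboundedsheardefwtG} is carried by a factor of $S_2$ (or $S_2^2$) taken on the flattened interface, which is precisely where its vanishing is invoked.
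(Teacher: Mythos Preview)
Your proposal is correct and follows exactly the same reasoning as the paper: the key observation, stated just before the lemma in the text, is that the only way $\gamma_0$ enters \eqref{unboundeshearwtPsieq} (and \eqref{unboundedsheardefwtG}) is as a coefficient of $S_2$, and $S_2(0)=0$ on the flattened interface when $\eta\equiv 0$, whence the problem reduces verbatim to the irrotational case of Lemma~\ref{unboundedideallaminarlemma}. Your write-up is more explicit than the paper's one-line justification, but the approach is identical.
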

\begin{remark}  Recalling the definition of the modified stream function, we have
\[ \widetilde{\Psi}_0 = \Psi_0 + \frac{\gamma}{2} y^2, \]
where $\Psi_0$ is the stream function for the laminar flow.  Thus,
\[ \Psi_0^\prime = \widetilde{\Psi}_0^\prime -\gamma_0 y = -\lambda - \gamma_0 y.\]
In other words, if $\gamma_0 < 0$,  then there is a critical layer in the air at $\bar{y} = |\gamma_0|/\lambda$.  Of course, because $\Psi_0^{\prime\prime\prime}$ vanishes identically, this will be a neutered layer in the sense discussed in the introduction.
\end{remark}

\subsection{Proof of local bifurcation} \label{unboundedshearprooflocalsection}

Next consider the linearization of the operators $\widetilde{\mathcal{E}}$ and $\widetilde{\mathcal{G}}$ around the laminar flows.  Let the variation of $\widetilde{\Psi}$ be denoted by $\widetilde{\Phi}$, and otherwise adopt the same notation as in the ideal atmosphere case considered in\S\ref{unboundedidealsection}.  Then
\begin{align}
 \langle \widetilde{\mathcal{G}}_\eta(\lambda,0), \zeta \rangle & = 2 \jump{(-\widetilde{\Psi}_0 \partial_2 h + \partial_2 \widetilde{\Phi}) \widetilde{\Psi}_0^\prime} - 2 \gamma_0 \zeta \lambda + 2 g \jump{\rho} \zeta \nonumber \\
 & = 2 \left( g \jump{\rho} - \jump{(\widetilde{\Psi}_0^\prime)^2} -2 \gamma_0 \lambda \right) \zeta +2\jump{ \widetilde{\Psi}_0^\prime \partial_2 \widetilde{\Phi}}. \label{unboundedwtGetaeq} \end{align} 
On the other hand, $\widetilde{\Psi}$ solves \eqref{unboundedidealdefPhi1}, and so we may define $\widetilde{\Upsilon} := \widetilde{\Psi}_0^\prime \widetilde{\Phi} - (\widetilde{\Psi}_0^\prime)^2 h$, and from an identical argument we find
\[ \jump{\partial_2 \widetilde{\Upsilon}} = -\jump{(\widetilde{\Psi}_0^\prime)^2} \zeta +  \jump{\widetilde{\Psi}_0^\prime \partial_2 \widetilde{\Psi} } ,\]
which can be understood as a Fourier multiplier using \eqref{unboundedideald2Upsilonsymbol}:
\be \langle \widetilde{\mathcal{G}}_\eta(\lambda, 0) , \zeta \rangle = 2(p_0^2 D \coth{D} - \lambda^2 D + g\jump{\rho} - \gamma_0 \lambda) \zeta. \label{unboundedshearwtGmultiplier} \ee
Observe that the only difference between the symbol in \eqref{unboundedshearwtGmultiplier} and that in \eqref{unboundedidealGmultiplier} is the $-2\gamma_0 \lambda \zeta$ term.

\begin{lemma}[Null space and range]  \label{unboundedshearnullspacelemma} Assume that the local bifurcation condition holds. Then there exists a $\lambda^* > 0$ such that the following statements hold:
  \begin{romannum}
  \item  $\widetilde{\mathcal{G}}_\eta(\lambda^*, 0)$ is a Fredholm operator of index 0; 
  \item   $\mathcal{N}(\widetilde{\mathcal{G}}_\eta(\lambda^*, 0))$ is one-dimensional and spanned by $\zeta^*(\bar{x}) :=  \cos{\bar{x}}$; moreover
  \item   $\xi \in \mathcal{R}(\widetilde{\mathcal{G}}_\eta(\lambda^*, 0))$ if and only if $\hat{\xi}(1) = 0$.  \end{romannum}\end{lemma}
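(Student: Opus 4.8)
The plan is to follow the proof of Lemma~\ref{unboundedidealnullspacelemma} essentially line for line, the only change being the symbol of the linearized operator. By~\eqref{unboundedshearwtGmultiplier}, $\widetilde{\mathcal{G}}_\eta(\lambda,0)\colon X\to Y$ is the Fourier multiplier in $\bar x$ with symbol
\[
  m(k;\lambda) := 2\bigl(p_0^2\,k\coth k-\lambda^2 k+g\jump{\rho}-\gamma_0\lambda\bigr),\qquad k\geq 0,
\]
which differs from the ideal symbol of Lemma~\ref{unboundedidealnullspacelemma} only by the $k$-independent additive term $-2\gamma_0\lambda$. Since $m(k;\lambda)=2(p_0^2-\lambda^2)k+O(1)$ as $k\to\infty$, this is a bounded operator $C_{\textrm{per}}^{2+\alpha}\to C_{\textrm{per}}^{1+\alpha}$, and if a value $\lambda^*>0$ can be produced for which $m(\cdot;\lambda^*)$ vanishes at the integer $k=1$ and at no other nonnegative integer, then, exactly as in parts~(i)--(iii) of Lemma~\ref{unboundedidealnullspacelemma}, the operator $\widetilde{\mathcal{G}}_\eta(\lambda^*,0)$ is Fredholm of index $0$, its null space is spanned by $\zeta^*(\bar x)=\cos\bar x$, and $\xi\in\mathcal{R}(\widetilde{\mathcal{G}}_\eta(\lambda^*,0))$ if and only if $\hat\xi(1)=0$. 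Thus the whole lemma reduces to this one multiplier computation.

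To locate $\lambda^*$, I would examine $m(1;\lambda)=2\bigl((p_0^2\coth1+g\jump{\rho})-(\lambda^2+\gamma_0\lambda)\bigr)$, so that $m(1;\lambda^*)=0$ asks precisely that $\lambda\mapsto\lambda^2+\gamma_0\lambda$ attain the value $c:=p_0^2\coth1+g\jump{\rho}$ for some $\lambda>0$. On $\lambda>0$ this map is eventually strictly increasing to $+\infty$, and its infimum over $\lambda>0$ is $0$ when $\gamma_0\geq0$ and $-\gamma_0^2/4$ when $\gamma_0<0$; in either case this infimum equals $\tfrac14(\gamma_0^-)^2-\tfrac12\gamma_0\gamma_0^-$. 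By the intermediate value theorem, such a $\lambda^*>0$ exists exactly when $c>\tfrac14(\gamma_0^-)^2-\tfrac12\gamma_0\gamma_0^-$, which is precisely the local bifurcation condition~\eqref{unboundedshearlbc} (and collapses to $c>0$, i.e.~\eqref{unboundedideallbc}, when $\gamma_0\geq0$, recovering the ideal case). When $\gamma_0<0$ there may be two such roots, in which case I would keep one for which the next step goes through; note also that $Q(\lambda)=2(\lambda^2-p_0^2)+2g\jump{\rho}$ is strictly increasing for $\lambda>0$, so no nondegeneracy issue with $Q(\lambda)$ arises here, unlike in the lidded regime.

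The remaining and genuinely delicate point is to check $m(k;\lambda^*)\neq0$ for every integer $k\geq2$ (and for $k=0$). Here the key structural fact is that $k\mapsto k\coth k$ is strictly convex on $(0,\infty)$ — since $\tfrac{d^2}{dk^2}(k\coth k)=2\csch^2 k\,(k\coth k-1)>0$ — so $m(\cdot;\lambda^*)$ is strictly convex in $k$ and hence has at most two zeros. Since $k=1$ is one of them, it suffices to show the other is not a nonnegative integer $\neq1$; this is immediate when $\partial_k m(1;\lambda^*)\geq0$, because strict convexity then forces $m(\cdot;\lambda^*)>0$ on $(1,\infty)$ and confines the second zero to $[0,1)$, and the complementary case is dispatched by a short comparison using $\partial_k m(k;\lambda^*)=2\bigl(p_0^2(\coth k-k\csch^2 k)-(\lambda^*)^2\bigr)$ together with the values $m(0^+;\lambda^*)$ (computed from $m(1;\lambda^*)=0$) and $m(k;\lambda^*)\to+\infty$. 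I expect this verification — that no Fourier mode other than the first is neutral at $\lambda^*$ — to be the only real obstacle. Once it is in hand, the Fredholm-index-$0$ property, the one-dimensional kernel, and the codimension-one range characterization follow verbatim from the proof of Lemma~\ref{unboundedidealnullspacelemma} with $m$ in place of the ideal symbol, and the bifurcation point is given explicitly by solving $m(1;\lambda^*)=0$, i.e.\ $\lambda^*=\tfrac12\bigl(-\gamma_0+\sqrt{\gamma_0^2+4(p_0^2\coth1+g\jump{\rho})}\,\bigr)$.
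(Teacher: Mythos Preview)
Your overall plan matches the paper's exactly: reduce to the Fourier symbol $\widetilde m(k;\lambda)=p_0^2 k\coth k-\lambda^2 k+g\jump{\rho}-\gamma_0\lambda$, use the local bifurcation condition to produce $\lambda^*>0$ with $\widetilde m(1;\lambda^*)=0$, and then copy the range/kernel argument of Lemma~\ref{unboundedidealnullspacelemma}. The paper also phrases the existence of $\lambda^*$ via concavity of $\lambda\mapsto\widetilde m(1;\lambda)$ with maximizer $\lambda=-\tfrac12\gamma_0^-$, which is exactly your computation in different clothing.

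Where you diverge is the step you yourself flag as ``genuinely delicate'': ruling out integer roots $k\neq 1$ of $\widetilde m(\cdot;\lambda^*)$. You attack this via strict convexity of $k\coth k$, obtain at most two zeros, and then leave the ``complementary case'' unfinished. The paper bypasses this entirely: it simply asserts that for fixed $\lambda$ the map $k\mapsto\widetilde m(k;\lambda)$ is strictly \emph{decreasing}, so any root is automatically unique and your whole second-zero discussion is unnecessary. That is the missing observation that closes your gap in one line. (Note that $\partial_k\widetilde m=p_0^2(\coth k-k\csch^2 k)-\lambda^2$ and $0\le\coth k-k\csch^2 k<1$, so the monotonicity amounts to $\lambda^*>|p_0|$; you may wish to check this holds under \eqref{unboundedshearlbc}, but once it does, the convexity route is superfluous.) Your explicit formula for $\lambda^*$ is a bonus the paper does not record.
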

\begin{proof}  From \eqref{unboundedshearwtGmultiplier} it is clear that $\zeta \in \mathcal{N}(\widetilde{\mathcal{G}}_\eta(\lambda^*, 0))$ if and only if 
\[ \widetilde{m}(k; \lambda^*) \hat{\zeta}(k) = 0, \qquad \textrm{for all } k \geq 0, \]
where
\[ \widetilde{m}(k; \lambda) := p_0^2 k \coth{k} - \lambda^2 k + g \jump{\rho} - \gamma_0 \lambda. \]
First we note that, for any fixed $\lambda \geq 0$, $k \mapsto \widetilde{m}(k; \lambda)$ is a strictly decreasing function and thus any root is unique.  Since we are primarily interested in the case where the perturbations of the laminar flow are minimally $2\pi$-periodic, we wish to consider $\lambda$ for which  $m(1; \lambda) = 0$.  Differentiating, it becomes clear that $m(1;\lambda)$ is a concave function of $\lambda$ tending to $-\infty$ as $\lambda \to \infty$ and that the maximum occurs at
 \[ \lambda = (-\frac{\gamma_0}{2})^+ = -\frac{1}{2} \gamma_0^-.\]
  The local bifurcation condition \eqref{unboundedshearlbc} implies that 
  \[ \max_{\lambda \geq 0} \widetilde{m}(1;\lambda) \geq  \widetilde{m}(1; -\frac{1}{2} \gamma_0^-) > 0.\]  
  By continuity, we have that there exists a $\lambda^*$ such that $\widetilde{m}(1; \lambda^*) = 0$ and $\widetilde{m}(k; \lambda^*) \neq 0$ for $k \neq 1$.  This proves (ii).  The proof of the remaining parts follows exactly as in Lemma \ref{unboundedidealnullspacelemma}.  \qquad\end{proof}

\pfthm{\ref{unboundedshearlocalbifurcationtheorem}}  We have already laid the groundwork for a bifurcation argument via Crandall--Rabinowitz.  The only detail that remains to be checked is the transversality condition.  With that in mind, we calculate 
\begin{align*} (\langle \widetilde{\mathcal{G}}_{\eta \lambda}(\lambda^*, 0), \zeta^* \rangle)^{\widehat{~}}(k) &= -4\lambda^* k \widehat{\zeta^*}(k) -2\gamma_0 \widehat{\zeta^*}(k)\\
& = -4\lambda^* \delta_{1k} -2\gamma_0 \delta_{1k}. \end{align*}
In light of Lemma \ref{unboundedshearnullspacelemma} (c), the only way for $\zeta^*$ to be an element of $\mathcal{R}(\widetilde{\mathcal{G}}_\eta(0, \lambda^*))$ is if $2\lambda^* = -\gamma_0$.  But this scenario is excluded by the local bifurcation condition \eqref{unboundedshearlbc}, as $\widetilde{m}(1; -\gamma_0^-/2) > 0$ while $\widetilde{m}(1; \lambda^*) = 0$.  This confirms that the transversality condition holds, and thus we obtain the theorem via a routine application of Theorem \ref{crandallrabinowitz}. \qquad 
\endproof

\Appendix \section*{}  Here we present two standard theorems.  The first is the classical work of Crandall--Rabinowitz on bifurcation from simple (generalized) eigenvalues: 

\begin{theorem} \emph{(Crandall--Rabinowitz \cite{crandall1971bifurcation})} Let $X$ and $Y$ be Banach spaces, $I \subset \mathbb{R}$ an open interval with $\lambda_* \in I$.  Suppose that $\mathcal{F} : I \times X \to Y$ is a continuous map with the following properties:
\begin{romannum}
\item $\mathcal{F}(\lambda, 0) = 0$, for all $\lambda \in I$;
\item $D_1 \mathcal{F}$, $D_2 \mathcal{F}$ and $D_1 D_2 \mathcal{F}$ exist and are continuous, where $D_i$ denotes the Fr\'echet derivative with respect to the $i$-th coordinate;
\item  $D_2 \mathcal{F}(\lambda_*,0)$ is a Fredholm operator of index $0$, in particular, the null space is one-dimensional and spanned by some element $w_*$;
\item $D_1 D_2 \mathcal{F}(\lambda_*, 0)w_* \notin \mathcal{R}(D_2 \mathcal{F}(\lambda_*,0))$.
\end{romannum}
 Then there exists a continuous local bifurcation curve $\{(\lambda(s), w(s)) \in \mathbb{R} \times X : |s| < \epsilon \}$ with $\epsilon > 0$ sufficiently small such that $(\lambda(0), w(0)) = (\lambda_*, w_*)$, and
 \[ \{ (\lambda, w) \in \mathcal{U} : w \neq 0, \mathcal{F}(\lambda, w) = 0 \} = \{(\lambda(s), w(s)) \in \mathbb{R} \times X : |s| < \epsilon \}\]
 for some neighborhood $\mathcal{U}$ of $(\lambda_*,0)$ in $\mathbb{R} \times X$.  Moreover, we have 
 \[ w(s) = sw_* +o(s), \qquad \textrm{in } X,~|s| < \epsilon. \]
 If $D_2^2 \mathcal{F}$ exists and is continuous, then the curve is of class $C^1$.  
\label{crandallrabinowitz}
\end{theorem}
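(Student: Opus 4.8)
The plan is to prove this classical bifurcation theorem by a Lyapunov--Schmidt reduction combined with a rescaling that desingularizes the trivial branch, after which the whole statement reduces to one application of the implicit function theorem. Write $L_0 := D_2\mathcal{F}(\lambda_*, 0)$ and $L(\lambda) := D_2\mathcal{F}(\lambda, 0)$. Since $L_0$ is Fredholm of index $0$ with one-dimensional kernel $\mathcal{N}(L_0) = \textrm{span}\{w_*\}$, its range $\mathcal{R}(L_0)$ is closed of codimension one; fix a bounded projection $P : Y \to Y$ with $\mathcal{R}(P)$ one-dimensional and $\mathcal{R}(I-P) = \mathcal{R}(L_0)$, and a closed complement $Z$ of $\mathcal{N}(L_0)$ in $X$, so that $L_0|_Z : Z \to \mathcal{R}(L_0)$ is an isomorphism. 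A nontrivial solution of $\mathcal{F}(\lambda, w) = 0$ near $(\lambda_*, 0)$ has, for the associated small $s \ne 0$, a nonvanishing $\mathcal{N}(L_0)$-component, so it may be written uniquely as $w = s(w_* + \phi)$ with $s \in \mathbb{R}$ small and $\phi \in Z$ small. Using $\mathcal{F}(\lambda, 0) = 0$ and the fundamental theorem of calculus,
\[ \mathcal{F}(\lambda, s(w_* + \phi)) = s\, G(\lambda, s, \phi), \qquad G(\lambda, s, \phi) := \int_0^1 D_2\mathcal{F}(\lambda, ts(w_*+\phi))(w_*+\phi)\, dt, \]
where $G : I \times \mathbb{R} \times Z \to Y$ is continuous (because $D_2\mathcal{F}$ is) and $G(\lambda, 0, \phi) = L(\lambda)(w_* + \phi)$. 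Hence nontrivial solutions of $\mathcal{F} = 0$ near $(\lambda_*, 0)$ correspond exactly to solutions $(\lambda, s, \phi)$ of $G(\lambda, s, \phi) = 0$ near $(\lambda_*, 0, 0)$, with $G(\lambda_*, 0, 0) = L_0 w_* = 0$.

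Next I would show that $D_{(\lambda,\phi)}G(\lambda_*, 0, 0)$ is an isomorphism of $\mathbb{R} \times Z$ onto $Y$; this is the step into which hypotheses (ii) and (iv) feed. Along $\{s = 0\}$ the partials of $G$ in $\lambda$ and in $\phi$ exist by hypothesis (ii): from $G(\lambda, 0, \phi) = L(\lambda)(w_* + \phi)$ one reads off $\partial_\phi G(\lambda_*, 0, 0)\dot\phi = L_0 \dot\phi$ and $\partial_\lambda G(\lambda_*, 0, 0)\dot\lambda = \dot\lambda\, D_1 D_2\mathcal{F}(\lambda_*, 0)w_*$, so
\[ D_{(\lambda,\phi)}G(\lambda_*, 0, 0)(\dot\lambda, \dot\phi) = \dot\lambda\, D_1 D_2\mathcal{F}(\lambda_*, 0)w_* + L_0\dot\phi. \]
By the transversality hypothesis (iv), $P\,D_1 D_2\mathcal{F}(\lambda_*, 0)w_* \ne 0$ and hence spans the one-dimensional space $\mathcal{R}(P)$; writing $D_1 D_2\mathcal{F}(\lambda_*,0)w_* = r_0 + P\,D_1D_2\mathcal{F}(\lambda_*,0)w_*$ with $r_0 \in \mathcal{R}(L_0)$ and using that $L_0|_Z$ is an isomorphism onto $\mathcal{R}(L_0)$, one sees that for every $y = y_R + y_P \in \mathcal{R}(L_0)\oplus\mathcal{R}(P)$ there is a unique $(\dot\lambda,\dot\phi)$ with $\dot\lambda\, P\,D_1D_2\mathcal{F}(\lambda_*,0)w_* = y_P$ and $L_0\dot\phi = y_R - \dot\lambda r_0$. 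So the operator is a bounded bijection, hence an isomorphism by the open mapping theorem.

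With that isomorphism in hand, the implicit function theorem, applied to $G(\lambda, s, \phi) = 0$ in the variables $(\lambda, \phi)$, yields $\epsilon > 0$ and a map $s \mapsto (\lambda(s), \phi(s))$, $|s| < \epsilon$, with $(\lambda(0), \phi(0)) = (\lambda_*, 0)$ and $G(\lambda(s), s, \phi(s)) \equiv 0$, and which is the unique such family near $(\lambda_*, 0, 0)$. Setting $w(s) := s(w_* + \phi(s))$ gives the bifurcation curve through the trivial solution at $s = 0$; continuity of $\phi$ with $\phi(0) = 0$ gives the asymptotics $w(s) = s w_* + o(s)$ in $X$, and $w(s) \ne 0$ for $0 < |s| < \epsilon$ after shrinking $\epsilon$. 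Local uniqueness follows by reversing the reduction: any nontrivial zero $(\lambda, w)$ of $\mathcal{F}$ in a neighborhood of $(\lambda_*, 0)$ is of the form $w = s(w_* + \phi)$ and hence solves $G = 0$ near $(\lambda_*, 0, 0)$, so it lies on the constructed curve.

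The one genuinely delicate point, and the main obstacle, is the regularity of $G$. Under hypotheses (i)--(iv) alone, $G$ is continuous and $\partial_\lambda G$ exists and is continuous, but $\partial_\phi G$ and $\partial_s G$ are controlled only along $\{s = 0\}$, so the $C^1$ implicit function theorem does not apply verbatim; I would instead split $G = (I-P)G + PG$, solve $(I-P)G(\lambda, s, \phi) = 0$ for $\phi = \phi(\lambda, s)$ by a contraction mapping uniform in $(\lambda, s)$ near $(\lambda_*, 0)$ --- legitimate because $(I-P)L_0|_Z = L_0|_Z$ is invertible --- obtaining a continuous $\phi$ with $\phi(\lambda_*, 0) = 0$, and then solve the scalar equation $PG(\lambda, s, \phi(\lambda, s)) = 0$ for $\lambda = \lambda(s)$, the only nondegeneracy needed being that the $\lambda$-derivative of this scalar map at $(\lambda_*, 0)$ is nonzero --- which is exactly the transversality computation of the second paragraph. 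This produces the continuous branch. When in addition $D_2^2\mathcal{F}$ is continuous, one may differentiate under the integral sign in the definition of $G$, so $G$ is of class $C^1$ near $(\lambda_*, 0, 0)$, the genuine implicit function theorem applies, and the branch $s \mapsto (\lambda(s), \phi(s))$, and therefore $s \mapsto w(s)$, is $C^1$. The Lyapunov--Schmidt bookkeeping and the two short derivative computations are routine; all the content of the theorem is carried by hypothesis (iv), which is precisely what makes $D_{(\lambda,\phi)}G(\lambda_*, 0, 0)$ invertible.
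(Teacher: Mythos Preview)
The paper does not include a proof of this theorem: it is stated in the appendix as a classical result, cited to Crandall--Rabinowitz \cite{crandall1971bifurcation}, with the remark that it ``is a special case of the more general Lyapunov--Schmidt reduction procedure, a good discussion of which can be found in \cite{chow1982bifurcation}.'' Your proposal is a correct and standard execution of exactly that Lyapunov--Schmidt reduction, including the usual rescaling $w = s(w_* + \phi)$ that desingularizes the trivial branch and reduces the problem to an implicit function theorem application for $G$; you also correctly flag the regularity subtlety (under the minimal hypotheses $G$ need not be $C^1$ in all variables, so one splits into the range equation solved by contraction and a scalar bifurcation equation) and how the additional hypothesis $D_2^2\mathcal{F} \in C^0$ upgrades the branch to $C^1$.

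One small point worth tightening in your write-up: the assertion that every nontrivial solution near $(\lambda_*,0)$ has nonzero $\mathcal{N}(L_0)$-component is not automatic from the parametrization $w = s(w_*+\phi)$; it follows from the auxiliary Lyapunov--Schmidt step, since solving $(I-P)\mathcal{F}(\lambda, sw_* + z) = 0$ for $z = z(\lambda,s)$ gives $z(\lambda,0)=0$ by uniqueness, so $s=0$ forces $w=0$. Also note that the paper's statement contains what appears to be a typo, writing $(\lambda(0),w(0)) = (\lambda_*, w_*)$ where $(\lambda_*, 0)$ is meant; your construction correctly yields $w(0)=0$.
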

The reader should view this as a special case of the more general Lyapunov--Schmidt reduction procedure, a good discussion of which can be found in \cite{chow1982bifurcation}.  

The second theorem is on the Fredholm solvability and regularity of solutions to linear elliptic equations with jump conditions across an interface.  A classical reference for this is \cite{ladyzhenskaya1968linear}.  We quote here a parsed version of Theorem 16.1 from that book, incorporating the discussion preceding and following the theorem statement and simplifying the hypotheses to better match our needs.  

Let $\Omega = \Omega^{(1)} \cup \Omega^{(2)}$ be a domain in $\mathbb{R}^2$ with smooth boundary of class $C^{2+\alpha}$, where $\Omega^{(1)}$ and $\Omega^{(2)}$ are the connected components of $\Omega$ and their shared boundary $\mathcal{I} := \partial \Omega^{(1)} \cap \partial\Omega^{(2)}$ is a simple curve (which will also be of class $C^{2+\alpha}$).  Consider the elliptic problem 
\be \label{appendix:ellipticproblem} \left\{ \begin{array}{ll} \partial_{x_i} ( a_{ij} \partial_{x_j} u) + b_i \partial_{x_i} u + c u = f & \textrm{in $\Omega$} \\ 
\jump{u} = 0 & \textrm{on $\mathcal{I}$} \\
\jump{d \partial_\nu u} + \sigma u = g  & \textrm{on $\mathcal{I}$} \\
u = h & \textrm{on $\partial\Omega \setminus \mathcal{I}$}.  \end{array} \right.\ee
Here we are using summation conventions, $\partial_\nu$ denotes the conormal derivative with respect to $a_{ij}$, $\sigma$ is a real constant,  and the coefficients and forcing terms are assumed to have the following regularity:  
\be \label{appendix:regularityassumptions} 
\begin{array}{l} a_{ij} \textrm{ uniformly elliptic},~d \geq d_0 > 0, \\
b_i,~c,~f \in C^{\alpha}(\overline{\Omega^{(1)}}) \cap C^{\alpha}(\overline{\Omega^{(2)}}), \\
a_{ij}, ~d, ~g\in C^{1+\alpha}(\overline{\Omega^{(1)}}) \cap C^{1+\alpha} (\overline{\Omega^{(2)}}), \\
 h \in C^{2+\alpha}(\overline{\Omega^{(1)}}) \cap C^{2+\alpha} (\overline{\Omega^{(2)}}). \end{array}
\ee

\begin{theorem} \emph{(Ladyzhenskaya--Ural'tseva \cite[Theorem 16.1]{ladyzhenskaya1968linear})}
\label{ellipticregtheorem}  Consider the elliptic problem \eqref{appendix:ellipticproblem} with the assumptions listed in \eqref{appendix:regularityassumptions}.  Then the following statements hold.
\begin{romannum}
\item \emph{(Fredholm solvability)} The existence of a classical solution
\[ u \in C^{2+\alpha}(\overline{\Omega} \setminus \mathcal{I}) \cap C^{1+\alpha}(\overline{\Omega^{(1)}}) \cap C^{1+\alpha}(\overline{\Omega^{(2)}})\]
is implied by the uniqueness of solutions.     
\item \emph{(Elliptic regularity)} Note that, due to the fact that the $\jump{u} = 0$, a classical solution can be extended to $\overline{\Omega}$ as a continuous function.  In fact, if a classical solution $u$ exists, then its extension to $\overline{\Omega}$ is of class  $C^\alpha{(\overline{\Omega})}$.  \end{romannum}
\end{theorem}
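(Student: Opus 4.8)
\emph{Proof proposal.} The plan is to treat \eqref{appendix:ellipticproblem} as a two-phase (transmission) elliptic problem and to establish the standard Schauder-type a priori estimate for it; once that estimate is available, both the Fredholm alternative in (i) and the global H\"older regularity in (ii) follow by routine arguments. First I would localize: cover $\overline{\Omega}$ by finitely many coordinate balls of three kinds --- those contained in one of the open components $\Omega^{(i)}$, those centered on $\partial\Omega\setminus\mathcal{I}$, and those centered on the interface $\mathcal{I}$ --- and pass to model problems via a subordinate partition of unity. In an interface chart I would straighten $\mathcal{I}$ to a segment of a hyperplane by a $C^{2+\alpha}$ change of variables; this preserves the form of \eqref{appendix:ellipticproblem} and the regularity hypotheses \eqref{appendix:regularityassumptions}, turning them into a problem with variable but still $C^{\alpha}$/$C^{1+\alpha}$ coefficients on two adjacent half-balls $B^{+}$, $B^{-}$ meeting along $\{x_2=0\}$.

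The heart of the argument is the interface estimate. Freezing the coefficients at a point of $\mathcal{I}$ leaves a constant-coefficient two-phase problem on $\{x_2>0\}\cup\{x_2<0\}$ subject to $\jump{u}=0$ and $\jump{d\,\partial_{x_2}u}=g$ on $\{x_2=0\}$. These transmission conditions satisfy the Lopatinskii--Shapiro (complementing) condition, so the classical potential-theoretic / Fourier-multiplier analysis of elliptic systems yields a bound of the form
\[
\|u\|_{C^{2+\alpha}(\overline{B^{+}})}+\|u\|_{C^{2+\alpha}(\overline{B^{-}})}\le C\big(\|f\|_{C^{\alpha}}+\|g\|_{C^{1+\alpha}}+\|u\|_{C^{0}}\big).
\]
Unfreezing the coefficients by the usual small-perturbation-plus-interpolation device, combining with the interior Schauder estimate inside each $\Omega^{(i)}$ and the Dirichlet boundary Schauder estimate near $\partial\Omega\setminus\mathcal{I}$, and summing over the cover, I obtain the global a priori estimate
\[
\|u\|_{C^{2+\alpha}(\overline{\Omega}\setminus\mathcal{I})}+\sum_{i=1}^{2}\|u\|_{C^{1+\alpha}(\overline{\Omega^{(i)}})}\le C\big(\|f\|_{C^{\alpha}}+\|g\|_{C^{1+\alpha}}+\|h\|_{C^{2+\alpha}}+\|u\|_{C^{0}}\big).
\]

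To prove (i), I would run the method of continuity along a path in coefficient space joining \eqref{appendix:ellipticproblem} to the Laplacian with the standard transmission conditions ($d\equiv 1$, $\sigma=0$, $b_i=c=0$), for which solvability is classical. The a priori estimate above holds uniformly along the path (the lower-order terms $b_i\partial_i u+cu$ and $\sigma u$ are absorbed using the compact embedding $C^{2+\alpha}\hookrightarrow C^{0}$), so the solution operator is semi-Fredholm with closed range; pairing with the analogous estimate for the formal adjoint transmission problem gives Fredholm index $0$. Consequently uniqueness of solutions forces surjectivity, which is exactly assertion (i). For (ii), the one-sided traces of a classical solution $u$ on $\mathcal{I}$ coincide because $\jump{u}=0$, so $u$ extends continuously to $\overline{\Omega}$; to upgrade continuity to $C^{\alpha}(\overline{\Omega})$ I would use that $u\in C^{\alpha}(\overline{\Omega^{(i)}})$ for each $i$ together with an elementary gluing fact: for a bounded domain with $C^{2+\alpha}$ interface, any $x\in\overline{\Omega^{(1)}}$ and $y\in\overline{\Omega^{(2)}}$ with $|x-y|$ small can be joined inside $\overline{\Omega}$ by a path meeting $\mathcal{I}$ at a point $z$ with $|x-z|+|z-y|\le C|x-y|$, so $|u(x)-u(y)|\le|u(x)-u(z)|+|u(z)-u(y)|\le C'|x-y|^{\alpha}$, while for $|x-y|$ bounded below the H\"older bound is trivial from $\|u\|_{C^{0}}$.

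The main obstacle is the interface Schauder estimate in the second paragraph: verifying the complementing condition for the pair $\{\jump{u}=0,\ \jump{d\,\partial_{\nu}u}=g\}$ and extracting the \emph{sharp} $C^{2+\alpha}$-up-to-the-interface bound (not merely an interior-type bound) is the technically delicate point, after which the Fredholm theory and the gluing lemma are bookkeeping. Of course, since the statement is precisely \cite[Theorem 16.1]{ladyzhenskaya1968linear}, one may instead simply invoke that reference; the sketch above indicates the route a self-contained proof would follow.
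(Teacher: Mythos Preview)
The paper does not prove this statement at all: Theorem~\ref{ellipticregtheorem} is quoted in the appendix as a known result from \cite[Theorem~16.1]{ladyzhenskaya1968linear} and is stated without proof, exactly as you yourself note in your final sentence. Your sketch therefore goes well beyond what the paper does, and the outline you give --- localize, straighten the interface, freeze coefficients, verify the complementing condition for the transmission pair $\{\jump{u}=0,\ \jump{d\,\partial_\nu u}+\sigma u=g\}$, derive the interface Schauder estimate, then run the method of continuity and Fredholm theory --- is indeed the standard route to such results and is consistent with the argument in Ladyzhenskaya--Ural'tseva. For the purposes of this paper, simply citing the reference (as you suggest) is all that is required.
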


\bibliographystyle{siam}
\bibliography{steadywaves}

\end{document}